\newtheorem{thm}{Theorem} %[section]
\newtheorem{lem}[thm]{Lemma}%[section]
\newtheorem{cor}[thm]{Corollary}%[section]
\newtheorem{prop}[thm]{Proposition}%[section]
\newtheorem{res}[thm]{Result}%[section]
\theoremstyle{definition}
\newtheorem{defn}{Definition}
\newtheorem{rem}[thm]{Remark}%[section]
\newcommand{\Z}{{\mathbb Z}}
\theoremstyle{remark}
\begin{document}

\title{Nonexistence of Strong External Difference Families in Abelian Groups of Order Being Product of At Most Three Primes}
\author{Ka Hin Leung \footnote{Research is supported by grant
R-146-000-158-112, Ministry of Education, Singapore} \\ Department of Mathematics\\
National University of Singapore\\ Kent Ridge, Singapore 119260\\
Republic of Singapore \\[1cm]
Shuxing Li \footnote{Research is supported by the Alexander von Humboldt Foundation} \\
Faculty of Mathematics\\ Otto von Guericke University Magdeburg, 39106  \\
Magdeburg, Germany \\[1cm]
Theo Fanuela Prabowo \\ Department of Mathematics\\
National University of Singapore\\ Kent Ridge, Singapore 119260\\
Republic of Singapore }

\date{}
\maketitle

\begin{abstract}
Let $v$ be a product of at most three not necessarily distinct primes. We prove that there exists no strong external difference family with more than two subsets in an abelian group $G$ of order $v$, except possibly when $G=C_p^3$ and $p$ is a prime greater than $3 \times 10^{12}$.
\end{abstract}
%\newpage
%\tableofcontents
%\newpage

\section{Introduction}

As an emerging combinatorial configuration proposed in \cite{PS}, strong external difference family (SEDF) has been under intensive study recently \cite{BJWZ,HP,HJN,JL,MS,PS,WYFF}. Roughly speaking, an SEDF is a collection of disjoint subsets of the same size in a group, so that the differences generated from these subsets cover each non-identity element of the group the same number of times. The study of SEDFs is motivated by the so called algebraic manipulation detection (AMD) codes \cite{CDF+}, which can be regarded as a variation of classical authentication codes. Moreover, further cryptographic applications of AMD codes have been discovered later \cite{CFP,CPX}. In \cite{PS}, Paterson and Stinson first described the underlying combinatorial structure behind AMD codes. They proposed the concept of SEDFs, which is equivalent to certain optimal AMD codes. The elegance of this concept can be appreciated from its natural connection to a well-studied configuration named external difference family \cite{CD}, which has applications in synchronization codes \cite{Lev}, as well as authentication codes and secret sharing schemes \cite{OKSS}.

An external difference family may be viewed as some kind of generalization of difference sets. As in the theory of difference sets, we are interested in the construction and nonexistence problems of external difference family.
Character theory and number theory are standard tools in the study of such problems in difference sets. Recently, researchers have successfully employed those techniques in a series of papers \cite{BJWZ,JL,MS,WYFF} to study external difference families that satisfy certain stronger regularity condition.

Let $G$ be a group of order $v$, whose operation is written multiplicatively. Let $D_1, D_2, \cdots, D_m$ be mutually disjoint $k$-subsets of $G$, where $m \geq 2$, and let $\lambda$ be a positive integer. Then $\{D_1, D_2, \cdots, D_m\}$ is a $(v, m, k, \lambda)$-\emph{external difference family} in $G$ if for each nonidentity element $g\in G$, 
$$
\sum_{1\leq i\neq j \leq m} |\{ (x,y) : xy^{-1}=g,  x \in D_j, y\in D_i\}|=\lambda.
$$
Moreover, the collection of subsets $\{D_1, D_2, \cdots, D_m\}$ is a $(v, m, k, \lambda)$-\emph{strong external difference family} (SEDF) in $G$ if for each $1 \leq j \leq m$ and nonidentity $g\in G$,
$$
|\{ (x,y) : xy^{-1}=g,  x \in D_j, y \in \bigcup_{j\neq i=1}^m D_i \}|=\lambda.
$$

Clearly, a $(v,m,k,\lambda)$-SEDF is necessarily a $(v,m,k,m\lambda)$-external difference family.
Note that in any group $G$ of order $v$, there always exists a $(v,v,1,1)$-SEDF by partitioning $G$ into $v$ disjoint subsets, each with one single element. From now on, whenever an SEDF is considered, we always mean \emph{nontrivial} SEDF, i.e., SEDF satisfying $k>1$.

In this paper, we only work on SEDFs over abelian groups, which have been the main focus in this area. For SEDFs in nonabelian groups, we refer to a recent paper \cite{HJN}. In addition, there have been various extensions of SEDFs studied in \cite{HP2,LNC,PS,SM,WYFF}.

As noted in \cite[p. 25]{JL}, there is a fundamental difference between SEDFs with $m=2$ and $m>2$. When $m=2$, there are some few known infinite families, see \cite[Section 4]{BJWZ}, \cite[Theorem 5.6]{HP} and \cite[Example 2.2]{PS}. Some SEDFs with $m=2$ have been constructed, see \cite[Section 3]{DHM} and \cite[Proposition 2.1]{D}, even before the terminology SEDF was adopted.

In contrast, there is only one known nontrivial example if $m>2$. More precisely, there is a $(243,11,22,20)$-SEDF in the group $C_3^5$, which was discovered by two groups of researchers independently and simultaneously \cite[Theorem 3.1]{JL}, \cite[Theorem 3.6]{WYFF}. In the following, we summarize some nonexistence results of SEDF with $m>2$.

\begin{prop}\label{prop-nonexistence}
There is no $(v,m,k,\lambda)$-SEDF in $G$ with $m>2$ if any of following conditions is satisfied:
\begin{itemize}
\item[(a)] $m \in \{3,4\}$ {\rm \cite[Theorems 3.3 and 3.6]{MS}}.
\item[(b)] $\lambda \in \{1,2\}$ {\rm \cite[Corollary 3.2]{HP}, \cite[Theorem 2.2]{PS}}.
\item[(c)] $\lambda \geq k$ {\rm \cite[Lemma 2.4]{HP}}.
\item[(d)] $\lambda>1$ and $\frac{\lambda(k-1)(m-2)}{(\lambda-1)k(m-1)}>1$ {\rm \cite[Theorem 4.1]{HP}}.
\item[(e)] $G=C_p$, where $p$ is a prime {\rm \cite[Theorem 3.9]{MS}}.
\item[(f)] $G=C_{p^2}$, where $p$ is a prime {\rm \cite[Theorem 3.7]{BJWZ}}.
\item[(g)] $k | v$ {\rm \cite[Lemma 1.7]{BJWZ}, \cite[Lemma 1.2]{MS}}
\item[(h)] $\gcd(k,v-1)=1$ {\rm \cite[Lemma 1.5]{JL}}.
\item[(i)] $v-1$ is square-free {\rm \cite[Proposition 2.7]{HJN}}.
\item[(j)] $v$ is a product of distinct primes and $\gcd(mk,v)=1$ {\rm \cite[Corollary 3.6]{BJWZ}}.
\end{itemize}
\end{prop}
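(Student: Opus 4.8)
Since Proposition~\ref{prop-nonexistence} compiles results drawn from several sources, my plan is to recover all ten items from a single framework and then isolate which of them demand genuine number theory rather than bookkeeping. The common starting point is the group ring $\Z[G]$: identifying each $D_i$ with the formal sum $\sum_{x\in D_i}x$ and writing $E_j=\sum_{i\neq j}D_i$, the defining property of an SEDF together with the disjointness of the $D_i$ (which forces the identity coefficient to vanish) is the single identity $D_j E_j^{(-1)}=\lambda(G-1)$ for every $j$, where $G=\sum_{g\in G}g$. Reading off the total coefficient sum yields the fundamental numerical relation
$$(m-1)k^2=\lambda(v-1),$$
while applying any nontrivial character $\chi$ gives $\chi(D_j)\,\overline{\chi(E_j)}=-\lambda$, hence $\chi(D_j)\,\overline{(\chi(S)-\chi(D_j))}=-\lambda$ with $S=\sum_i D_i$. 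The elementary bound $\lambda<k$, which is exactly the contrapositive of item~(c), will be used repeatedly below.

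The divisibility items (g), (h), (i) I would dispatch purely from $(m-1)k^2=\lambda(v-1)$ together with $\lambda<k$. If $\gcd(k,v-1)=1$ then $k^2\mid\lambda$, forcing $\lambda\geq k^2>k$, a contradiction; this settles (h), and (g) reduces to it because $k\mid v$ gives $\gcd(k,v-1)=1$. For (i), if $v-1$ is square-free then for each prime $p\mid k$ at most one factor of $p$ can come from $v-1$, so $\ord_p(\lambda)\geq 2\ord_p(k)-1\geq \ord_p(k)$; thus $k\mid\lambda$, again contradicting $\lambda<k$. Items (b) and (d) are index restrictions that I expect to follow from playing the two character equations above against the counting relation, extracting an inequality or a forced divisibility that the small values of $\lambda$ cannot meet.

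The remaining items are the substantive ones and require genuine character-theoretic input. For the group-specific cases (e) $G=C_p$ and (f) $G=C_{p^2}$, and for (a) with $m\in\{3,4\}$, the strategy is to treat the $\chi(D_j)$ as algebraic integers in the cyclotomic field $\Q(\zeta_v)$ and exploit the rigidity of the system $\chi(D_j)\,\overline{(\chi(S)-\chi(D_j))}=-\lambda$ across all characters: summing over $j$ relates $\sum_j|\chi(D_j)|^2$ to $|\chi(S)|^2+m\lambda$, and comparing with Galois conjugates together with norm and trace constraints in $\Z[\zeta_v]$ restricts the admissible parameters. Item~(j) layers the coprimality hypothesis $\gcd(mk,v)=1$ onto this, which should permit a multiplier- or self-conjugacy-type argument forcing a contradiction when $v$ is a product of distinct primes.

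The main obstacle I anticipate is item~(f), the cyclic case $C_{p^2}$, together with the fine casework behind (a): here the character values live in a field with nontrivial subfield structure, vanishing sums of roots of unity must be controlled, and the purely arithmetic inequalities from the counting relation no longer suffice, so one is pushed into delicate estimates on character magnitudes and into ruling out sporadic parameter sets individually. By contrast, (c), (g), (h), (i) are essentially immediate from the counting identity, and (b), (d), (e), (j) sit in between.
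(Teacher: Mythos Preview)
The paper does not supply its own proof of Proposition~\ref{prop-nonexistence}; it is presented as a compilation of results proved in the cited references, accompanied only by the remark that parts (g), (h), (i) are easy consequences of (c) together with the counting identity $(m-1)k^2=\lambda(v-1)$, and that (j) follows easily from (g). Your derivations of (g), (h), and (i) from $\lambda<k$ plus the counting identity match that remark exactly, so on those items your plan and the paper's treatment coincide.

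Two points are worth flagging. First, you never actually prove (c): you invoke $\lambda<k$ as ``the contrapositive of item~(c)'' and then use it as a black box, but if your stated goal is to recover all ten items you still owe an argument. It is short --- the coefficient of any $g\neq 1_G$ in $D_jE_j^{(-1)}$ is $|D_j\cap gE_j|\leq |D_j|=k$, and equality for \emph{all} $g\neq 1_G$ forces $D_j\subseteq gE_j$, which fails for $g=d_1d_2^{-1}$ with $d_1,d_2\in D_j$ distinct --- but it should be stated. Second, for (j) you propose a multiplier or self-conjugacy argument, whereas the paper asserts that (j) follows easily from (g); you may want to look for that shorter reduction before committing to the heavier character-theoretic route.

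For (a), (b), (d), (e), (f) the paper offers nothing beyond the citations, so there is no in-paper proof to compare your sketches against; those items genuinely require the original sources or independent arguments along the lines you outline.
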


Note that Part (g), (h), (i) above are easy consequences of Part (c) and the basic equation \eqref{EqBasic}. Part (j) follows easily from Part (g). Besides those restrictions on the parameters, there is a nonexistence result that concerns with the exponent of the group $G$ \cite[Section 5]{JL}.

So far, we only have one nontrivial example when $m>2$. 
According to \cite[Remark 5.17]{JL}, there is no $(v,m,k,\lambda)$-SEDF with $v \leq 10^5$ and $m \in \{5,6\}$. In addition, except the aforementioned example in $C_3^5$, there are only $70$ plausible parameter sets of $(v,m,k,\lambda)$-SEDF with $v \leq 10^4$ and $m>2$. All these results indicate that nontrivial SEDFs with $m>2$ are very rare.
In this paper, we will further strengthen this belief. Our main results are as follows:

\begin{thm}\label{ThmCyclic}
There exists no SEDF with $m>2$ in any cyclic group of prime power order.
\end{thm}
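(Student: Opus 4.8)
The plan is to attack the statement with the standard character-theoretic machinery in the group ring $\Z[G]$, pushing the resulting constraints on character values until they become incompatible with the arithmetic of $\Z[\zeta_{p^n}]$.

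\textbf{Setup and the basic relation.} Write $G=C_{p^n}$, and for $A\subseteq G$ let $A^{(-1)}=\sum_{a\in A}a^{-1}$; set $D=\sum_{i=1}^m D_i\in\Z[G]$. The SEDF condition is exactly $D_j(D-D_j)^{(-1)}=\lambda(G-1)$ in $\Z[G]$, where $G=\sum_{g\in G}g$ denotes the group sum and $1$ the identity. Applying any nontrivial character $\chi$ (so $\chi(G)=0$) yields, for every $j$,
$$\chi(D_j)\overline{\chi(D)}=|\chi(D_j)|^2-\lambda,$$
and taking absolute squares gives the universal relation $|\chi(D_j)|^2\,|\chi(D)|^2=(|\chi(D_j)|^2-\lambda)^2$. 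First I would run the elementary two-case analysis of this relation. When $\chi(D)=0$ one gets $|\chi(D_j)|^2=\lambda$ for all $j$. When $\chi(D)\neq0$, the right-hand side of the displayed relation is real, forcing each $\chi(D_j)$ to be a real multiple of $\chi(D)$; feeding this back and using $\sum_j\chi(D_j)=\chi(D)$ together with the counting identity $\sum_j|\chi(D_j)|^2=|\chi(D)|^2+m\lambda$ shows that there is an integer $r=r(\chi)\in\{2,\dots,m-2\}$ with, writing $e=|m-2r|$,
$$|\chi(D)|^2=\frac{4\lambda e^2}{(m-2)^2-e^2},\qquad |\chi(D_j)|^2\in\Big\{\lambda\tfrac{m-2+e}{m-2-e},\ \lambda\tfrac{m-2-e}{m-2+e}\Big\}.$$
In particular every $|\chi(D)|^2$ and every $|\chi(D_j)|^2$ is rational; being also an algebraic integer, each lies in $\Z$.

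\textbf{Rationality forces subgroup structure.} The crucial consequence is that $\chi(DD^{(-1)})=|\chi(D)|^2\in\Z$ and $\chi(D_jD_j^{(-1)})=|\chi(D_j)|^2\in\Z$ for every character $\chi$. Since $\mathrm{Gal}(\mathbb{Q}(\zeta_{p^n})/\mathbb{Q})\cong(\Z/p^n)^\times$ acts on characters by $\chi\mapsto\chi^c$, an element of $\Z[G]$ whose every character value is rational is fixed by all multiplier maps $g\mapsto g^c$, hence is an integer combination of the subgroup sums $H_0\subset H_1\subset\cdots\subset H_n=G$ in the unique subgroup chain of $C_{p^n}$. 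Thus I can write $DD^{(-1)}=\sum_{t=0}^n\delta_t H_t$, and similarly for each $D_jD_j^{(-1)}$. Evaluating at a character of order $p^s$ (whose kernel is $H_{n-s}$) then shows that $|\chi(D)|^2$ depends only on $s$: call it $E_s$, and likewise $|\chi(D_j)|^2=F_s^{(j)}$. Consequently each $E_s$ and $F_s^{(j)}$ takes one of the finitely many admissible values above, the $\delta_t$ are governed by the nonnegative intersection numbers $c_t=\#\{(x,y)\in D\times D:\ xy^{-1}\text{ has order }p^t\}$, and Parseval supplies the global identities $\sum_{s=1}^n\phi(p^s)E_s=mk(p^n-mk)$ and $\sum_{s=1}^n\phi(p^s)F_s^{(j)}=k(p^n-k)$, all consistent with the basic equation \eqref{EqBasic}.

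\textbf{The endgame and the main obstacle.} At this stage the problem is reduced to pure arithmetic: a finite menu of candidate values for each $E_s$ and $F_s^{(j)}$, tied together by nonnegativity of the $c_t$, by Parseval, and by congruences modulo $p$. The last ingredient I would exploit is reduction at the unique ramified prime $\mathfrak p=(1-\zeta_{p^n})$ above $p$: since every group element is $\equiv 1\pmod{\mathfrak p}$, one has $\chi(D_j)\equiv k$ and $\chi(D)\equiv mk\pmod{\mathfrak p}$, whence $F_s^{(j)}\equiv k^2$ and $E_s\equiv(mk)^2\pmod p$ for all $s$; combined with $\lambda\equiv(1-m)k^2\pmod p$ coming from \eqref{EqBasic}, the explicit formulas above collapse to divisibility conditions such as $(m-2)(e+m)\equiv0\pmod p$. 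The main obstacle is precisely to show that these constraints cannot all hold simultaneously for $m>2$: one must eliminate every admissible pattern of $(m,e,s)$, control how $E_s$ (hence the splitting parameter $r(s)$) may vary with $s$ along the subgroup chain, and rule out the borderline cases where $p$ divides one of the small quantities $m-2$ or $e+m$. I expect this elimination to be the technical heart of the proof and to require a careful case analysis organised by the order $p^s$ of $\chi$ and by whether $E_s=0$, with the already-established cases $G=C_p$ and $G=C_{p^2}$ of Proposition \ref{prop-nonexistence}(e),(f) anchoring an induction on $n$.
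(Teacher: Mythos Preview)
Your setup is sound: the rationality of $|\chi(D_j)|^2$ and $|\chi(D)|^2$, the decomposition of $D_jD_j^{(-1)}$ along the subgroup chain, and the congruences modulo $\mathfrak p=(1-\zeta_{p^n})$ are all correct and are indeed part of the toolkit. But the proposal is not a proof: you explicitly leave the ``technical heart'' --- the elimination of all admissible patterns $(m,e,s)$ --- undone, and you give no mechanism for carrying it out beyond ``careful case analysis'' and a hoped-for induction on $n$. There is no indication that your constraints (Parseval, nonnegativity of the $c_t$, and the congruence $(m-2)(e+m)\equiv 0\pmod p$) are by themselves strong enough to force a contradiction; in particular, when $p\mid(m-2)$ your modular obstruction evaporates, and nothing in your framework prevents this.

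The paper's proof succeeds by injecting two structural facts that your generic arithmetic framework omits. First, it uses that the $D_i$ are disjoint \emph{subsets} (coefficients $0$ or $1$), not merely group-ring elements: if the faithful character $\chi_\alpha$ had $\chi_\alpha(D_i)=\chi_\alpha(D_j)$ for $i\neq j$ (which must happen since $m>2$), then $D_i-D_j$ would be a multiple of the order-$p$ subgroup $P$, forcing $D_i$ to be a union of $P$-cosets and hence $\chi_\alpha(D_i)=0$ --- so $\chi_\alpha\in G_0^*$. Second, to exclude $p\mid k$ it invokes Ma's Lemma: $p^2\mid\lambda=|\chi_\alpha(D_i)|^2$ would give $p\mid\chi_\alpha(D_i)$, whence $D_i=pX_0+PX_1$, again impossible for a $\{0,1\}$-valued element unless $D_i$ is a union of $P$-cosets. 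These two steps pin down $p=2$, after which short counting arguments (using Lemmas~\ref{LemDivChar1}, \ref{LemCharPrime}(d), and \ref{LemPower}) finish. Neither of these set-theoretic/lifting arguments is visible in your outline, and without them the case analysis you defer has no reason to terminate.
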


\begin{thm}\label{ThmMain}
Let $v$ be a product of at most three not necessarily distinct primes. Then there exists no $(v,m,k,\lambda)$-SEDF in $G$ with $m>2$, except possibly when $G=C_p^3$ and $p$ is a prime greater than $3 \times 10^{12}$.
\end{thm}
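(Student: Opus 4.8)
The plan is to translate the SEDF condition into the group ring $\Z[G]$ and to exploit it through the characters of $G$. Writing $D_j$ also for the group-ring element $\sum_{x\in D_j}x$ and setting $N=\sum_{j=1}^m D_j$, the defining property becomes $D_j(N-D_j)^{(-1)}=\lambda(G-1)$; applying the trivial character recovers the basic equation~\eqref{EqBasic}, namely $(m-1)k^2=\lambda(v-1)$, while every nontrivial character $\chi$ of $G$ yields
\[
|\chi(D_j)|^2-\chi(D_j)\overline{\chi(N)}=\lambda\qquad(1\le j\le m).
\]
Since $\lambda$ is real, each $\chi(D_j)\overline{\chi(N)}$ is real; summing over $j$ gives the clean identity $\sum_{j}|\chi(D_j)|^2=|\chi(N)|^2+m\lambda$, and when $\chi(N)=0$ one obtains the rigid condition $|\chi(D_j)|^2=\lambda$ for all $j$, whereas when $\chi(N)\ne 0$ each ratio $\chi(D_j)/\chi(N)$ is real and takes one of only two values. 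Read inside the cyclotomic ring $\Z[\zeta_n]$ with $n=\exp(G)$, these relations are the engine of every contradiction to follow.

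First I would prune the parameter space using Proposition~\ref{prop-nonexistence}: part~(a) forces $m\ge 5$, part~(b) forces $\lambda\ge 3$, part~(c) forces $\lambda<k$, and parts~(d) and~(g) supply the inequality $\lambda(k-1)(m-2)\le(\lambda-1)k(m-1)$ together with the restriction $k\nmid v$. Combined with $(m-1)k^2=\lambda(v-1)$, these confine $k$ and $\lambda$ to narrow ranges once $v$ and $m$ are fixed. Next I would enumerate the abelian groups whose order is a product of at most three primes: $C_p^2$; $C_{pq}$ and $C_{pqr}$; $C_{p^2q}$ and $C_{pq}\times C_p$; and $C_{p^2}\times C_p$ and $C_p^3$. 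The cyclic prime-power groups $C_p$, $C_{p^2}$, $C_{p^3}$ are disposed of at once by Theorem~\ref{ThmCyclic}.

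The core is a case analysis on the remaining types, exploiting that $G$ has at least two distinct prime divisors in every case except the elementary abelian one. Whenever a prime $\ell$ divides $\exp(G)$, the order-$\ell$ characters take values in $\Z[\zeta_\ell]$, and I would combine the per-character equation with (i) restriction of the $D_j$ to subgroups and projection to quotients, (ii) the $\chi(N)=0$ rigidity $|\chi(D_j)|^2=\lambda$ read through the Galois action on $\Z[\zeta_\ell]$, and (iii) self-conjugacy and field-descent bounds of the kind used for difference sets, controlling the absolute values $|\chi(D_j)|$. When two distinct primes are present these constraints interact across the two cyclotomic fields and, together with $(m-1)k^2=\lambda(v-1)$ and the inequalities above, leave no admissible $(m,k,\lambda)$; this eliminates $C_{pq}$, $C_{pqr}$, $C_{p^2q}$, $C_{pq}\times C_p$, and $C_{p^2}\times C_p$, and after a separate but still self-contained analysis over $\Z[\zeta_p]$ it also eliminates the order-$p^2$ group $C_p^2$.

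The hard part is $G=C_p^3$, where every nontrivial character has order $p$, all values lie in the single field $\Z[\zeta_p]$, and there is no second prime to play the cyclotomic fields against one another; this is exactly the elementary abelian regime in which genuine examples such as the $(243,11,22,20)$-SEDF in $C_3^5$ occur, so one cannot hope for an unconditional contradiction. Here the plan is to extract from $|\chi(D_j)|^2=\lambda$ and from the two-valued structure of $\chi(D_j)/\chi(N)$ an explicit Diophantine and divisibility condition in $p,m,k,\lambda$, bounded by substituting $v=p^3$ into the basic equation, and then to rule out all solutions by sharp number-theoretic estimates reinforced by direct computation. I expect this to succeed only up to the stated threshold: the estimates weaken as $p$ grows, so after eliminating every $p\le 3\times10^{12}$ the range $p>3\times10^{12}$ remains genuinely open, which is precisely the exception recorded in the statement.
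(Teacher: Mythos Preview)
Your high-level architecture matches the paper's: translate to $\Z[G]$, split nontrivial characters into the $\chi(N)=0$ and $\chi(N)\ne0$ classes, run a case analysis over the isomorphism types of $G$, and reserve the computer for $C_p^3$. The basic character identities you wrote are also correct. But the proposal is missing the specific mechanisms that actually drive the contradictions, and one of the tools you name is not the one that works here.

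The main gap is that you never isolate the workhorse lemma. In the $\chi(N)\ne0$ case one can do much more than observe that $\chi(D_j)/\chi(N)$ takes two values: picking $i,j$ with $\chi(D_i)\ne\chi(D_j)$ one shows $\chi(D-D_i-D_j)=0$. Reading this through a cyclic quotient of prime-power order forces $p\mid(m-2)k$ (and often $p^2\mid(m-2)k$), which is the origin of almost every divisibility used later. The same vanishing, combined with the counting of how many $D_j$ carry each of the two values, yields exact formulas $(\ell_\chi-1)(\lambda+a_\chi)=\lambda(m-2)$ and hence $(\lambda+a_\chi)\mid (m-2)\gcd(\lambda,a_\chi)$. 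These in turn feed a Fourier-inversion estimate on the identity coefficient of $\sum_i D_iD_i^{(-1)}$ that gives the decisive inequality $\lambda<k<2\lambda$. Without this chain---vanishing on $D-D_i-D_j$, the $(\lambda+a_\chi)$ divisibility, and the $k<2\lambda$ bound---the case analysis for $C_{pq}$, $C_{pqr}$, $C_{pq^2}$ cannot be closed by the general cyclotomic considerations you list; ``self-conjugacy and field-descent bounds'' are not what settles these cases.

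For the prime-power groups the missing idea is different: since every $\chi(D_iD_i^{(-1)})$ is rational, $D_iD_i^{(-1)}$ is a $\Z$-combination of Galois orbits, each of size divisible by $p-1$, which forces $(p-1)\mid k(k-1)$ and, when $G_0^*=\emptyset$, even $(p-1)\mid k$. This orbit constraint is what kills $C_p^2$ and $C_p\times C_{p^2}$ and, in $C_p^3$, pins the parameters down to the explicit Diophantine system that makes $\tfrac{p^2+p+1}{3}$ a perfect square with $3(m-1)(m-3)\mid(p-1)$; only then is a finite computer search meaningful. Your plan to ``extract an explicit Diophantine condition'' is right in spirit, but the proposal gives no indication of where that condition comes from, and the field-descent route you suggest will not produce it.
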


It turns out that we may view an SEDF as an element in the group ring $\Z[G]$ satisfying certain equation. 
One effective tool to study those equations is character theory. 
In Section \ref{Sec2}, we derive some basic results by applying character theory on SEDFs. For the reader's convenience, we also include some key results in character theory that we employ in later sections.
Using the results obtained in Section \ref{Sec2}, we prove a fundamental inequality between the two parameters $k$ and $\lambda$ in Section \ref{Sec3}. As a consequence, we show that no $(v,m,k,\lambda)$-SEDF exists if $\lambda$ is prime. 
In Section \ref{Sec4}, we study the conditions imposed on the parameters and character values by applying prime power characters. This allows us to prove Theorem \ref{ThmCyclic}.   
In Section 5, we first derive further restrictions by considering characters whose orders are product of two distinct primes instead. Consequently, we are able to deal with cases when the group order has two or three distinct prime divisors. 
In Section \ref{Sec7}, we go back to study the more complicated situation where the group is not cyclic and its order is a prime power. Even though we cannot prove nonexistence of SEDF in general , we are able to prove Theorem \ref{ThmMain}. Finally, we discuss how our main results can be applied to eliminate many unknown cases listed in 
\cite[Remark 5.17]{JL}.

\section{Preliminary results}\label{Sec2}

To define SEDF over an abelian group $G$, it is convenient
to define it over an equation in the group ring $\mathbb{Z}[G]$.
Every element $X\in \Z[G]$ can be written as $X=\sum_{g\in G} a_gg$ with $a_g\in \mathbb{Z}$.
The $a_g$'s are called the \textbf{coefficients} of $X$. For any 
subset $S$ of $G$, we denote the group ring element $\sum_{g\in S}g$ by $S$. 
For any integer $t$ coprime to $|G|$, $X=\sum_{g\in G} a_gg\in \mathbb{Z}[G]$, we define
$X^{(t)}=\sum_{g\in G} a_g g^t$.
Obviously, $\Z[G]$ is a commutative ring with
\[ (\sum_{g \in G} a_gg ) \cdot (\sum_{g \in G} b_gg )= \sum_{g \in G} (\sum_{h \in G}a_{gh^{-1}}b_h) g.\]

\begin{defn}
Suppose $m\geq 2$ and $|G|=v$.
Let $D_1, D_2, \cdots, D_m$ be mutually disjoint $k$-subsets of $G$ and $D=\sum_{i=1}^m D_i$.
$\{D_1, D_2, \cdots, D_m\}$ is called a $(v, m, k, \lambda)$-\emph{external difference family} in $G$ if
 $$
\sum_{\substack{1\le i,j \le m \\ i \ne j}} D_jD_i^{(-1)}=\lambda(G-1_G),
$$
and is called a $(v, m, k, \lambda)$-\emph{strong external difference family} (SEDF) in $G$ if
\begin{equation}\label{eqn-def}
D_j (D^{(-1)}- D_i^{(-1)})=\lambda(G-1_G) \quad \mbox{for each $1 \leq j \leq m$}.
\end{equation}
\end{defn}

As we have mentioned before, we will always assume $G$ is abelian and $m \geq 3$. Throughout this paper,
we assume  $\{D_1,D_2,\ldots, D_m\}$ is a $(v, m, k, \lambda)$-SEDF.

\medskip

To study Equation \eqref{eqn-def}, the standard tools are character theory and elementary number theory.
As we will see later, we often use some divisibility conditions to impose further restrictions on the parameters if $v$ is of certain form. Unfortunately, such arguments are often ad hoc and depend on $v$.

The group of complex characters of $G$ is denoted  by $G^*$.
The \textbf{principal character} of $G$ is the character  $ \chi_0$
with $ \chi_0(g)=1$ for all $g\in G$.
It is well known that $G^*$ is a group isomorphic to $G$,
with multiplication in $G^*$ defined by $ \chi\tau(g)= \chi(g)\tau(g)$
for $ \chi\tau\in G^*$, $g\in G$.
For $X=\sum_{g\in G}a_gg\in \mathbb{Z}[G]$ and $ \chi\in G^*$,
we write $ \chi(X)=\sum_{g\in G} a_g \chi(g)$. Naturally, $\chi$ induces a ring homomorphism from
$\Z[G]$ to $\mathbb{C}$.
 For a more detailed treatment of group rings and characters, please refer to \cite[Chapter VI, Section 3]{BJL}, \cite[Chapter 1]{Pott95} and \cite[Chapter 1]{Sch}

Applying $\chi_0$ on Equation (1), we get
\begin{equation}\label{EqBasic}
(m-1)k^2 = \lambda(v-1).\end{equation}
For $ \chi\neq \chi_0$, $\chi(G)=0$ and we obtain
\begin{equation}\label{EqCharSEDF}
\chi(D_j)(\overline{ \chi(D)} - \overline{ \chi(D_j)}) = - \lambda, \end{equation}
for any $1 \leq j \leq m$.
As shown in \cite{JL}, there are two types of nonprincipal characters with regard to a $(v,m,k,\lambda)$-SEDF.
We define
\begin{align*}
G_0^* &:= \{ \chi \in G^* | \  \chi(D) = 0 \}, \\
G_N^* &:= \{ \chi \in G^* |  \ \chi\neq \chi_0 \mbox{ and } \chi(D) \neq 0   \}.
\end{align*}
The group $G^*$ can be partitioned as disjoint union $G^* = \{ \chi_0\} \cup G_0^* \cup G_N^*$.
First, we record some known results from \cite[Equations (2.2),(2.3)]{JL}.

\begin{res}\label{ResG0}
If $ \chi \in G_0^*$, then $ \chi(D)=0$ and $| \chi(D_i)|^2=\lambda$ for each $1 \le i \le m$.
\end{res}

For convenience, we define the following:

\begin{defn}
For $ \chi \in G^*$, we define
\[ a_{ \chi}:= \min\{| \chi(D_i)|^2\ | 1 \leq i \leq m \} \mbox{ and } \ell_{ \chi}:=|\{ 1\leq i\leq m | \
|\chi(D_i)|^2:=a_{\chi} \}|.\]
We also define $a=\min\{ a_{ \chi} \mid  \chi\in G^*\}$.
\end{defn}

\medskip

We record some known results concerning $a_{\chi}$ and $\ell_{\chi}$.

\begin{res}\label{ResGn}
$G_N^* \neq \emptyset$. For any $ \chi \in G_N^*$, $a_{\chi}$ is an integer, $a_{\chi}<\lambda$;  and for $1\leq i\leq m$,
\[ | \chi(D_i)|^2= a_{ \chi} \mbox{ or }  \ \frac{\lambda^2}{a_{ \chi}}. \]
Furthermore,   the following holds: 
\begin{itemize}
\item [(a)] $a_{ \chi} | \lambda^2$.
\item [(b)] $a=a_{ \chi}$ for some $ \chi\in G_N^*$.
\item [(c)] $ m/2 < \ell_{ \chi}\leq m-2$.
\item [(d)] Let $d=\gcd(\lambda, a_{ \chi})$. Then $a_{ \chi}| d^2$. 
\end{itemize}
\end{res}
\begin{proof} The first statement  follows easily from \cite[Lemma 3.1(d)]{MS}. 
In view of \cite[Equation (5.3)]{JL} and the equation after that, we see that 
$|\chi(D_i)|^2=a_{\chi}$ or $\lambda^2/a_{\chi}$. 
As shown in the proof of \cite[Lemma 5.1]{JL}, both $a_{\chi}$ and $\lambda^2/a_{\chi}$ are integers. 
Part (a) is now clear. Since $G_N^*$ is nonempty and $a_\chi<\lambda$ for any $\chi\in G_N^*$, Part (b) follows. 
Part (c) is an easy consequence of \cite[Equation (5.2)]{JL}. 
To show (d), write 
\[ \lambda=\prod q_i^{t_i} \mbox{ and } a_{ \chi}=\prod q_i^{s_i} \]
where $q_i$'s are distinct primes. As $a_{ \chi}|\lambda^2$, we see that $2t_i\geq s_i$. Clearly,
$d=\prod q_i^{\min(t_i, s_i)}$. Therefore, $d^2=\prod q_i^{2\cdot \min(t_i,s_i)}$. Obviously,
$2\cdot \min(t_i, s_i)\geq s_i$ as $s_i\leq 2t_i$. Thus $a_{\chi}|d^2$. 
\end{proof}

%\begin{cor}\label{CorEven}
%If $v$ is even, then there exists a character $\chi \in G^*$ of order 2. Then $a_2$ is a square.
%\end{cor}
%
%\begin{proof}
%As $\chi$ is of order 2, $\chi(D_i)$ is an integer for any $1 \le i \le m$. \{%The result then follows from Results \ref{ResG0} and \ref{ResGn}.
%\end{proof}

\medskip

We next prove a simple but crucial observation.

\begin{lem}\label{LemCharBas}
If $ \chi \in G_N^*$ and $ \chi(D_i) \neq  \chi(D_j)$, then $ \chi(D-D_i-D_j) = 0$ and $ \chi(D_iD_j^{(-1)})=-\lambda$.
\end{lem}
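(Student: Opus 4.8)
The plan is to work entirely on the character level through Equation \eqref{EqCharSEDF}. Fix $\chi \in G_N^*$, so that $\chi(D) \neq 0$, and abbreviate $s := \chi(D)$ and $x_t := \chi(D_t)$ for $1 \le t \le m$, noting that $s = \sum_{t=1}^m x_t$. Specializing \eqref{EqCharSEDF} to the index $t$ gives $x_t(\bar s - \bar x_t) = -\lambda$, that is,
\[ x_t \bar s = |x_t|^2 - \lambda. \]
First I would record that $x_t \neq 0$ for every $t$, since otherwise the left side vanishes while the right side equals $-\lambda \neq 0$.

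The crucial observation is that the right-hand side $|x_t|^2 - \lambda$ is real, so $x_t \bar s \in \mathbb{R}$ for every $t$. Because $s \neq 0$, this forces each $x_t$ to be a real scalar multiple of $s$, so I would write $x_t = r_t s$ with $r_t \in \mathbb{R}$. Substituting into $s = \sum_t x_t$ yields $\sum_t r_t = 1$, and substituting into the displayed relation gives $r_t |s|^2 = r_t^2 |s|^2 - \lambda$, so that every $r_t$ is a root of the single quadratic
\[ |s|^2 \, r^2 - |s|^2 \, r - \lambda = 0. \]
Since $\lambda > 0$, the discriminant $|s|^4 + 4\lambda|s|^2$ is strictly positive, so this quadratic has exactly two distinct real roots $\alpha, \beta$ satisfying $\alpha + \beta = 1$ and $\alpha\beta = -\lambda/|s|^2$.

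Now I would invoke the hypothesis $x_i \neq x_j$. As $x_i = r_i s$, $x_j = r_j s$ and $s \neq 0$, we get $r_i \neq r_j$, whence $\{r_i, r_j\} = \{\alpha, \beta\}$. Therefore $r_i + r_j = \alpha + \beta = 1$, which gives
\[ \chi(D - D_i - D_j) = s - x_i - x_j = (1 - r_i - r_j) s = 0, \]
the first assertion. For the second, note $\chi(D_j^{(-1)}) = \overline{\chi(D_j)} = \bar x_j$, so
\[ \chi(D_i D_j^{(-1)}) = x_i \bar x_j = r_i r_j |s|^2 = \alpha\beta\,|s|^2 = -\lambda, \]
the second assertion.

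I do not anticipate a serious obstacle: the entire content is the single clean observation that \eqref{EqCharSEDF} forces every $\chi(D_t)$ to be a real multiple of $\chi(D)$, after which both conclusions fall out of the elementary relations $\alpha + \beta = 1$ and $\alpha\beta = -\lambda/|s|^2$ between the two roots. The only points needing a line of care are justifying $\chi(D_t) \neq 0$ (so that \emph{real multiple} is meaningful and $r_i \neq r_j$ genuinely separates the two roots) and recording that the two roots are distinct because $\lambda > 0$.
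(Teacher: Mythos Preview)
Your proof is correct and takes a genuinely different route from the paper's. The paper argues at the group-ring level: setting $X = D - D_i - D_j$, it subtracts two instances of the defining equation \eqref{eqn-def} to obtain $D_i X^{(-1)} = D_j^{(-1)} X$, applies $\chi$ to get $|\chi(D_i)|^2 |\chi(X)|^2 = |\chi(D_j)|^2 |\chi(X)|^2$, and then invokes an external reference (\cite[Equation~(5.3)]{JL}) for the implication $\chi(D_i)\neq\chi(D_j)\Rightarrow |\chi(D_i)|^2\neq|\chi(D_j)|^2$, forcing $\chi(X)=0$; the second conclusion then follows by applying $\chi$ to one of the original equations. Your argument instead stays entirely at the character level and is self-contained: the single observation that $x_t\bar s = |x_t|^2-\lambda\in\mathbb{R}$ pins every $x_t$ to the real line through $s$, after which both conclusions drop out of Vieta's relations $\alpha+\beta=1$, $\alpha\beta=-\lambda/|s|^2$ for the common quadratic. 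Your approach has the advantage of avoiding the citation to \cite{JL} and in fact rederives along the way the dichotomy underlying Result~\ref{ResGn}; the paper's approach, on the other hand, keeps the group-ring identity $D_iX^{(-1)}=D_j^{(-1)}X$ visible, which is sometimes useful elsewhere.
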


\begin{proof}
By Result \ref{ResGn} (c), there exist $1 \le i, j \le m$, such that $ \chi(D_i) \ne  \chi(D_j)$. Let $X:= D-D_i-D_j$. Equation \eqref{eqn-def} gives
\[ D_i(X^{(-1)} +D_j^{(-1)}) = \lambda(G-1_G) \mbox{ and }D_j^{(-1)}(X+D_i) = \lambda(G-1_G). \]
Since $G$ is abelian, we conclude $D_iX^{(-1)} = D_j^{(-1)}X$. 
Therefore, 
\[ | \chi(D_i)|^2 | \chi(X)|^2 = | \chi(D_j)|^2 | \chi(X)|^2.\]
Since $ \chi(D_i) \neq  \chi(D_j)$, $ |\chi(D_i)|^2 \neq  |\chi(D_j)|^2$  by  \cite[Equation (5.3)]{JL}.
Hence, $ \chi(X) = 0$.
Lastly, applying $ \chi$ to $D_i(X^{(-1)} + D_j^{(-1)}) = \lambda(G-1_G)$ gives $ \chi(D_iD_j^{(-1)})=-\lambda$.
\end{proof}

The above result can be extended to two characters in $G_N^*$.

\begin{cor}\label{LemSeparate}
If $ \chi_1, \chi_2 \in G_N^*$, then there exist distinct $1 \le i,j \le m$ such that $ \chi_1(D-D_i-D_j) =  \chi_2(D-D_i-D_j)=0$.
\end{cor}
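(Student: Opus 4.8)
The plan is to reduce the statement to a finite counting problem on the index set $\{1,\dots,m\}$. For a fixed $\chi\in G_N^*$, Result \ref{ResGn} tells us that $|\chi(D_i)|^2$ takes only the two values $a_\chi$ and $\lambda^2/a_\chi$, and these are distinct since $a_\chi<\lambda$ forces $a_\chi\neq\lambda^2/a_\chi$. Hence I would partition the index set as
$$A_\chi=\{i : |\chi(D_i)|^2=a_\chi\} \quad\text{and}\quad B_\chi=\{i : |\chi(D_i)|^2=\lambda^2/a_\chi\},$$
so that $|A_\chi|=\ell_\chi$ and $|B_\chi|=m-\ell_\chi$. The key link to Lemma \ref{LemCharBas} is that whenever exactly one of $i,j$ lies in $A_\chi$, the two moduli differ, so $\chi(D_i)\neq\chi(D_j)$ and therefore $\chi(D-D_i-D_j)=0$. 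Thus it suffices to produce a single pair $\{i,j\}$ that is separated by $A_{\chi_1},B_{\chi_1}$ and simultaneously separated by $A_{\chi_2},B_{\chi_2}$.

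Next I would classify each index $t\in\{1,\dots,m\}$ by its pair of memberships, i.e.\ whether $t$ lies in $A_{\chi_1}$ or $B_{\chi_1}$ and whether $t$ lies in $A_{\chi_2}$ or $B_{\chi_2}$. This yields four classes, whose cardinalities I denote $n_{AA},n_{AB},n_{BA},n_{BB}$. A pair $\{i,j\}$ is split by both partitions exactly when its two indices lie in the classes $(A,A)$ and $(B,B)$, or in the classes $(A,B)$ and $(B,A)$. So the corollary follows once I show that either both $n_{AA},n_{BB}$ are positive, or both $n_{AB},n_{BA}$ are positive.

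To establish this I would argue by contradiction using the constraints $n_{AA}+n_{AB}=\ell_{\chi_1}$, $n_{BA}+n_{BB}=m-\ell_{\chi_1}$, $n_{AA}+n_{BA}=\ell_{\chi_2}$, and $n_{AB}+n_{BB}=m-\ell_{\chi_2}$. Negating the desired conclusion gives $(n_{AA}=0 \text{ or } n_{BB}=0)$ and $(n_{AB}=0 \text{ or } n_{BA}=0)$, which splits into exactly four subcases. Each subcase forces one of $\ell_{\chi_1},\ell_{\chi_2}$ to equal $0$ or $m$: for instance $n_{AA}=0$ together with $n_{BA}=0$ forces $\ell_{\chi_2}=0$, while $n_{BB}=0$ together with $n_{BA}=0$ forces $\ell_{\chi_1}=m$. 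Both outcomes contradict the bounds $m/2<\ell_\chi\le m-2$ supplied by Result \ref{ResGn}(c), so the contradiction closes every branch.

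The elementary case check is routine; the substantive ingredient is the sharp pair of inequalities $m/2<\ell_\chi\le m-2$, which is precisely what prevents any class from absorbing an entire partition. The main point to verify carefully is the translation step, namely that splitting $\{i,j\}$ across $A_\chi$ and $B_\chi$ genuinely yields $\chi(D-D_i-D_j)=0$ through Lemma \ref{LemCharBas}, since everything downstream is pure counting.
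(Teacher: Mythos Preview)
Your proof is correct and follows essentially the same approach as the paper: both define the sets $A_{\chi_t}=\{i:|\chi_t(D_i)|^2=a_{\chi_t}\}$ and use the size bounds from Result~\ref{ResGn}(c) to locate a pair $\{i,j\}$ split by both bipartitions, then invoke Lemma~\ref{LemCharBas}. The only cosmetic difference is that the paper argues constructively via a containment dichotomy (either $A_{\chi_1}\subset A_{\chi_2}$, $A_{\chi_2}\subset A_{\chi_1}$, or neither) and directly exhibits the pair, whereas you organize the same content as a four-cell contradiction argument.
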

\begin{proof}
For $t \in \{1,2\}$, let $S_t:= \{1 \leq j \leq m \mid | \chi_t(D_j)|^2 = a_{ \chi_t} \}$.
By Lemma \ref{LemCharBas}, it suffices to show that 
there exist distinct $i,j$ such that $\chi_t(D_i)\neq \chi_t(D_j)$ for $t=1,2$. 

Suppose $S_1 \subset S_2$ or $S_2 \subset S_1$. Without loss of generality, we may assume
$S_1 \subset S_2$. We pick $i\in S_1\cap S_2$ and $j\notin S_2$. Then
$\chi_t(D_i)\neq \chi_t(D_j)$ for $t=1,2$. We may now assume both $S_1\backslash  S_2$ and $S_2\backslash  S_1$ are nonempty. Then, there exist  $i\in S_1\backslash  S_2$ and $j\in S_2\backslash S_1$. Again,
$\chi_1(D_i)\neq \chi_1(D_j)$ and $\chi_2(D_i)\neq \chi_2(D_j)$. 
\end{proof}

\medskip

Lemma \ref{LemCharBas} also allows us to find some useful relations between $\ell_{\chi}$ and $m$.

\begin{cor}\label{CorBasic}
Let $ \chi \in G_N^*$. Then
\begin{itemize}
\item [(a)]  $ (\ell_{ \chi}-1)(\lambda+a_{ \chi})=\lambda(m-2)$.
\item [(b)]  $(m - \ell_{ \chi}-1)(\lambda+a_{ \chi})=a_{ \chi}(m-2)$,
\item [(c)]  $(\lambda+a_{ \chi})$ divides $(m-2)\cdot \gcd(\lambda,a_{ \chi})$.
\end{itemize}
\end{cor}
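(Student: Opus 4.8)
The plan is to use Lemma \ref{LemCharBas} to determine the character values $\chi(D_i)$ exactly (not merely up to absolute value) and then to evaluate $\chi(D)$ in two different ways. Fix $\chi \in G_N^*$ and abbreviate $a := a_\chi$. By Result \ref{ResGn}, every $|\chi(D_i)|^2$ equals either $a$ or $\lambda^2/a$, and since $a < \lambda$ these two quantities are distinct. I would therefore partition the indices as $A := \{\, i : |\chi(D_i)|^2 = a \,\}$ with $|A| = \ell_\chi$, and $B := \{\, i : |\chi(D_i)|^2 = \lambda^2/a \,\}$ with $|B| = m - \ell_\chi$. Result \ref{ResGn}(c) guarantees $\ell_\chi > m/2$ and $\ell_\chi \leq m-2$, so both $A$ and $B$ are nonempty (indeed each has at least two elements).

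The crucial step is to upgrade ``equal absolute value'' to ``equal value'' on each class. For $i \in A$ and $j \in B$ we have $\chi(D_i) \neq \chi(D_j)$, so Lemma \ref{LemCharBas} applies and gives both $\chi(D - D_i - D_j) = 0$, i.e. $\chi(D) = \chi(D_i) + \chi(D_j)$, and $\chi(D_i)\overline{\chi(D_j)} = -\lambda$. Fixing a single index of $B$ and letting $i$ range over $A$ then forces all $\chi(D_i)$ with $i \in A$ to equal one common value $\alpha$, with $|\alpha|^2 = a$; symmetrically all $\chi(D_j)$ with $j \in B$ equal a common $\beta$, with $|\beta|^2 = \lambda^2/a$, and moreover $\alpha\overline{\beta} = -\lambda$ and $\chi(D) = \alpha + \beta$. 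Since $\lambda$ is real, conjugating also yields $\overline{\alpha}\beta = -\lambda$.

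Next I would compute $\chi(D) = \sum_{i=1}^m \chi(D_i) = \ell_\chi\,\alpha + (m - \ell_\chi)\,\beta$ and compare with $\chi(D) = \alpha + \beta$, obtaining $(\ell_\chi - 1)\alpha + (m - \ell_\chi - 1)\beta = 0$. Multiplying this relation by $\overline{\alpha}$ and using $|\alpha|^2 = a$ together with $\overline{\alpha}\beta = -\lambda$ collapses it to the single scalar identity $(\ell_\chi - 1)a = (m - \ell_\chi - 1)\lambda$ (multiplying by $\overline{\beta}$ instead gives the same identity, as a consistency check). Part (a) then follows by writing $m - \ell_\chi - 1 = (m-2) - (\ell_\chi - 1)$ and collecting terms to get $(\ell_\chi - 1)(\lambda + a) = \lambda(m-2)$; part (b) follows symmetrically from $\ell_\chi - 1 = (m-2) - (m - \ell_\chi - 1)$. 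For part (c), parts (a) and (b) show $\lambda + a$ divides both $\lambda(m-2)$ and $a(m-2)$, hence divides their greatest common divisor $(m-2)\gcd(\lambda, a)$.

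The step I expect to be the main obstacle is the constancy argument, since Result \ref{ResGn} controls only the absolute values $|\chi(D_i)|^2$; it is the vanishing $\chi(D - D_i - D_j) = 0$ coming out of Lemma \ref{LemCharBas}, rather than any norm computation, that actually pins each class to one complex number. Once that collapse is justified, the rest is a short manipulation of two complex scalars, and the only remaining care is to confirm that $A$ and $B$ are both nonempty so that the pairs $(i,j)$ invoked above genuinely exist, which is exactly what $m/2 < \ell_\chi \leq m-2$ provides.
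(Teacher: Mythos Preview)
Your proof is correct and follows essentially the same route as the paper's. Both arguments pick $i\in A$, $j\in B$, invoke Lemma~\ref{LemCharBas} to get $\chi(D-D_i-D_j)=0$, collapse this to $(\ell_\chi-1)\alpha+(m-\ell_\chi-1)\beta=0$, and then extract the scalar identity; the only cosmetic difference is that the paper takes absolute values of this relation (using $\ell_\chi-1,\ m-\ell_\chi-1>0$ to pick the positive square root), whereas you multiply by $\overline{\alpha}$ and use $\overline{\alpha}\beta=-\lambda$ directly.

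One small remark: your argument for constancy on each class (fix $j\in B$, vary $i\in A$, and read $\chi(D_i)=\chi(D)-\chi(D_j)$ off of Lemma~\ref{LemCharBas}) is a nice self-contained derivation. The paper's proof passes over this point in a single ``Consequently'', implicitly relying on the fact (from \cite[Equation~(5.3)]{JL}, already invoked in the proof of Lemma~\ref{LemCharBas}) that equal $|\chi(D_s)|^2$ forces equal $\chi(D_s)$. Your version avoids that external citation at the cost of a couple of extra lines.
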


\begin{proof}
By Result \ref{ResGn} and Lemma \ref{LemCharBas}, there exist $1 \leq i,j \leq m$ such that
\[ | \chi(D_i)|^2 = a_{ \chi},  \ | \chi(D_j)|^2 = \frac{\lambda^2}{a_{ \chi}} \mbox{ and } \chi(D-D_i-D_j) = 0.\] Consequently, $(\ell_{ \chi}-1) \chi(D_i) + (m - \ell_{ \chi}-1) \chi(D_j) = 0$.
Hence,
\[ \frac{(\ell_{ \chi}-1)^2}{(m - \ell_{ \chi}-1)^2} = \frac{| \chi(D_j)|^2}{| \chi(D_i)|^2} = \left(\frac{\lambda}{a_{ \chi}} \right)^2.\]
This shows
$$\frac{\ell_{ \chi}-1}{m - \ell_{ \chi}-1} = \frac{\lambda}{a_{ \chi}}.$$
Therefore,
\[ \ell_{ \chi}-1=\frac{\lambda(m-2)}{\lambda+a_{ \chi}} \ \mbox{ and } \ m - \ell_{ \chi}-1=\frac{a_ \chi(m-2)}{\lambda+a_{ \chi}}.\]
Thus $(\lambda+a_\chi)$ divides $a_\chi(m-2)$. 
As $\gcd(\lambda, a_ \chi)=\gcd(\lambda+a_ \chi, a_ \chi)$; and 
$(\lambda+a_{\chi})/\gcd(\lambda, a_ \chi), a_ \chi/ \gcd(\lambda, a_{ \chi})$ are relatively prime, (c) follows.
\end{proof}

\medskip

Next, we record some useful results in character theory that will be employed later.
The following is a standard result, see \cite[Chapter VI, Lemma 3.5]{BJL}, for instance.

\begin{res}[Fourier inversion formula] \label{fourier}
Let $G$ be a finite abelian group and $X=\sum_{g\in G}a_gg \in\mathbb{Z}[G]$. Then
$$a_g=\frac{1}{|G|}\sum_{ \chi\in G^*} \chi(Xg^{-1})  \text{ for all } g\in G.$$
\end{res}

\medskip

It is then immediate to deduce the following:

\begin{res}
If $A\in \Z[G]$ and $\chi(A)=0$ for all nonprincipal $\chi\in G^*$, then $A=xG$ for some integer $x$.
\end{res}

Finally, we turn our attention to the structure of $D_iD_i^{(-1)}$. Observe that in view of Results \ref{ResG0} and \ref{ResGn},
$\chi(D_iD_i^{(-1)})\in \Z$ for any $\chi\in G^*$.  This observation is crucial in knowing the structure of $D_iD_i^{(-1)}$. We first define
\[ \omega(g)=\{g^t \ | \ t \in \mathbb{Z}, \gcd(t,|G|) = 1\}\]
for any $g\in G$. We call $\omega(g)$ the orbit of $g$.

\begin{lem}\label{LemOrbit0}
Let $A \in \mathbb{Z}[G]$. If $ \chi(A) \in \mathbb{Q}$ for any $ \chi \in G^*$, then
\[ A=\sum_{i=1}^r a_i\omega(g_i) \]
for some $g_1,\ldots, g_r \in G$ and $a_1,\ldots, a_r\in \mathbb{Z}$.
\end{lem}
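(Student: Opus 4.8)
The statement is essentially a Galois-descent fact: an integral group ring element whose every character value lands in $\mathbb{Q}$ must be a $\mathbb{Z}$-linear combination of full Galois orbits. The plan is to exploit the action of the Galois group of the relevant cyclotomic field on both the characters and the group elements, and to identify the invariants of the two actions.

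First I would fix notation: let $n = |G|$ and let $\zeta = \zeta_n$ be a primitive $n$-th root of unity, so that every character value $\chi(g)$ lies in the cyclotomic field $\mathbb{Q}(\zeta_n)$. The Galois group $\Gamma := \mathrm{Gal}(\mathbb{Q}(\zeta_n)/\mathbb{Q})$ is canonically $(\mathbb{Z}/n\mathbb{Z})^{\times}$, where the element $\sigma_t$ corresponding to $t$ with $\gcd(t,n)=1$ acts by $\zeta_n \mapsto \zeta_n^t$. The key compatibility I would invoke is that for any $\chi \in G^*$ and any $g \in G$ one has $\sigma_t(\chi(g)) = \chi(g^t) = \chi(g)^t$ in the sense that $\sigma_t(\chi(X)) = \chi(X^{(t)})$ for all $X \in \mathbb{Z}[G]$; this follows directly from the definition $X^{(t)} = \sum_g a_g g^t$ and the multiplicativity of $\sigma_t$ on roots of unity.

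The central step is then the following. Suppose $\chi(A) \in \mathbb{Q}$ for every $\chi \in G^*$. Since each $\chi(A)$ is Galois-fixed, for every $t$ coprime to $n$ we get
\[
\chi(A^{(t)}) = \sigma_t(\chi(A)) = \chi(A) \qquad \text{for all } \chi \in G^*.
\]
Because the characters of a finite abelian group separate elements of $\mathbb{Z}[G]$ (equivalently, by the Fourier inversion formula, Result \ref{fourier}, a group ring element is determined by all its character values), it follows that $A^{(t)} = A$ for every $t$ with $\gcd(t,n)=1$. Writing $A = \sum_{g \in G} a_g g$, the equality $A^{(t)} = A$ forces $a_{g^t} = a_g$ for all $g$ and all admissible $t$. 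Hence the coefficient function is constant on each orbit $\omega(g) = \{\,g^t : \gcd(t,n)=1\,\}$, and grouping the sum over orbit representatives $g_1, \ldots, g_r$ yields $A = \sum_{i=1}^r a_i \omega(g_i)$ with $a_i = a_{g_i} \in \mathbb{Z}$, as claimed.

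The only genuinely delicate point, and the one I would be most careful about, is the passage from ``$\chi(A^{(t)}) = \chi(A)$ for all $\chi$'' to ``$A^{(t)} = A$'' — that is, justifying that the characters separate group ring elements. I would supply this via the Fourier inversion formula already recorded as Result \ref{fourier}: applying it to $A^{(t)} - A$ shows every coefficient of $A^{(t)} - A$ vanishes once all character values agree. The remaining manipulations (the Galois action on roots of unity, the identity $\sigma_t(\chi(A)) = \chi(A^{(t)})$, and the bookkeeping of orbits) are routine, so no serious obstacle arises beyond correctly setting up the Galois-theoretic dictionary.
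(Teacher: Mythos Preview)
Your proof is correct and follows essentially the same approach as the paper's: both use the Galois automorphism $\sigma_t$ of $\mathbb{Q}(\zeta_{|G|})$ together with Fourier inversion to deduce $a_g = a_{g^t}$ for all $t$ coprime to $|G|$. The paper applies $\sigma_t$ directly to the Fourier inversion formula for $a_g$ and identifies the result with the formula for $a_{g^t}$, whereas you first establish $A^{(t)}=A$ globally and then read off the coefficient equality; these are the same argument packaged slightly differently.
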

\begin{proof}
Write $A=\sum a_g g\in \mathbb{Z}[G]$, to show $A$ is of desired form, it suffices to show that $a_{g}=a_{g^t}$ for any $t$ coprime with $|G|$.
By Result \ref{fourier},
$$a_g=\frac{1}{|G|}\sum_{ \chi\in G^*} \chi(Ag^{-1})  \mbox{ and } a_{g^t}=\frac{1}{|G|}\sum_{ \chi\in G^*} \chi(Ag^{-t})  $$
Let $v=|G|$. For any $t$ with $\gcd(t,v) = 1$, we define
$\sigma_t \in {\rm Gal}(\mathbb{Q}(\zeta_v)/\mathbb{Q})$ such that  $\sigma_t(\zeta_{v})=\zeta_{v}^t$.
Note that
\[  \sigma_t (\chi(Ag^{-1}))= \sigma_t(\chi(A))\sigma_t(\chi(g^{-1}))=\chi(A)\chi(g^{-t})\]
as $\chi(A)\in \mathbb{Q}$.
Therefore,
\[ a_g=\sigma_t(a_g)=\frac{1}{|G|}\sum_{ \chi\in G^*} \sigma_t(\chi(Ag^{-1}) ) =
\frac{1}{|G|}\sum_{ \chi\in G^*} \chi(Ag^{-t})=a_{g^t}.\]
\end{proof}

\medskip

We record the next two results which follow from Results \ref{ResG0}, \ref{ResGn} and Lemmas \ref{LemCharBas}, \ref{LemOrbit0}.

\begin{cor}\label{CorDiDi}
For any $1 \le i \le m$, there exist $g_j$'s in $G$ and $a_j$'s in $\Z$ such that
\[ D_iD_i^{(-1)}=\sum_{j=1}^{r} a_j\omega(g_j) .\]
\end{cor}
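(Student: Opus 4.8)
The plan is to invoke Lemma \ref{LemOrbit0}, which reduces the statement to verifying that $\chi(D_iD_i^{(-1)}) \in \mathbb{Q}$ for every $\chi \in G^*$. Since $D_iD_i^{(-1)}$ lies in $\Z[G]$ and $G$ is abelian, this is the natural route: once rationality of all character values is established, the orbit decomposition follows from the lemma with no additional work, applied to $A = D_iD_i^{(-1)}$.

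First I would compute $\chi(D_iD_i^{(-1)})$ explicitly. Using $\chi(g^{-1}) = \overline{\chi(g)}$ for characters of a finite abelian group, one has $\chi(D_i^{(-1)}) = \overline{\chi(D_i)}$, and hence $\chi(D_iD_i^{(-1)}) = \chi(D_i)\overline{\chi(D_i)} = |\chi(D_i)|^2$. The task is therefore to show that $|\chi(D_i)|^2$ is rational, in fact an integer, for every $\chi$.

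Next I would split into the three classes given by the partition $G^* = \{\chi_0\} \cup G_0^* \cup G_N^*$. For the principal character, $\chi_0(D_i) = k$, so $|\chi_0(D_i)|^2 = k^2 \in \Z$. For $\chi \in G_0^*$, Result \ref{ResG0} gives $|\chi(D_i)|^2 = \lambda \in \Z$. For $\chi \in G_N^*$, Result \ref{ResGn} states that $|\chi(D_i)|^2$ equals either $a_{\chi}$ or $\lambda^2/a_{\chi}$, both of which are integers. Thus $\chi(D_iD_i^{(-1)}) \in \Z \subseteq \mathbb{Q}$ in every case.

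With rationality of all character values in hand, Lemma \ref{LemOrbit0} applies directly and yields the desired form $D_iD_i^{(-1)} = \sum_{j=1}^{r} a_j\omega(g_j)$. I do not expect any serious obstacle: the whole content of the argument is the case-by-case check that $|\chi(D_i)|^2$ is an integer, which is precisely what Results \ref{ResG0} and \ref{ResGn} were designed to supply. The only point needing mild care is the identity $\chi(D_i^{(-1)}) = \overline{\chi(D_i)}$, together with the observation that this reduction is exactly the hypothesis of Lemma \ref{LemOrbit0}, so that its conclusion can be quoted verbatim.
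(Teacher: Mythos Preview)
Your proposal is correct and is exactly the argument the paper intends: it states that Corollary~\ref{CorDiDi} follows from Results~\ref{ResG0}, \ref{ResGn} and Lemma~\ref{LemOrbit0}, and your case split over $\{\chi_0\}\cup G_0^*\cup G_N^*$ to show $|\chi(D_i)|^2\in\Z$ is precisely the verification needed to invoke Lemma~\ref{LemOrbit0}. (The reference to Lemma~\ref{LemCharBas} in the paper's lead-in sentence is only needed for Corollary~\ref{CorDiDj}, not here.)
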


\begin{cor}\label{CorDiDj}
Suppose $G_0^*=\emptyset$. For any $1\leq i,j\leq m$, there exist $g_t$'s in $G$ and $a_t$'s in $\Z$ such that
\[ D_iD_j^{(-1)}=\sum_{t=1}^{r} a_t\omega(g_t) \]
\end{cor}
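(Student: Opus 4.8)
The plan is to reduce the statement to a rationality check on character values and then invoke Lemma \ref{LemOrbit0}. That lemma guarantees that any $A \in \Z[G]$ with $\chi(A) \in \mathbb{Q}$ for every $\chi \in G^*$ is automatically an integer combination of orbits $\omega(g)$. So it suffices to verify that $\chi(D_iD_j^{(-1)}) \in \mathbb{Q}$ for all $\chi \in G^*$. I would begin by recording the identity $\chi(D_iD_j^{(-1)}) = \chi(D_i)\overline{\chi(D_j)}$, which follows since $\chi(D_j^{(-1)}) = \overline{\chi(D_j)}$.

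Next I would dispose of the principal character: $\chi_0(D_iD_j^{(-1)}) = k^2 \in \Z$. Because we are assuming $G_0^* = \emptyset$, the partition $G^* = \{\chi_0\} \cup G_0^* \cup G_N^*$ collapses to $G^* = \{\chi_0\} \cup G_N^*$, so the only remaining characters lie in $G_N^*$. This is precisely where the hypothesis $G_0^* = \emptyset$ enters, and it explains why the statement for $D_iD_j^{(-1)}$ needs this extra assumption, unlike the statement for $D_iD_i^{(-1)}$ in Corollary \ref{CorDiDi}: for $\chi \in G_0^*$ one has $\chi(D_iD_i^{(-1)}) = |\chi(D_i)|^2 = \lambda$ by Result \ref{ResG0}, but there is no corresponding control on the argument of $\chi(D_i)\overline{\chi(D_j)}$ when $i \neq j$.

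For $\chi \in G_N^*$ I would split into two cases according to whether $\chi(D_i)$ and $\chi(D_j)$ coincide. If $\chi(D_i) \neq \chi(D_j)$, then Lemma \ref{LemCharBas} applies directly and gives $\chi(D_iD_j^{(-1)}) = -\lambda \in \Z$. If instead $\chi(D_i) = \chi(D_j)$, then $\chi(D_iD_j^{(-1)}) = |\chi(D_i)|^2$, which by Result \ref{ResGn} equals either $a_{\chi}$ or $\lambda^2/a_{\chi}$, both of which are integers. Hence $\chi(D_iD_j^{(-1)}) \in \mathbb{Q}$ in every case.

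Having verified rationality for all $\chi \in G^*$, I would finish by applying Lemma \ref{LemOrbit0} to $A = D_iD_j^{(-1)}$, which yields the desired orbit decomposition. There is no genuine obstacle here: the substance is entirely front-loaded into Lemma \ref{LemOrbit0}, Lemma \ref{LemCharBas}, and Result \ref{ResGn}. The only point requiring care is that the case $\chi(D_i) = \chi(D_j)$ must be treated separately, since Lemma \ref{LemCharBas} does not cover it, and one must remember to invoke the integrality of $a_{\chi}$ and $\lambda^2/a_{\chi}$ from Result \ref{ResGn} there.
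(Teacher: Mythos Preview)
Your proof is correct and follows exactly the approach the paper indicates: reduce to Lemma~\ref{LemOrbit0} by checking that $\chi(D_iD_j^{(-1)})\in\mathbb{Q}$ for every $\chi\in G^*$, using Lemma~\ref{LemCharBas} when $\chi(D_i)\neq\chi(D_j)$ and Result~\ref{ResGn} when $\chi(D_i)=\chi(D_j)$. The paper itself merely asserts that the corollary follows from Results~\ref{ResG0}, \ref{ResGn} and Lemmas~\ref{LemCharBas}, \ref{LemOrbit0}, and you have supplied precisely those details.
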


To conclude this section, we remind the reader of the following result on character theory that we use often. 

\begin{res}
Let $\Omega$ be a subgroup of $G^*$ and $N=\bigcap_{\chi\in \Omega} \ker (\chi)$.
For any $\chi \in \Omega$, $\chi$ induced a character on $G/N$ also denoted by $\chi$ such that  
\[ \chi(gN)=\chi(g) \mbox{ for all } g\in G.\]
Then $\Omega$ can be regarded as $(G/N)^*$. Moreover, if $\pi: G\rightarrow G/N $ is the natural projection, then for any $A\in \Z[G]$, $\chi(A)=\chi (\pi(A))$ for any $\chi\in \Omega$.
\end{res}

\section{A fundamental result}\label{Sec3}

In this section, we are going to derive some fundamental relations among the parameters of an SEDF, which will be frequently used later. The next theorem gives a very important bound on $k$. Surprisingly, the result is not recorded earlier.

For convenience, we keep the notations used in Section \ref{Sec2}. Recall that for any $ \chi \in G^*$, we define
\[ a_{ \chi}:= \min\{| \chi(D_i)|^2\ | 1 \leq i \leq m \} \mbox{ and } a:=\min\{ a_{ \chi} \ |\   \chi\in G^*\}.\]

\begin{thm}\label{ThmBound}
$$ \lambda < k < \frac{v-1}{v} \lambda + \frac{\lambda^2}{ma}. $$
Furthermore, if we set $d=\gcd(\lambda, a)$ and $x=(m-2)d/(\lambda+a)$.  Then,
\[ \lambda < k < (1+\frac{d}{ax}) \lambda < 2\lambda . \]
\end{thm}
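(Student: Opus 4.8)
The plan is to establish the two displayed chains of inequalities by extracting information from a single well-chosen nonprincipal character, namely one attaining the global minimum $a$. Let me set up the strategy.

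\medskip

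\textbf{Lower bound $\lambda < k$.} First I would observe that $\lambda < k$ follows almost immediately from the basic equation \eqref{EqBasic}, which reads $(m-1)k^2 = \lambda(v-1)$. Since $k$ distinct elements in each of $m$ disjoint subsets must fit inside $G$, we have $mk \leq v$, so $v \geq mk > (m-1)k$, giving $v - 1 \geq (m-1)k - 1$. Substituting into \eqref{EqBasic}, $(m-1)k^2 = \lambda(v-1) \geq \lambda((m-1)k - 1)$, which rearranges to $k \geq \lambda - \frac{\lambda}{(m-1)k} > \lambda - 1$, hence $k \geq \lambda$; a strict inequality should then come from noting the $mk \leq v$ bound is strict or from Result~\ref{ResGn} that $a_\chi < \lambda$. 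I would tidy this to get $\lambda < k$ cleanly.

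\medskip

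\textbf{Upper bound $k < \frac{v-1}{v}\lambda + \frac{\lambda^2}{ma}$.} This is the heart of the argument, and I expect it to be the main obstacle. The idea is to pick $\chi \in G_N^*$ with $a_\chi = a$ (which exists by Result~\ref{ResGn}(b)) and to sum $|\chi(D_i)|^2$ over all $i$. By Result~\ref{ResGn}, each $|\chi(D_i)|^2$ is either $a$ or $\lambda^2/a$, and by Corollary~\ref{CorBasic} exactly $\ell_\chi$ of them equal $a$ while $m - \ell_\chi$ equal $\lambda^2/a$. On the other hand, one computes $\sum_{i=1}^m |\chi(D_i)|^2$ directly: writing $D = \sum_i D_i$, the quantity $\sum_i \chi(D_i)\overline{\chi(D_i)}$ can be related to $\chi(DD^{(-1)})$ and the cross terms $\chi(D_iD_j^{(-1)})$, which Lemma~\ref{LemCharBas} controls (each nonzero cross term contributes $-\lambda$). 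The cleanest route is likely to use $|\chi(D)|^2 = \sum_{i,j}\chi(D_i)\overline{\chi(D_j)}$; since $\chi \in G_N^*$ means $\chi(D) \neq 0$ in general, but I would instead exploit that $\sum_i |\chi(D_i)|^2 = \ell_\chi a + (m-\ell_\chi)\lambda^2/a$ and bound this from below using $\ell_\chi \leq m-2$ from Result~\ref{ResGn}(c). Comparing the average value of $|\chi(D_i)|^2$ against the average of the true squared magnitudes (whose sum over \emph{all} characters is pinned down by Parseval/the coefficient count $\sum_{\chi}|\chi(D_i)|^2 = vk$), one isolates $k$. The term $\frac{v-1}{v}\lambda$ should emerge from the principal-character contribution $k^2$ versus the nonprincipal sum, and $\frac{\lambda^2}{ma}$ from the worst-case block where $|\chi(D_i)|^2 = \lambda^2/a$ is largest; the $ma$ in the denominator signals an averaging over the $m$ blocks.

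\medskip

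\textbf{The refined chain $\lambda < k < (1 + \frac{d}{ax})\lambda < 2\lambda$.} For the second statement I would feed the explicit formula for $\ell_\chi$ from Corollary~\ref{CorBasic}(a), namely $\ell_\chi - 1 = \lambda(m-2)/(\lambda + a)$, into the first bound. With $d = \gcd(\lambda, a)$ and $x = (m-2)d/(\lambda+a)$ (which is an integer by Corollary~\ref{CorBasic}(c)), the quantity $\lambda^2/(ma)$ should simplify against the expression for $x$ to yield the factor $1 + \frac{d}{ax}$. The final inequality $(1 + \frac{d}{ax})\lambda < 2\lambda$ reduces to showing $d/(ax) < 1$, i.e. $d < ax = a(m-2)d/(\lambda+a)$, equivalently $\lambda + a < a(m-2)$, which should follow from $m \geq 3$ together with $a < \lambda$ (Result~\ref{ResGn}) once one checks $m$ is large enough; the only delicate point is the borderline small-$m$ case, but Proposition~\ref{prop-nonexistence}(a) already removes $m \in \{3,4\}$, so I may assume $m \geq 5$, making $a(m-2) \geq 3a > \lambda + a$ whenever $\lambda < 2a$, and handling $\lambda \geq 2a$ separately via the divisibility in Result~\ref{ResGn}(d). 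I anticipate this last case-split is where care is needed, but it is routine rather than deep.
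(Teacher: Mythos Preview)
Your outline has the right ingredients but assembles them in the wrong order for the upper bound, and the key identity is not the one you describe. Picking a \emph{single} $\chi\in G_N^*$ with $a_\chi=a$ and summing $|\chi(D_i)|^2$ over $i$ gives you a number in terms of $a,\lambda,\ell_\chi$, but $k$ never enters that computation; the Parseval relation $\sum_{\chi}|\chi(D_i)|^2=vk$ runs in the orthogonal direction (fixed $i$, sum over $\chi$), so the two do not combine the way you suggest. The paper instead applies Fourier inversion to the coefficient of $1_G$ in $\sum_{i=1}^m D_iD_i^{(-1)}$, obtaining the exact identity
\[
vk \;=\; k^2 + (v-1)\lambda + \frac{1}{m}\sum_{\chi\ne\chi_0}\frac{(\lambda-a_\chi)^2}{a_\chi},
\]
where the computation $\sum_i|\chi(D_i)|^2=m\lambda+(\lambda-a_\chi)^2/a_\chi$ is carried out for \emph{every} nonprincipal $\chi$ (using Corollary~\ref{CorBasic} for $\chi\in G_N^*$ and the trivial $a_\chi=\lambda$ for $\chi\in G_0^*$). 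Only after this do you bound each summand by its worst case $(\lambda-a)^2/a$, and then use $k^2/v=\lambda/(m-1)$ together with $\lambda/a\ge(m-1)/(m-3)$ to simplify to $\tfrac{v-1}{v}\lambda+\tfrac{\lambda^2}{ma}$. So the single minimizing character is used only at the bounding stage, not to set up the identity.

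For the final inequality you are working too hard. Since $d=\gcd(\lambda,a)$ divides $a$, we have $d\le a$; and $x$ is a positive integer by Corollary~\ref{CorBasic}(c). Hence $d/(ax)\le 1$ immediately, which is all the paper uses (it actually proves $\le 2\lambda$; the strict inequality in the statement comes from the earlier strict bound on $k$). Your reduction to $\lambda+a<a(m-2)$ is correct algebra but only recovers $\le$ (via $m-\ell_\chi-1\ge1$) and does not need the case split on $\lambda$ versus $2a$.
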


\begin{proof}
The lower bound on $k$ follows from Proposition~\ref{prop-nonexistence} (c). Consider the coefficient of $1_G$ in $\sum_{i=1}^m D_iD_i^{(-1)}$. By Fourier inversion formula, we have
\[ mk= \frac{1}{v} ( mk^2 + \sum_{ \chi \in G_0^*} \sum_{i=1}^m | \chi(D_i)|^2 + \sum_{ \chi \in G_N^*} \sum_{i=1}^m | \chi(D_i)|^2 ).\]
For any $ \chi\in G_N^*$, it follows from Corollary \ref{CorBasic}  that
\[ (\ell_{ \chi}-1)a_{ \chi}+(m-\ell_{ \chi}-1)\frac{\lambda^2}{a_{ \chi}}=
(m-2) \frac{\lambda }{\lambda+a_{ \chi}}a_{ \chi}+(m-2) \frac{a_{ \chi}}{\lambda+a_{ \chi}}
\frac{\lambda^2}{a_{ \chi}}=(m-2) \lambda.\] Hence,
\begin{align*}
 \sum_{i=1}^m | \chi(D_i)|^2  & =  (m - \ell_{ \chi}-1) \frac{\lambda^2}{a_{ \chi}} + (\ell_{ \chi}-1) a_{ \chi} + (\frac{\lambda^2}{a_{ \chi}} +a_{ \chi}) \\
 & = (m-2)\lambda+ (\frac{\lambda^2}{a_{ \chi}} +a_{ \chi})=m\lambda + \frac{(\lambda-a_{ \chi})^2}{a_{ \chi}} .\end{align*}
On the other  hand, if $ \chi\in G_0^*$, then $a_{ \chi}=\lambda$, hence
\[ \sum_{i=1}^m | \chi(D_i)|^2 =  m \lambda + \frac{(\lambda-a_{ \chi})^2}{a_{ \chi}}.\]
As $a\leq a_{ \chi}$ for all nontrivial characters $ \chi$, we conclude
\begin{equation}\label{EqInequ}
k = \frac{1}{v} [k^2 + (v-1)\lambda+ \frac{1}{m}\sum_{\chi_0\neq  \chi \in G^*} \frac{(\lambda -a_{ \chi})^2}{a_{ \chi}}] < \frac{\lambda}{m-1} + \frac{v-1}{v} \lambda + \frac{(\lambda-a)^2}{ma}. \end{equation}
To simplify further, we get
\[ k< \frac{v-1}{v} \lambda + \frac{\lambda }{m-1} - \frac{2\lambda }{m}+  \frac{\lambda^2+a^2}{ma}
= \frac{v-1}{v} \lambda + \frac{\lambda^2}{ma} -[ \frac{(m-2)\lambda}{m(m-1)} -  \frac{a}{m}].\]
In view of Corollary \ref{CorBasic} (a) and (b), we see that $\lambda/a\geq (m-1)/(m-3)$. Thus
\[ \frac{(m-2)\lambda}{m(m-1)} -  \frac{a}{m} >0.\]
Therefore,
\[ k < \frac{v-1}{v} \lambda +  \frac{\lambda^2}{ma}<(1+\frac{d}{ax})\lambda \leq  2\lambda\]
as by assumption $m-2=x(\lambda+a)/d$.  \end{proof}

\medskip

Theorem \ref{ThmBound} gives a useful bound relating $k$ and $\lambda$. 
Recall that in \cite[Remark 5.17]{JL},  there are only $70$ plausible parameter sets of $(v,m,k,\lambda)$-SEDFs with $v \leq 10^4$ and $m>2$. We will illustrate in the last section how Theorem \ref{ThmBound} can be applied to rule out 44 cases listed there. 
We believe the above theorem is crucial in proving nonexistence results on SEDFs.
For example, \cite[Corollary 5.6]{JL} is an easy consequence of Theorem \ref{ThmBound}. As another illustration, we record an immediate consequence:

\begin{cor}\label{CorLambdaPrime}
There exists no $(v,m,k,\lambda)$-SEDF with $m>2$ when $\lambda$ is a prime. 
\end{cor}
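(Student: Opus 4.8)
The plan is to argue by contradiction, pitting an upper bound on $m$ extracted from Theorem~\ref{ThmBound} against a lower bound on $m$ obtained from two independent divisibility conditions. So I would suppose that a $(v,m,k,\lambda)$-SEDF with $m>2$ exists and that $\lambda$ is prime, and aim to show these two bounds on $m$ are incompatible. The key insight I expect to need is that primality of $\lambda$ should be exploited \emph{twice}, on two different fronts, to produce two congruences for $m$.

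First I would use the bound $\lambda<k<2\lambda$ from Theorem~\ref{ThmBound}: since $\lambda$ is prime, no multiple of $\lambda$ lies in the open interval $(\lambda,2\lambda)$, so $\gcd(k,\lambda)=1$. Plugging this into the basic identity \eqref{EqBasic}, namely $(m-1)k^2=\lambda(v-1)$, forces $\lambda\mid(m-1)$, i.e. $m\equiv 1\pmod{\lambda}$. Independently, Result~\ref{ResGn} guarantees $G_N^*\neq\emptyset$ and tells us that for $\chi\in G_N^*$ the integer $a_\chi$ divides $\lambda^2$ and satisfies $a_\chi<\lambda$; as $\lambda$ is prime, its only divisor below $\lambda$ is $1$, so $a_\chi=1$ and hence $a=1$. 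Corollary~\ref{CorBasic}(c), which asserts $(\lambda+a_\chi)\mid(m-2)\gcd(\lambda,a_\chi)$, then reads $(\lambda+1)\mid(m-2)$, i.e. $m\equiv 2\pmod{\lambda+1}$.

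Next I would combine these two congruences. Since $\gcd(\lambda,\lambda+1)=1$, the Chinese Remainder Theorem determines $m$ modulo $\lambda(\lambda+1)$, and a direct check shows $m\equiv\lambda^2+1\pmod{\lambda(\lambda+1)}$ (indeed $\lambda^2+1\equiv 1\pmod{\lambda}$ and $\lambda^2+1\equiv 2\pmod{\lambda+1}$). As $\lambda^2+1$ already lies in $[1,\lambda(\lambda+1))$, this yields $m\ge\lambda^2+1$. Finally I would invoke the first inequality of Theorem~\ref{ThmBound}: with $a=1$ and $\frac{v-1}{v}<1$ it gives $k<\frac{v-1}{v}\lambda+\frac{\lambda^2}{m}<\lambda+\frac{\lambda^2}{m}$. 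Because $k$ and $\lambda$ are integers with $k>\lambda$, we have $k-\lambda\ge 1$, whence $1\le k-\lambda<\frac{\lambda^2}{m}$ and therefore $m<\lambda^2$. This contradicts $m\ge\lambda^2+1$, completing the proof.

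I expect the main obstacle to be conceptual rather than computational: recognizing that the analytic bound of Theorem~\ref{ThmBound} should be converted into an \emph{upper} bound on $m$ (forcing $k$ to sit just above $\lambda$ and hence $m<\lambda^2$), and that this must be contradicted by a purely arithmetic \emph{lower} bound on $m$ rather than by attacking the $k$-inequality head-on. The two divisibility facts in isolation are routine consequences of earlier results; the trick is to notice that primality of $\lambda$ feeds them through two distinct channels — the bound $k<2\lambda$ via $\gcd(k,\lambda)=1$, and $a_\chi\mid\lambda^2$ via $a=1$ — so that the combined congruence pushes $m$ just past $\lambda^2$, exactly into the range the inequality forbids.
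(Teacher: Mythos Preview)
Your proof is correct and follows essentially the same approach as the paper: both deduce $\lambda\mid(m-1)$ from the basic equation together with $k<2\lambda$, both use $a_\chi\mid\lambda^2$ with $a_\chi<\lambda$ to force $a=1$ and hence $(\lambda+1)\mid(m-2)$, and both combine these to obtain $m\ge\lambda^2+1$, which is then contradicted by the sharp inequality in Theorem~\ref{ThmBound}. The only cosmetic differences are that the paper eliminates $\lambda\mid k$ by a direct case split rather than via $\gcd(k,\lambda)=1$, and phrases the final contradiction as $\lambda<k<\lambda+1$ rather than as $m<\lambda^2$.
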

\begin{proof} 
In view of Equation \eqref{EqBasic} and the assumption that $\lambda$ is a prime, we
see that either
\[ \lambda|k \mbox{ or } \lambda|(m-1). \]
If $\lambda|k$, then as $k>\lambda$, $k\geq 2\lambda$. This contradicts
Theorem \ref{ThmBound}. Thus, $\lambda|(m-1)$.

Let $a$ be as defined before. By Result \ref{ResGn}, $a < \lambda$ and $a|\lambda^2$.
So $a=1$. By Corollary \ref{CorBasic}, $m-2 =s (\lambda + 1)$ for some integer $s$.
Recall that $\lambda|(m-1)$, hence  $-1 \equiv s \bmod \lambda$. So, $s=(t\lambda-1)$ and
\[ m-2\geq s(t\lambda-1)(\lambda+1)\geq \lambda^2-1.\]
Consequently, $m \geq \lambda^2 + 1$. Theorem \ref{ThmBound} implies that 
\[ \lambda< k < \frac{v-1}{v} \lambda + \frac{\lambda^2}{\lambda^2+1}<\lambda+1.\]
This is impossible. 
\end{proof}

\begin{rem}
It follows from the above corollary and Proposition 1 (b)  that $\lambda\geq 4$ and $k\geq 5$. From now on, we may assume $m\geq 5$, $k\geq 5$ and $\lambda\geq 4$. 
\end{rem}

\section{Characters of prime power order}\label{Sec4}

We will continue using the notations defined before and assume $\{D_1,\ldots, D_m\}$ is a $(v,m,k,\lambda)$-SEDF on $G$. To understand the relation between the prime divisors of the group order $v$ and other parameters of an SEDF, we study characters of prime power order in $G^*$.

\begin{lem}\label{LemPpower}
Let $p$ be a prime and $H$ be a cyclic subgroup with $|H|=p^t$. Suppose $\chi$ is a character of order $p^t$.
\begin{itemize}
\item [(a)] If $A=\sum a_g g\in \Z[H]$ and $\chi(A)=0$, then $A=PX$ for some $X\in \Z[G]$. In particular, 
$p|(\sum a_g)$. 
\item [(b)] If $Y=\{0\leq i\leq t-1: \chi^{p^i}(A)=0\}$, then $p^{|Y|}|(\sum a_g)$.
\end{itemize}
\end{lem}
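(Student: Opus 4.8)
The plan is to prove part (a) first, using the structure of the kernel of a character of maximal prime-power order, and then derive part (b) by an inductive application of part (a) combined with the natural projection result (the last Result recorded in Section~2).

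For part (a), the key observation is that $\chi$ is a character of the cyclic group $H$ of order $p^t$, and $A \in \Z[H]$. Since $\chi$ has order $p^t$, it is a faithful (injective) character on $H$, so $\chi(h)$ ranges over the full set of $p^t$-th roots of unity as $h$ runs over $H$. First I would identify $\Z[H]$ with $\Z[y]/(y^{p^t}-1)$, where $y$ is a generator of $H$ and $\chi(y) = \zeta_{p^t}$ is a primitive $p^t$-th root of unity. The condition $\chi(A)=0$ says that $A(\zeta_{p^t})=0$, i.e. the polynomial representing $A$ vanishes at a primitive $p^t$-th root of unity, hence is divisible by the $p^t$-th cyclotomic polynomial $\Phi_{p^t}(y)$. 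The crucial arithmetic fact is that modulo $p$ we have $\Phi_{p^t}(y) \equiv (y-1)^{\varphi(p^t)} \pmod{p}$, and more usefully, the ideal generated by $\Phi_{p^t}(y)$ in $\Z[H]$ contains the group-ring element $P := \sum_{h \in P_0} h$ where $P_0$ is the subgroup of order $p$ (this is the standard fact that $\ker\chi$-type sums capture divisibility). Concretely, the statement ``$A = PX$ for some $X$'' should be read with $P$ denoting the subgroup of $H$ of order $p$ (viewed as a group-ring element $\sum_{h\in P_0} h$); I would show that $\chi(A)=0$ forces $A$ to lie in the ideal generated by $P$ in $\Z[H]$. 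The ``in particular'' clause is then immediate: applying the principal character $\chi_0$ to $A=PX$ gives $\sum a_g = \chi_0(P)\chi_0(X) = p\,\chi_0(X)$, so $p \mid \sum a_g$.

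For part (b), I would argue by induction on $|Y|$, peeling off one factor of $p$ at a time. The set $Y$ records which of the characters $\chi, \chi^p, \chi^{p^2}, \dots, \chi^{p^{t-1}}$ annihilate $A$; note $\chi^{p^i}$ has order $p^{t-i}$. The idea is that each index $i \in Y$ contributes an independent factor of $p$ to the coefficient sum. The cleanest route is to use the projection Result from Section~2: the characters $\chi^{p^i}$ for $0 \le i \le t-1$ all factor through successive quotients of $H$, and the kernels $\ker(\chi^{p^i})$ form a chain of nested subgroups. For each $i \in Y$, the vanishing $\chi^{p^i}(A)=0$ lets me apply part (a) at the appropriate level (after projecting to the relevant quotient) to extract one factor of $P$, and hence one factor of $p$ in the coefficient sum, while the remaining vanishing conditions survive the projection. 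Iterating over all elements of $Y$ yields $p^{|Y|} \mid \sum a_g$.

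The main obstacle I anticipate is making the iteration in part (b) rigorous: the factors of $p$ extracted at different levels $i \in Y$ must be shown to be genuinely independent, i.e. that extracting $P$ (the order-$p$ subgroup sum) once does not ``use up'' the divisibility available from a different character $\chi^{p^j}$. The careful bookkeeping involves checking that after writing $A = PX$, the remaining conditions $\chi^{p^j}(A)=0$ for $j \in Y$, $j \ne i$, translate into conditions on $X$ (or on a projection of $A$) that again fall under the scope of part (a) at the next level. I expect the cleanest formulation to pass to the quotient group $H/P_0$ at each stage, where the surviving characters again have maximal prime-power order on the smaller cyclic group, so that part (a) applies verbatim and the induction closes with the coefficient-sum divisibility multiplying correctly.
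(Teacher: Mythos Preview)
Your proposal is correct and essentially matches the paper's approach. For (a) the paper simply cites \cite{LL} (your cyclotomic-polynomial argument is the standard proof); for (b) the paper likewise inducts on $|Y|$, taking $s=\min Y$, projecting to $H/N$ with $N=\ker(\langle\chi^{p^s}\rangle)$, applying (a) to write $\pi(A)=PZ$, and verifying that the remaining conditions $\chi^{p^i}(A)=0$ for $i\in Y$ with $i>s$ descend to $Z$ so that the induction hypothesis applies with $|Y'|=|Y|-1$.
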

\begin{proof} (a) is known. A proof can be found in \cite[Theorem 3.3]{LL}. 
To prove (b), we apply induction on $|Y|$. We are done if $|Y|=0$. 
We may assume $|Y|\geq 1$. 

Let $s=\min \{i: i\in Y\}$ and $N=\ker(\langle \chi^{p^{s}} \rangle)$. 
Let $\pi:H\rightarrow H/N$ be the natural projection. 
We may identify $(H/N)^*=\langle \chi^{p^s} \rangle$. 
Since $\chi^{p^s}(A)=0$, it follows from (a) that $\pi(A)=P Z$ where $P$ is the cyclic subgroup of order $p$ in $H/N$ and $X\in \Z[H/N]$. Write $Z=\sum b_z z\in \Z[H/N]$. 
Clearly, $\sum a_g=p \sum b_z$. 
For any $i\in Y$ with $i>s$, $\chi^{p^i}(A)=p\chi^{p^i}(Z)$. Hence, 
\[ Y'=\{ 0\leq i \leq t-s-1: \chi^{p^{s+i}}(Z)=0\}=\{ i-s: i >s  \mbox{ and } i \in Y\}. \]
Note that $|Y'|=|Y|-1$. Hence by induction, $p^{|Y'|} | (\sum b_z)$. Since $|Y'|=|Y|-1$, we conclude $p^{|Y|}| (\sum a_g)$. 
\end{proof}

\medskip

We now consider characters of prime power order in $G^*$. When the character is in $G_0^*$, we have the following immediate consequence of Lemma~\ref{LemPpower}.

\begin{lem}\label{LemDivChar1}
Suppose $ \chi\in G_0^*$ has order $p^{\alpha}$ for some prime $p$.
Let  $Y=\{ 0\leq t\leq \alpha-1 \ |\ \chi^{p^t}\in G_0^*\}$. Then $|Y|\geq 1$ and $p^{|Y|}|(mk)$.
\end{lem}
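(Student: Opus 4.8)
The plan is to derive this as a direct application of Lemma~\ref{LemPpower}(b) together with the structural information about $G_0^*$ furnished by Result~\ref{ResG0}. The hypothesis is that $\chi \in G_0^*$ has order $p^\alpha$, and $Y$ records which of the powers $\chi^{p^t}$ (for $0 \le t \le \alpha-1$) again lie in $G_0^*$. First I would note that $Y$ is nonempty: since $\chi = \chi^{p^0} \in G_0^*$ by assumption, the index $t=0$ belongs to $Y$, giving $|Y| \ge 1$ immediately. The real content is the divisibility $p^{|Y|} \mid mk$.

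The key idea is to apply the lemma not to a single $D_i$ but to the full sum $D = \sum_{i=1}^m D_i$. Observe that $D = \sum_g a_g g$ has coefficient sum $\sum_g a_g = mk$, since each $D_i$ is a $k$-subset and the $D_i$ are disjoint, so $D$ has exactly $mk$ elements counted with multiplicity. By the very definition of $G_0^*$, we have $\psi(D) = 0$ precisely when $\psi \in G_0^* \cup \{?\}$; more carefully, $\chi^{p^t}(D) = 0$ exactly when $\chi^{p^t} \in G_0^*$ (one must check $\chi^{p^t}$ is nonprincipal, but for $t < \alpha$ the order of $\chi^{p^t}$ is $p^{\alpha - t} > 1$, so it is indeed nonprincipal). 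Hence the set $\{\, 0 \le t \le \alpha-1 : \chi^{p^t}(D) = 0 \,\}$ coincides exactly with $Y$ as defined in the statement.

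The next step is to pass to a cyclic subgroup so that Lemma~\ref{LemPpower} applies. Since $\chi$ has order $p^\alpha$, its kernel $N = \ker(\chi)$ has index $p^\alpha$, and $\chi$ factors through the cyclic quotient $H := G/N \cong C_{p^\alpha}$. Letting $\pi : G \to H$ be the natural projection, Result~7 (the last result recorded, on characters factoring through quotients) gives $\chi^{p^t}(D) = \chi^{p^t}(\pi(D))$ for every $t$, where $\pi(D) = \sum_g a_g\, \pi(g) \in \Z[H]$ still has coefficient sum $mk$. I would then apply Lemma~\ref{LemPpower}(b) to $A = \pi(D) \in \Z[H]$ with the generator $\chi$ of $H^*$: the set $Y$ defined there is exactly our $Y$, so the lemma yields $p^{|Y|} \mid mk$.

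The main obstacle, such as it is, lies in the bookkeeping that identifies the $Y$ of the present lemma with the $Y$ of Lemma~\ref{LemPpower}(b): one must confirm that ``$\chi^{p^t} \in G_0^*$'' and ``$\chi^{p^t}(\pi(D)) = 0$'' are the same condition for each relevant $t$, which reduces to checking that each $\chi^{p^t}$ with $0 \le t \le \alpha - 1$ is genuinely a nonprincipal character of $G$ (so that membership in $G_0^*$ is equivalent to annihilating $D$) and that it descends correctly to $H$. Once this translation is in place, the divisibility is immediate from part (b), and no further computation is needed.
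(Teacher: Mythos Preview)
Your proposal is correct and follows exactly the argument the paper has in mind: the paper simply states this lemma as an ``immediate consequence of Lemma~\ref{LemPpower}'' without further proof, and your reduction---project $D$ onto the cyclic quotient $G/\ker(\chi)$ and apply Lemma~\ref{LemPpower}(b) to $\pi(D)$, whose coefficient sum is $mk$---is precisely that immediate consequence. The bookkeeping you flag (that $\chi^{p^t}\in G_0^*$ is equivalent to $\chi^{p^t}(\pi(D))=0$ for $0\le t\le\alpha-1$) is straightforward, so nothing is missing.
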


When the character is in $G_N^*$, we have more detailed information.

\begin{lem}\label{LemCharPrime}
Let $ \chi$ be a character of prime power order $p^{\alpha}$ in $ G_N^*$. Then, we have the following:
\begin{itemize}
\item [(a)] $p| (m-2)k$.
\item [(b)] $p|(k^2 + \lambda)$, $p|(k^2 - a_{ \chi})$ and $p|(\lambda + a_{ \chi})$.
\item [(c)] If $p\nmid k$, then $p \nmid \lambda$, $p | [(\lambda + a_{ \chi})/\gcd(\lambda, a_{ \chi})]$ and $p\leq \lambda+1$. 
\item [(d)] If $\chi^{p^t}\in  G_N^*$ for some $t>0$, then $p^2| (m-2)k$.
\end{itemize}
\end{lem}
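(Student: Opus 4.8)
The plan is to reduce every assertion to a divisibility statement about \emph{coefficient sums}, which is precisely what Lemma~\ref{LemPpower} controls for a character of prime power order. The recurring device is this: if $B\in\Z[G]$ has $\chi(B)$ equal to a rational integer $c$, then $\chi(B-c\cdot 1_G)=0$, and passing to the quotient $G/\ker(\chi)\cong C_{p^\alpha}$ (on which $\chi$ has full order $p^\alpha$, and where the projection preserves coefficient sums) Lemma~\ref{LemPpower}(a) yields $p\mid\chi_0(B-c\cdot 1_G)=\chi_0(B)-c$. Thus any group ring element with integral $\chi$-value feeds directly into a congruence modulo $p$.

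For (a), Result~\ref{ResGn}(c) guarantees indices with $\chi(D_i)\neq\chi(D_j)$, whence Lemma~\ref{LemCharBas} gives $\chi(D-D_i-D_j)=0$; since $\chi_0(D-D_i-D_j)=(m-2)k$, the device yields $p\mid(m-2)k$. For (b) I apply the device twice. Taking $B=D_iD_j^{(-1)}$ with $\chi(D_i)\neq\chi(D_j)$, Lemma~\ref{LemCharBas} gives $\chi(B)=-\lambda$ while $\chi_0(B)=k^2$, so $p\mid k^2+\lambda$. Taking $B=D_iD_i^{(-1)}$ for an index realizing $|\chi(D_i)|^2=a_\chi$, we have $\chi(B)=a_\chi$ and $\chi_0(B)=k^2$, so $p\mid k^2-a_\chi$. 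Subtracting the two congruences gives $p\mid\lambda+a_\chi$.

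Part (c) is then pure arithmetic on (b). If $p\nmid k$, then $p\mid k^2+\lambda$ forces $p\nmid\lambda$ (else $p\mid k^2$), and since $p\mid\lambda+a_\chi$ it forces $p\nmid a_\chi$ as well; hence $p\nmid d:=\gcd(\lambda,a_\chi)$ and $p\mid(\lambda+a_\chi)/d$. For the bound, write $\lambda=d\lambda'$ and $a_\chi=da'$ with $\gcd(\lambda',a')=1$. Result~\ref{ResGn}(d) ($a_\chi\mid d^2$) gives $a'\mid d$, and $a_\chi<\lambda$ gives $a'<\lambda'$, so that $p\le\lambda'+a'$. When $d=1$ we get $a_\chi\mid 1$, hence $a_\chi=1$ and $p\mid\lambda+1$; when $d\ge 2$ we have $\lambda'+a'<2\lambda'=2\lambda/d\le\lambda$. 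Either way $p\le\lambda+1$.

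For (d), the two characters $\chi$ and $\chi^{p^\beta}$ (writing $\beta$ for the given $t>0$; since $\chi^{p^\beta}\in G_N^*$ is nonprincipal of order $p^{\alpha-\beta}$ we have $0<\beta<\alpha$) both lie in $G_N^*$. Corollary~\ref{LemSeparate} then produces a \emph{single} pair $i,j$ with $\chi(D-D_i-D_j)=\chi^{p^\beta}(D-D_i-D_j)=0$. Setting $B=D-D_i-D_j$ and projecting to $G/\ker(\chi)\cong C_{p^\alpha}$ (noting $\ker(\chi)\subseteq\ker(\chi^{p^\beta})$, so $\chi^{p^\beta}$ descends), the index set $Y$ of Lemma~\ref{LemPpower}(b) contains both $0$ and $\beta$, so $|Y|\ge 2$ and $p^2\mid p^{|Y|}\mid\chi_0(B)=(m-2)k$. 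The main obstacle throughout is careful bookkeeping rather than any deep idea: one must verify that each character value invoked ($-\lambda$ and $a_\chi$) is a genuine rational integer so that subtracting $c\cdot 1_G$ keeps us in $\Z[G]$, confirm that the projection to $C_{p^\alpha}$ preserves coefficient sums and legitimately carries along $\chi^{p^\beta}$ in (d), and handle the small case split ($d=1$ versus $d\ge 2$) needed for the bound $p\le\lambda+1$ in (c).
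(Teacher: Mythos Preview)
Your proof is correct and follows essentially the same architecture as the paper: project to the cyclic quotient $G/\ker(\chi)\cong C_{p^\alpha}$ and feed integer character values into Lemma~\ref{LemPpower}. Parts (a), (c), and (d) match the paper's argument almost line for line.

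There is one genuine (and pleasant) difference in part~(b). To obtain $p\mid(k^2+\lambda)$, the paper does \emph{not} use character theory at all: it invokes the basic counting equation $(m-1)k^2=\lambda(v-1)$ together with $p\mid v$, and then must split into the cases $p\mid k$ and $p\nmid k$ (using part~(a) in the latter case to conclude $p\nmid(m-1)$). You instead exploit the second conclusion of Lemma~\ref{LemCharBas}, namely $\chi(D_iD_j^{(-1)})=-\lambda$, and apply your coefficient-sum device to $B=D_iD_j^{(-1)}$ to get $p\mid k^2-(-\lambda)$ directly. This is cleaner: it avoids the case split entirely and keeps the whole proof of (b) within a single uniform mechanism. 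The paper's route, on the other hand, makes visible the role of the global parameter relation~\eqref{EqBasic}, which is conceptually useful elsewhere but unnecessary here.
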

\begin{proof}
Let $H=\ker( \langle \chi\rangle)$. Then $ \chi$ induces a character of order $p^{\alpha}$ on $G/H$. By abuse of notation, we denote the induced character by $ \chi$ as well. Let $\pi: G \rightarrow G/H$ be the natural projection.
By Result \ref{ResGn}, there exist $1 \leq i,j \leq m$ such that
\[ | \chi(D_i)|^2 = a_{ \chi} \ \mbox{ and } \  | \chi(D_j)|^2 = \frac{\lambda^2}{a_{ \chi}}.\]
By Lemma \ref{LemCharBas}, $ \chi(\pi(D-D_i-D_j))=0$. By Lemma \ref{LemPpower}, $p|[(m-2)k]$ as $\chi_0(D-D_i-D_j)=(m-2)k$.

For (b), note that $ \chi(\pi(D_i D_i^{(-1)})-a_{ \chi}1_G)=0$. That means
$\pi(D_i D_i^{(-1)}-a_{ \chi}1_G) =PY$ where $Y \in \mathbb{Z}[G/H]$. By applying the principal character on the equation, we get $p| (k^2-a_{ \chi})$. 

If $p|k$, then $p|a_{ \chi}$. But as $a_{ \chi}|\lambda^2$, so $p|\lambda$ as well. Therefore, (b) holds if $p|k$. We may assume $p\nmid k$. By (a), $p|(m-2)$ and therefore, $p\nmid (m-1)$.
By Equation \eqref{EqBasic}, $p \nmid \lambda$.
Using the same equation, we see that $k^2\equiv -\lambda \bmod p$ as $p|v$. Thus $p|(k^2+\lambda)$.
Recall that $p|(k^2-a_{ \chi})$, we obtain $p|(\lambda+a_{ \chi})$. 
Let $d=\gcd(\lambda, a_{\chi})$. If $d\geq 2$, then $dp| (\lambda +a_{\chi})$ and 
$2\lambda > dp$. Thus $p< \lambda$. If $d=1$, then by Result 5 (a), $a_\chi=1$. Hence, $p\leq (\lambda+1)$. This finishes proving (b) and (c). 

For (d), we conclude from Corollary \ref{LemSeparate} that there exist $i,j$ such that
\[ \chi(\pi(D-D_i-D_j))=\chi^{p^t}(\pi(D-D_i-D_j))=0.\]
Clearly, (d) follows from Lemma \ref{LemPpower}.
\end{proof}

\medskip

Combining Lemmas~\ref{LemDivChar1} and \ref{LemCharPrime}, we know that a character $\chi$ of order $p$ is in $G_0^*$ only if $p \mid mk$ and in $G_N^*$ only if $p \mid (m-2)k$. Therefore, for $p$ not dividing $k$, we have either $p \mid m$ or $p \mid (m-2)$. Indeed, we can derive stronger divisibility result as follows.

\begin{cor}\label{CorG0}
Suppose ${\cal P}^*$ is the Sylow $p$-subgroup of $G^*$. If $p$ is odd, $p\nmid k$ and $p|m$, then ${\cal P}^* \subset G_0^* \cup \{\chi_0\}$ and $|{\cal P}^*|$ divides $m$.
\end{cor}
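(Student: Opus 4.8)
The plan is to treat the two assertions separately, deducing the first from Lemma~\ref{LemCharPrime}(a) and the second by realizing ${\cal P}^*$ as the full character group of a suitable quotient and projecting $D$ there.

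For the inclusion ${\cal P}^* \subset G_0^* \cup \{\chi_0\}$, I would take any nonprincipal $\chi \in {\cal P}^*$ and note that, since ${\cal P}^*$ is the Sylow $p$-subgroup of $G^*$, $\chi$ has order $p^\alpha$ for some $\alpha \geq 1$. Because $G^* = \{\chi_0\} \cup G_0^* \cup G_N^*$, it suffices to rule out $\chi \in G_N^*$. But if $\chi \in G_N^*$, then Lemma~\ref{LemCharPrime}(a) gives $p \mid (m-2)k$; as $p \nmid k$ this forces $p \mid (m-2)$, and combined with $p \mid m$ it yields $p \mid 2$, contradicting that $p$ is odd. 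Hence every nonprincipal $\chi \in {\cal P}^*$ lies in $G_0^*$, which is exactly the first claim.

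For the divisibility $|{\cal P}^*| \mid m$, the idea is to use that \emph{all} nonprincipal characters of ${\cal P}^*$ now kill $D$. I would set $N = \bigcap_{\chi \in {\cal P}^*} \ker(\chi)$ and let $\pi : G \to G/N$ be the natural projection; by the induced-character result recorded at the end of Section~\ref{Sec2}, ${\cal P}^*$ is precisely $(G/N)^*$ and $\chi(D) = \chi(\pi(D))$ for every $\chi \in {\cal P}^*$. By the first part, every nonprincipal character of $G/N$ vanishes on $\pi(D)$, so the structure result following the Fourier inversion formula (a group ring element annihilated by all nonprincipal characters is an integer multiple of the whole group) gives $\pi(D) = x \cdot (G/N)$ for some integer $x$. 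Comparing coefficient sums by applying $\chi_0$ yields $mk = x\,|G/N| = x\,|{\cal P}^*|$, so $|{\cal P}^*| \mid mk$. Since $|{\cal P}^*|$ is a power of $p$ and $p \nmid k$, we have $\gcd(|{\cal P}^*|, k) = 1$, whence $|{\cal P}^*| \mid m$.

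I expect the only genuine subtlety to lie in this second part. The tempting shortcut is to pick a single character of maximal order in ${\cal P}^*$ and invoke Lemma~\ref{LemDivChar1}, but that only shows the \emph{exponent} of ${\cal P}^*$ divides $m$, which is insufficient when ${\cal P}^*$ is noncyclic. Passing instead to the quotient $G/N$ and exploiting the vanishing of $\pi(D)$ on \emph{all} nonprincipal characters simultaneously is precisely what upgrades ``exponent divides $m$'' to ``$|{\cal P}^*|$ divides $m$.'' Note also that $p$ being odd is used only in the first part, while $p \nmid k$ is used in both.
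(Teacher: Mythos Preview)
Your proof is correct and follows essentially the same route as the paper: both arguments rule out $G_N^*$ via Lemma~\ref{LemCharPrime}(a) together with the parity obstruction $p\mid m,\ p\mid(m-2)\Rightarrow p\mid 2$, then pass to the quotient $G/N$ with $N=\bigcap_{\chi\in{\cal P}^*}\ker\chi$, identify ${\cal P}^*$ with $(G/N)^*$, and use the ``all nonprincipal characters vanish'' result to write $\pi(D)=x\,(G/N)$. Your write-up is in fact slightly more explicit than the paper's in spelling out the final step $mk=x\,|{\cal P}^*|$ and $\gcd(|{\cal P}^*|,k)=1$, and your remark about why a single maximal-order character (via Lemma~\ref{LemDivChar1}) only yields the exponent rather than the full order of ${\cal P}^*$ is a nice clarification.
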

\begin{proof} Since $p|m$ and $p$ is odd, $p\nmid (m-2)$. As $p\nmid k$ also, thus $p\nmid (m-2)k$. 
In view of Lemma \ref{LemCharPrime} (a), we conclude that for
any nonprincipal $ \chi \in {\cal P}^*$, $\chi\in G_0^*$. We define 
\[ N=\bigcap_{ \chi\in {\cal P}^*} \ker( \chi). \]
Let $\pi: G\rightarrow G/N$ be the natural projection. Then ${\cal P}^*$ may be viewed as $(G/N)^*$.
As ${\cal P}^*\subset G_0^*\cup \{\chi_0\}$, $ \chi(\pi(D))=0$ for all nonprincipal $ \chi\in (G/N)^*$. Therefore, by Result 10, $\pi(D)=x(G/N)$ and $|{\cal P}^*|$ divides $m$.
\end{proof}

\medskip

It does not seem possible to extend Corollary \ref{CorG0} to the case $p|(m-2)$. We only manage to prove a much weaker result. Before proving that, we need a lemma.

\begin{lem}\label{LemPower}
Let $N$ be a subgroup of $G$ such that $G/N$ is a cyclic subgroup of order $p^{\alpha}$.
Suppose $p$ is a prime and $\pi: G \rightarrow G/N$ is the natural projection.
If $ \chi\in G_N^*$ is of order $p$ and $N \subset \ker ( \chi)$, then
$p^{\alpha}|(\lambda^2-a_{ \chi}^2)$.
\end{lem}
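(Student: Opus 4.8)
The plan is to pass to the quotient $U = G/N \cong C_{p^\alpha}$ and exploit the fact that, once projected into $\Z[U]$, each product $D_iD_i^{(-1)}$ is forced by Corollary~\ref{CorDiDi} to be constant on the orbits of $U$. Since $N \subset \ker(\chi)$, the character $\chi$ descends to a character $\bar\chi$ of $U$ of order $p$ with $\bar\chi \circ \pi = \chi$. Because $\chi \in G_N^*$, Result~\ref{ResGn} guarantees indices $i,j$ with $|\chi(D_i)|^2 = a_\chi$ and $|\chi(D_j)|^2 = \lambda^2/a_\chi$ (note $a_\chi < \lambda$, so these values are genuinely different, and both are attained by Result~\ref{ResGn}(c)). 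I will prove the two divisibilities $p^\alpha \mid (k^2 - a_\chi)$ and $p^\alpha \mid (k^2 - \lambda^2/a_\chi)$ and then combine them.

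First I would establish $p^\alpha \mid (k^2 - a_\chi)$. By Corollary~\ref{CorDiDi}, $D_iD_i^{(-1)}$ is a $\Z$-combination of orbits $\omega(g)$; applying $\pi$ and noting that every $t$ coprime to $|G|$ is coprime to $p$, the image $\pi(D_iD_i^{(-1)})$ is a $\Z$-combination of the orbit sums of $U$. In $C_{p^\alpha}$ these orbits are precisely the sets of elements of a fixed order $p^\beta$; writing $[p^\beta]$ for the sum in $\Z[U]$ of the elements of order exactly $p^\beta$, we get $\pi(D_iD_i^{(-1)}) = \sum_{\beta=0}^{\alpha} c_\beta [p^\beta]$ with $c_\beta \in \Z$. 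The crux is the Ramanujan-type evaluation of $\bar\chi$: since $\bar\chi$ has order $p$ its kernel is the index-$p$ subgroup, so $\bar\chi([p^\beta]) = \varphi(p^\beta) = \chi_0([p^\beta])$ for every $\beta \le \alpha - 1$, while $\bar\chi([p^\alpha]) = -p^{\alpha-1}$. Evaluating the displayed expression at $\chi_0$ gives $k^2$ and at $\bar\chi$ gives $a_\chi$; subtracting, all terms with $\beta \le \alpha - 1$ cancel and one is left with $k^2 - a_\chi = c_\alpha(\varphi(p^\alpha) + p^{\alpha-1}) = c_\alpha p^\alpha$. The identical argument applied to $D_jD_j^{(-1)}$ yields $p^\alpha \mid (k^2 - \lambda^2/a_\chi)$.

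To finish, I would subtract the two congruences to obtain $p^\alpha \mid (a_\chi - \lambda^2/a_\chi)$, which is a divisibility of integers because $\lambda^2/a_\chi \in \Z$ by Result~\ref{ResGn}. Multiplying by the integer $a_\chi$ gives $p^\alpha \mid (a_\chi^2 - \lambda^2)$, as required. I expect the main obstacle to be the orbit bookkeeping in the two middle steps: first, checking that projecting an orbit of $G$ really lands (with integer multiplicity) inside a single orbit class $[p^\beta]$ of $U$, so that Corollary~\ref{CorDiDi} transfers cleanly to $\Z[U]$; and second, computing $\bar\chi([p^\alpha])$ correctly, since this boundary term is exactly what produces the extra factor $p^{\alpha-1}$ that upgrades the naive $p \mid (k^2-a_\chi)$ coming from Lemma~\ref{LemCharPrime} to the full $p^\alpha$.
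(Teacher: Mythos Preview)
Your proof is correct and proceeds by a route dual to the paper's. You work on the group side: decompose $\pi(D_iD_i^{(-1)})$ into the orbit sums $[p^\beta]$ of $U=C_{p^\alpha}$ and evaluate $\chi_0$ and $\bar\chi$ on each, observing that they agree on every $[p^\beta]$ with $\beta<\alpha$ and differ by exactly $p^\alpha$ on $[p^\alpha]$. The paper instead works on the character side: it applies Fourier inversion to the coefficient of a generator $g$ in $\pi(D_iD_i^{(-1)}-D_jD_j^{(-1)})$, groups the characters of $U$ by their exact order $p^i$, and uses that the Ramanujan-type sum $\sum_{\ord\phi=p^i}\phi(g^{-1})$ vanishes for $i\ge 2$ and equals $-1$ for $i=1$. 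Both routes arrive at the same intermediate divisibility $p^\alpha\mid(\lambda^2/a_\chi - a_\chi)$. Your version has the small bonus of producing the two separate facts $p^\alpha\mid(k^2-a_\chi)$ and $p^\alpha\mid(k^2-\lambda^2/a_\chi)$ along the way; the paper's version sidesteps your orbit-projection bookkeeping by working directly with the difference $D_iD_i^{(-1)}-D_jD_j^{(-1)}$, where the $k^2$ cancels, and needs only that the character values are integers. On your flagged obstacle about projecting orbits: it does go through (the reduction $(\Z/n\Z)^*\to(\Z/p^\beta\Z)^*$ is a surjective group homomorphism, hence has fibres of equal size), but you can bypass it entirely by applying Lemma~\ref{LemOrbit0} directly in $\Z[U]$, since every character of $U$ lifts to one of $G$ and therefore takes an integer value on $\pi(D_iD_i^{(-1)})$.
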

\begin{proof}
Let $g$ be a generator of $G/N$.
There exist $D_i$ and $D_j$ such that $| \chi(D_i)|^2=a_{ \chi}$ and
$| \chi (D_j)|^2=\lambda^2/a_{ \chi}$.
Let $t_g$ be the coefficient of $g$ in
$\pi(D_iD_i^{(-1)}-D_jD_j^{(-1)})$.
By Fourier inversion formula,
\[ p^{\alpha} t_g= \sum_{\phi\in (G/N)^*} \phi(\pi(D_iD_i^{(-1)}-D_jD_j^{(-1)})) \phi(g^{-1}).\]
Let $ \chi=\chi_1, \chi_2,\ldots,  \chi_{\alpha}$ be characters of order $p, p^2,\ldots, p^{\alpha}$ in $(G/N)^*$.
Let $\Omega(i)=\{  \chi_i^t: p\nmid t\}$. Note that $\phi(\pi(D_iD_i^{(-1)}-D_jD_j^{(-1)})) \in \mathbb{Z}$ for all $\phi\in (G/N)^*$. Hence,
\[ \sum_{\phi \in \Omega(i)} \phi(\pi(D_iD_i^{(-1)}-D_jD_j^{(-1)})) \phi(g^{-1})=
\chi_i(\pi(D_iD_i^{(-1)}-D_jD_j^{(-1)}))  \sum_{\phi \in \Omega(i)} \phi(g^{-1}).\]
Note that
\[  \sum_{\phi \in \Omega(i)} \phi(g^{-1})=\left\{\begin{array}{ll}
-1 & \mbox{if } i=1\\
0 & \mbox{if } 2 \le i \le \alpha .
 \end{array}\right.\]
Thus
$p^\alpha t_g = \lambda^2/a_{ \chi}-a_{\chi}=(\lambda^2-a_{\chi}^2)/a_{\chi}$.
\end{proof}

\begin{rem}
When $p$ is odd, the result in the above lemma can be further strengthened. If
$p$ is odd and $p\nmid k$, then $p^{\alpha}|(\lambda+a_{ \chi})$
as by Lemma~\ref{LemCharPrime} (b), $p|(\lambda+a_{ \chi})$ and $p\nmid (\lambda-a_{ \chi})$.
\end{rem}

\begin{cor}\label{LemDivChar3}
Suppose $p$ is a prime and there exists a character $ \chi \in G_N^*$ of order $p^{\alpha}$ for some $\alpha \geq 1$. If $p \nmid k$ and $p$ is odd, then $p^{\alpha}|(m-2)$.
\end{cor}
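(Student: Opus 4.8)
The plan is to reduce the statement to the Remark following Lemma~\ref{LemPower}, which already produces the divisibility $p^{\alpha}\mid(\lambda+a_{\psi})$ for an order-$p$ character $\psi\in G_N^*$ sitting atop a cyclic quotient of order $p^{\alpha}$, and then to convert this into $p^{\alpha}\mid(m-2)$ through Corollary~\ref{CorBasic}(a). Concretely, I would take $\psi:=\chi^{p^{\alpha-1}}$, the order-$p$ character in $\langle\chi\rangle$, together with $N:=\ker(\chi)$, so that $G/N$ is cyclic of order $p^{\alpha}$ and $N\subset\ker(\psi)$; this is exactly the configuration required by Lemma~\ref{LemPower} and its Remark.

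First I would record the elementary consequences of the hypotheses. Applying Lemma~\ref{LemCharPrime}(a) to $\chi$ gives $p\mid(m-2)k$, and since $p\nmid k$ we get $p\mid(m-2)$; as $p$ is odd this forces $p\nmid m$, for otherwise $p$ would divide $m-(m-2)=2$. In addition, Lemma~\ref{LemCharPrime}(c) applied to $\chi$ yields $p\nmid\lambda$, which I will need at the very end.

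The main obstacle is that the Remark after Lemma~\ref{LemPower} applies only when the order-$p$ character $\psi$ lies in $G_N^*$, whereas a priori $\psi$ could instead lie in $G_0^*$. This is precisely where the oddness of $p$ and the relation $p\mid(m-2)$ intervene: if $\psi\in G_0^*$, then Lemma~\ref{LemDivChar1} applied to $\psi$ forces $p\mid mk$, hence $p\mid m$ since $p\nmid k$, contradicting the fact that $p\nmid m$ established above. Consequently $\psi\in G_N^*$, and now the Remark applies (using $p$ odd and $p\nmid k$) to give $p^{\alpha}\mid(\lambda+a_{\psi})$. I expect this dichotomy step to be the genuine heart of the argument; everything else is bookkeeping.

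Finally I would feed this divisibility into Corollary~\ref{CorBasic}(a) for $\psi$, namely $(\ell_{\psi}-1)(\lambda+a_{\psi})=\lambda(m-2)$. Since $p^{\alpha}\mid(\lambda+a_{\psi})$ divides the left-hand side and $p\nmid\lambda$, it follows that $p^{\alpha}\mid(m-2)$, completing the proof. The case $\alpha=1$ is already covered by the first paragraph, and for $\alpha\geq 2$ no separate induction is needed, since the Remark delivers the full power $p^{\alpha}$ in one stroke once $\psi\in G_N^*$ is secured.
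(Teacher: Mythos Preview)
Your proposal is correct and follows essentially the same route as the paper: set $\psi=\chi^{p^{\alpha-1}}$, rule out $\psi\in G_0^*$ via the $p\mid m$ versus $p\mid(m-2)$ dichotomy (a step the paper states but leaves largely implicit), invoke the Remark after Lemma~\ref{LemPower} to get $p^{\alpha}\mid(\lambda+a_{\psi})$, and finish with Corollary~\ref{CorBasic}. The only cosmetic difference is that you close with part~(a) of Corollary~\ref{CorBasic} together with $p\nmid\lambda$, whereas the paper cites part~(c); both yield $p^{\alpha}\mid(m-2)$ immediately.
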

\begin{proof}
By assumption, $|G/\ker ( \chi)|=p^{\alpha}$. To apply the previous lemma, we first show
 $ \chi^{p^{\alpha-1}}\in G_N^*$. As $p\nmid k$, it follows from Lemma \ref{LemCharPrime} that $p|(m-2)$. Since $\psi=\chi^{p^{\alpha-1}}$ is a character of order $p$ in $G_N^*$, it follows from Lemma \ref{LemPower} and Remark 23 that $p^{\alpha}|(\lambda+a_{\psi})$.
Our result follows now from Corollary \ref{CorBasic} (c).
\end{proof}

\medskip

It is not clear whether $|{\cal P}^*|$ divides $(m-2)$ even if ${\cal P}^*\subset G_N^*\cup \{\chi_0\}$. However, we have developed enough tools to prove Theorem \ref{ThmCyclic}.

\begin{proof}[Proof of Theorem \ref{ThmCyclic}]
Suppose $v=p^{\alpha}$, where $p$ is a prime. For any $1\leq t\leq \alpha$, we let $\chi_t$ be a character of order $p^t$.

Suppose $ \chi_{\alpha} \in G_N^*$. Since $m>2$, there exist $1 \leq i, j \leq m$ such that $ \chi_{\alpha}(D_i) =  \chi_{\alpha}(D_j)$, i.e. $ \chi_{\alpha}(D_i-D_j)=0$. Then by Lemma \ref{LemPpower} (a), $D_i-D_j = PX$ for some $X \in \mathbb{Z}[G]$, where $P$ is the unique subgroup of order $p$ in $G$. As $D_i$ and $D_j$ are disjoint subsets in $G$,
$D_i$ must be a union of $P$-cosets. So, $ \chi_{\alpha}(D_i) = 0$, contradicting Result \ref{ResGn}. Thus, $ \chi_{\alpha} \in G_0^*$ and so $p|mk$.
On the other hand, as $G_N^* \neq \emptyset$, so by Lemma \ref{LemCharPrime} (a), $p|(m-2)k$. Consequently,
$p|(2k)$.

We claim $p\nmid k$. Otherwise, it is then clear that $p^2 | \lambda$. As $ \chi_{\alpha} \in G_0^*$,
$| \chi_{\alpha}(D_i)|^2 = \lambda$ is divisible by $p^2$. As prime ideals above $p$ are invariant under complex conjugation, we see that $p| \chi_{\alpha}(D_i)$. By Ma's Lemma \cite[Lemma 1.5.1]{Sch}, $D_i = pX_0 + PX_1$ for some $X_0, X_1 \in \mathbb{Z}[G]$. As all nonzero coefficients in $D_i$ are $1$, this is impossible unless
$X_0=0$. That implies $D_i = PX_1$ and therefore $ \chi_{\alpha}(D_i)=0$, contradicting Result \ref{ResG0}.

We may assume $p\nmid k$. Recall that $p|(2k)$. That means $p=2$, $k$ is odd and $m$ is even. In particular, $(m-1)$ and $\lambda$ are odd. Let $Z=\{ 1\leq i\leq \alpha \mid \chi_i \in G_N^*\}$. 
Recall that we have $|Z|>0$.  

Case (1): $|Z|=1$. Then, by Lemma \ref{LemDivChar1}, $2^{\alpha-1}|(mk)$. That means $2^{\alpha-1}|m$. By Remark 17,  $k\geq 5$. Thus $mk>2^{\alpha}$. This is impossible.

Case (2): $|Z|\geq 2$. Then by Lemma~\ref{LemCharPrime} (d), $4|(m-2)k$. Since $k$ is odd, $4\nmid (mk)$. By Lemma~\ref{LemDivChar1}, $|Z|=\alpha -1 $. As $\chi_{\alpha}\in G_0^*$,  $\chi_1\in G_N^*$.
By Lemma~\ref{LemPower}, $2^{\alpha}|(\lambda^2-a_1^2)$. Since $\lambda$ is odd, $2^{\alpha-1}|(\lambda-a_1)$ or
$2^{\alpha-1}|(\lambda+a_1)$. If $2^{\alpha-1}|(\lambda-a_1)$, then $\lambda\geq 2^{\alpha-1}$.  If $2^{\alpha-1}|(\lambda+a_1)$, then $\lambda\geq 2^{\alpha-2}$. In any case, $4\lambda \ge v$. But by Proposition 1 (a), $m\geq 5$, so $v> mk > m \lambda > 4 \lambda=2^{\alpha}$. This is impossible.
\end{proof}

\section{$v=pqr$}\label{Sec5}

We keep our notations used earlier. Again, we assume $\{D_1,\ldots, D_m\}$ is a $(v,m,k,\lambda)$-SEDF on $G$. For any divisor $r$ of $v$,  we assume $ \chi_r\in G^*$  of order $r$ and for simplicity, we denote $a_{ \chi_r}$ by $a_r$. In the previous section, we only deal with characters of prime power order. 
To deal with the case $v$ has at least two distinct prime divisors $p,q$, it is then natural to consider characters of order $pq$.

\begin{lem}\label{LemCharCompare}
 Let $ \chi_p, \chi_q, \chi_{pq}, a_p, a_q$ and $a_{pq}$ be as defined above. 
Suppose $\chi_{pq}=\chi_p\chi_q$. Then the following holds.
\begin{itemize}
\item [(a)] $q| (a_{p} - a_{pq})$.
\item [(b)] If $ \chi_p \in G_0^*$ and $ \chi_{pq} \in G_N^*$, then
$q|(\lambda - a_{pq})$. In particular, $q<\lambda$.
\item [(c)] If $ \chi_{pq} \in G_0^*$ and $ \chi_p \in G_N^*$,
then $q|(\lambda - a_{p})$. In particular, $q<\lambda$.
\item [(d)] If $ \chi_p\in G_0^*$, $ \chi_q ,  \chi_{pq} \in G_N^*$, $q\nmid k$ and $q$ is odd, then $\lambda > \max(p,q)$ and $m \ge q(q+2)+2$. Moreover,
\[ v> m\lambda > \max(pq^2, q^3).\]
\item [(e)] If $ \chi_p,  \chi_{pq} \in G_0^*$, $ \chi_q \in G_N^*$,
 $\gcd(pq,k)=1$ and $p$ is odd, then $\lambda\geq \max(p+1,q-1)$ and there exists a positive integer $x$ such that
\[ m=(px-1)\left(\frac{\lambda}{a_q}+1\right)+2.\]
\end{itemize}
\end{lem}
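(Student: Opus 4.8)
My plan rests on a single mechanism: the factorization $\chi_{pq}=\chi_p\chi_q$ forces two congruences between character values. Let $\mathfrak{q}$ be a prime of $\mathbb{Z}[\zeta_{pq}]$ above $q$. Every value of $\chi_q$ is a $q$-th root of unity, hence $\equiv 1 \pmod{\mathfrak{q}}$, so $\chi_{pq}(g)\equiv\chi_p(g)\pmod{\mathfrak{q}}$ for all $g$, and therefore $\chi_{pq}(A)\equiv\chi_p(A)\pmod{\mathfrak{q}}$ for every $A\in\mathbb{Z}[G]$. Taking $A=D_iD_i^{(-1)}$, whose character values are the integers $|\chi(D_i)|^2$ (Results \ref{ResG0} and \ref{ResGn}), and using $\mathfrak{q}\cap\mathbb{Z}=q\mathbb{Z}$, I obtain the key congruence $|\chi_{pq}(D_i)|^2\equiv|\chi_p(D_i)|^2\pmod q$ for every $i$; symmetrically $|\chi_{pq}(D_i)|^2\equiv|\chi_q(D_i)|^2\pmod p$. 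This is the engine for the whole lemma.

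For (a) I would combine this with the fact that each $G_N^*$ character attains its minimal modulus on a majority of indices: by Result \ref{ResGn}(c), $\ell_{\chi}>m/2$, so the index sets where $\chi_p$ and $\chi_{pq}$ realize their respective minima must overlap (and the $G_0^*$ cases are automatic, since then every index realizes the value $\lambda$). Evaluating the $q$-congruence at a common index gives $a_p\equiv a_{pq}\pmod q$, which is (a). Parts (b) and (c) are then immediate: substituting $a_p=\lambda$ (resp.\ $a_{pq}=\lambda$) for the $G_0^*$ character and noting that the $G_N^*$ value lies strictly in $(0,\lambda)$ yields both the divisibility and the strict bound $q<\lambda$.

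For (d), $\lambda>q$ is exactly (b). For $\lambda>p$ I would use the $p$-congruence: via the same majority argument $p\mid(a_q-a_{pq})$, and since $a_q,a_{pq}\in[1,\lambda)$ their difference is smaller than $\lambda$, giving $p<\lambda$ whenever $a_q\neq a_{pq}$; the degenerate case $a_q=a_{pq}$ I would dispatch separately (for instance, if $p\mid k$ then $p\mid\lambda$, and $\lambda$ cannot be the prime $p$ by Corollary \ref{CorLambdaPrime}, so $\lambda\geq 2p$). The bound on $m$ is the substantive part: Corollary \ref{CorBasic}(c) applied to $\chi_q$ gives $e_q:=(\lambda+a_q)/\gcd(\lambda,a_q)\mid(m-2)$, and Lemma \ref{LemCharPrime}(c) (using $q\nmid k$ and $q$ odd) gives $q\mid e_q$; the same corollary applied to $\chi_{pq}$ gives a second divisor $e_{pq}\mid(m-2)$, which is coprime to $q$ because (b) forces $\lambda\equiv a_{pq}\pmod q$ with $q\nmid\lambda$. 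Forcing these two divisibilities, together with $q\mid(m-2)$ from Corollary \ref{LemDivChar3}, up to the stated threshold $m\geq q(q+2)+2$ is where the real work lies. Once $m>q^2$ and $\lambda>\max(p,q)$ are in hand, the final chain $v>mk>m\lambda>\max(pq^2,q^3)$ follows from $k>\lambda$ (Theorem \ref{ThmBound}) and $mk<v$ (valid since $G_N^*\neq\emptyset$ forces $D\neq xG$).

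For (e) the bounds come quickly: $q\leq\lambda+1$, i.e.\ $\lambda\geq q-1$, is Lemma \ref{LemCharPrime}(c) for $\chi_q$, while the $p$-congruence between $\chi_{pq}\in G_0^*$ (value $\lambda$) and $\chi_q\in G_N^*$ gives $p\mid(\lambda-a_q)$ with $0<\lambda-a_q<\lambda$, hence $\lambda\geq p+1$. For the explicit formula I would first note $p\mid m$ (from $\chi_p\in G_0^*$ via Lemma \ref{LemDivChar1} together with $p\nmid k$, or directly from Corollary \ref{CorG0}), and then establish the divisibility $p\mid\ell_{\chi_q}$; granting this, setting $x=(m-\ell_{\chi_q})/p$ and rewriting Corollary \ref{CorBasic}(b) as $m-2=(m-\ell_{\chi_q}-1)(\lambda+a_q)/a_q=(px-1)(\lambda/a_q+1)$ yields the claim, with $x>0$ since $\ell_{\chi_q}<m$. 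The main obstacle throughout is precisely these quantitative steps---the threshold $m\geq q(q+2)+2$ in (d) and the divisibility $p\mid\ell_{\chi_q}$ underlying the formula in (e)---each of which requires feeding the two prime-ideal congruences through the counting identities of Corollary \ref{CorBasic} and the prime-power results of Section \ref{Sec4}, while tracking coprimality of the resulting divisors of $m-2$ with some care.
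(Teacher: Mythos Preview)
Your prime-ideal mechanism for (a)--(c) is correct and is a clean alternative to what the paper does. The paper instead projects onto the cyclic quotient $G/\ker(\chi_{pq})\cong C_{pq}$, writes $\pi(D_iD_i^{(-1)})$ as a $\mathbb{Z}$-combination of the four Galois orbits there, and reads off $\chi_p(\cdot)-\chi_{pq}(\cdot)=q(t_q-t_{pq})$ by direct evaluation. Your reduction modulo a prime $\mathfrak{q}$ above $q$ reaches the same congruence $|\chi_{pq}(D_i)|^2\equiv|\chi_p(D_i)|^2\pmod q$ with less bookkeeping; the majority overlap argument and the deductions of (b), (c) then match the paper's.

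The genuine gap is in (d), in the degenerate subcase $a_q=a_{pq}$. Your parenthetical ``for instance, if $p\mid k$ then $p\mid\lambda$'' does dispose of that branch (indeed $p^2\mid\lambda$ then), but it leaves the branch $p\nmid k$ untouched, and once $a_q=a_{pq}$ the $p$-congruence carries no further information toward $\lambda>p$. The paper does not try to prove $\lambda>p$ in the degenerate case at all; it eliminates the case outright using the $q$-side: from (b) you already have $q\mid(\lambda-a_{pq})=(\lambda-a_q)$, and Lemma~\ref{LemCharPrime}(b) applied to $\chi_q$ gives $q\mid(\lambda+a_q)$; adding, $q\mid 2\lambda$, while $q\nmid k$ forces $q\nmid\lambda$ by Lemma~\ref{LemCharPrime}(c), so $q=2$, contradicting the hypothesis that $q$ is odd. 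With $a_q\neq a_{pq}$ secured, your bound $p\le|a_q-a_{pq}|<\lambda$ goes through.

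For $m\ge q(q+2)+2$ you have the right ingredients but introduce $e_q$ unnecessarily. The paper works only with $e_{pq}=(\lambda+a_{pq})/\gcd(\lambda,a_{pq})$: from $q\mid(\lambda-a_{pq})$ and $q\nmid\gcd(\lambda,a_{pq})$ one gets $(\lambda-a_{pq})/\gcd(\lambda,a_{pq})\ge q$, hence $e_{pq}\ge q+2$; combined with your observation $q\nmid e_{pq}$ and with $q\mid(m-2)$, the product $q\cdot e_{pq}$ divides $m-2$. Your plan for (e) is essentially the paper's: your divisibility $p\mid\ell_{\chi_q}$ is equivalent (given $p\mid m$) to the paper's $p\mid(t+1)$ with $t=m-\ell_{\chi_q}-1$, which the paper obtains by rewriting $m=t(\lambda/a_q-1)+2(t+1)$ and noting that $p$ divides both $m$ and the first summand.
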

\begin{proof}
Let $H=\ker( \chi_{pq})$. Since $ \chi_{pq}$ has order $pq$, then $G/H$ is cyclic and of order  $pq$.
As before, we may view $\chi_p, \chi_q$ and $\chi_{pq}$ as characters in $(G/H)^*$.
Let $\pi: G \rightarrow G/H$ be the natural projection.
$G/H$ has four orbits $\{1_{G/H}\}, P', Q'$ and $(PQ)'$ in $G/H$, consisting of all elements of order $1$, $p$, $q$ and $pq$ respectively. Note that $ \chi_p(\pi(D_iD_i^{(-1)})),  \chi_q(\pi(D_iD_i^{(-1)})),  \chi_{pq}(\pi(D_iD_i^{(-1)})) \in \mathbb{Z}$.
By Lemma~\ref{LemOrbit0},
\[ \pi(D_i D_i^{(-1)})=t_0+t_{p}P'+t_{q}Q'+t_{pq}(PQ)',\]
for some integers $t_0,t_{p},t_{q},t_{pq}$.
Observe that $ \chi_{pq}(P')= \chi_{pq}(Q')=-1$ and $ \chi_{pq}((PQ)')=1$ as well as
 $ \chi_p(P')=-1,  \chi_p(Q')=q-1$ and $ \chi_p((PQ)')=-(q-1)$. Therefore,
$$ \chi_p(\pi(D_i D_i^{(-1)}))- \chi_{pq}(\pi(D_i D_i^{(-1)}))=q(t_{q}-t_{pq}).$$
It remains to show that there exists $1 \le i \le m$ such that  $ \chi_{pq}(\pi(D_i D_i^{(-1)}))=a_{pq}$ and  $ \chi_p(\pi(D_i D_i^{(-1)}))=a_p$. Clearly, by Results \ref{ResG0} and \ref{ResGn} (c), such $i$ exists. Parts (b) and (c) follow easily from (a) as either $a_p=\lambda$ or $a_{pq}=\lambda$.

We now consider (d). By (b), we have $\lambda>q$. By (a), we conclude that $p|(a_q-a_{pq})$. If $a_q  \neq a_{pq}$, then $\lambda>p$ and therefore, $\lambda>\max(p,q)$. We claim that $a_q=a_{pq}$ leads to $q=2$. Since $ \chi_p \in G_0^*$ and $ \chi_q\in G_N^*$, we conclude from (a) that $q |(\lambda-a_{pq})$ and from Lemma~\ref{LemCharPrime} (b) that
$q |(\lambda + a_{q})$. Consequently, $q|2\lambda$. Since $q \nmid k$, then $q \nmid \lambda$ by Lemma \ref{LemCharPrime} (c). Hence, we derive $q=2$, which is impossible.

Furthermore, by Lemma~\ref{LemCharPrime} (a) and Corollary \ref{CorBasic} (c), $q|(m-2)$ and
\[ \frac{\lambda + a_{pq}}{\gcd (\lambda, a_{pq})} | (m-2). \]
Recall that $q|(\lambda - a_{pq})$ and $q \nmid 2\lambda$. Thus,
\[ q\nmid (\lambda + a_{pq})  \mbox{ and } q\frac{\lambda + a_{pq}}{\gcd (\lambda, a_{pq})}|(m-2).\]
Note that $q\nmid \gcd (\lambda, a_{pq})$ as $q\nmid \lambda$. Therefore,
 $q|\frac{\lambda - a_{pq}}{\gcd (\lambda, a_{pq})}$ and
 \[ \frac{\lambda + a_{pq}}{\gcd (\lambda, a_{pq})} \ge \frac{\lambda - a_{pq}}{\gcd (\lambda, a_{pq})}+2 \geq q+2.\]
Hence, $m\geq q(q+2)+2$.

To show (e), note that by (c), $p| (\lambda-a_q)$ and $p<\lambda$. By Lemma \ref{LemCharPrime} (c), $q \le \frac{\lambda+a_q}{\gcd(\lambda,a_q)}$ and so $\lambda \geq q-1$. Hence, $\lambda \ge \max(p+1, q-1)$.
Recall that there exists an integer $t$ such that
$m-2=t(\frac{\lambda}{a_q}+1)$. Therefore
\[ m=t(\frac{\lambda}{a_q}-1)+2(t+1).\]
As $\gcd(p, k)=1$, $\gcd(p, \lambda)=1$ and $\gcd(p, a_q)=1$. Hence, $p$ divides the integer
$t\frac{\lambda-a_q}{a_q}=t(\frac{\lambda}{a_q}-1)$. Note that $ \chi_p\in G_0^*$ and therefore, $p|m$ and hence $p|(t+1)$. Thus, there exists a positive integer $x$ such that $t=px-1$.
\end{proof}

\medskip

Observe that in (d) above, we require $q\nmid k$ and in (e) above, we require $\gcd(pq,k)=1$.  
In order to apply them, we first show the following lemma describing the relation between $k$ and the prime divisors of $v$.

\begin{lem}\label{LemThreePrimes}
Suppose $v=pqt$ where $p,q$ are distinct primes and $t=1$ or is prime.  
Then $\gcd(pq, k)=1, p$ or $q$. Furthermore, if $\gcd(pq, k)\neq 1$, then $t > \gcd(pq, k)$. 
\end{lem}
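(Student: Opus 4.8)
The plan is to reduce everything to how the individual primes $p,q$ divide $k$, and then, in each offending case, to pit a lower bound on a prime coming from the disjointness count $mk\le v$ against an upper bound coming from the single‑character divisibility results of Section \ref{Sec4}. Since $\gcd(pq,k)$ divides $pq$, it automatically lies in $\{1,p,q,pq\}$; hence the first assertion is exactly the statement $pq\nmid k$, and the ``furthermore'' is, after relabelling, the statement that $p\mid k$ and $q\nmid k$ force $t>p$.

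Two elementary facts drive the argument, which I would record first. For a prime $\ell\in\{p,q\}$ with $\ell\mid k$, taking $\ell$-adic valuations in \eqref{EqBasic} and using $\ell\nmid(v-1)$ (as $\ell\mid v$) gives $v_\ell(\lambda)=v_\ell(m-1)+2v_\ell(k)\ge 2$, so $\ell^2\mid\lambda$; thus $\lambda\ge\ell^2$ and, by Theorem \ref{ThmBound}, $k>\lambda\ge\ell^2$. Secondly, $mk\le v$ together with $m\ge 5$ (the remark following Corollary \ref{CorLambdaPrime}). I would also note the key dichotomy: for a prime $\ell\mid v$ with $\ell\nmid k$, any character $\chi_\ell$ of order $\ell$ lies in $G_N^*$ or in $G_0^*$; in the former case Lemma \ref{LemCharPrime}(a) forces $\ell\mid(m-2)$, hence $\ell\le m-2$, while in the latter Lemma \ref{LemDivChar1} forces $\ell\mid m$, hence $\ell\le m$. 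In both cases $\ell\le m$.

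To exclude $pq\mid k$: now $\lambda\ge p^2q^2$ and $k>p^2q^2$. If $t=1$ then $k\ge pq=v$, contradicting $mk\le v$; if $t\in\{p,q\}$ then $mk\le v\le p^2q$ while $k>p^2q^2$ forces $m<1$, again absurd. So $t$ must be a third prime distinct from $p,q$. Here $t\nmid k$ (otherwise $pqt=v\mid k$, impossible since $k\le v/m<v$), and $mk\le v=pqt$ with $k>p^2q^2$ yields $mpq<t$. Applying the dichotomy to $\chi_t$ gives $t\le m$; combined with $t>mpq\ge 6m$ this is a contradiction. Hence $pq\nmid k$, which is the first assertion. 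For the ``furthermore'' I would assume $p\mid k$, $q\nmid k$ and, for contradiction, $t\le p$. Then $v=pqt\le p^2q$, while $k>p^2$ gives $m<v/k\le q$, so $m\le q-1$ and $3\le m-2\le q-3$. Since $0<m<q$, the character $\chi_q$ cannot satisfy $q\mid m$, so it lies in $G_N^*$; Lemma \ref{LemCharPrime}(a) then gives $q\mid(m-2)$, i.e.\ $q\le m-2\le q-3$, a contradiction. Therefore $t>p$.

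The step I expect to be the genuine obstacle is the case of three distinct primes in the first assertion (and, symmetrically, the boundary $t\le p$ with $q$ large in the second): there the crude size estimates $mk\le v$, $\lambda\ge\ell^2$ and $\lambda<(v-1)/(m-1)$ are individually too weak to close, since they only force one prime to be large relative to the others, which is permissible. The decisive input is the divisibility $\ell\mid(m-2)$ extracted from membership of $\chi_\ell$ in $G_N^*$ (Lemma \ref{LemCharPrime}(a)), bounding the prime \emph{above} by $m-2$; feeding this into the counting lower bound $t>mpq$ (respectively $q\le m-2\le q-3$) is what produces the contradiction. Verifying that $\chi_t$ (respectively $\chi_q$) cannot sit in $G_0^*$, via the bound $\ell\le m$ clashing with $\ell>mpq$ (respectively $0<m<q$), is the small but essential book‑keeping that makes the dichotomy usable.
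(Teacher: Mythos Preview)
Your proof is correct and follows essentially the same route as the paper: both arguments hinge on the two facts that $\ell\mid k$ forces $\ell^2\mid\lambda$ via \eqref{EqBasic}, and that a prime $\ell\mid v$ with $\ell\nmid k$ must satisfy $\ell\mid m$ or $\ell\mid(m-2)$ by Lemmas \ref{LemDivChar1} and \ref{LemCharPrime}(a), and then play these against the disjointness bound $m\lambda<mk\le v$. The only cosmetic differences are that the paper tracks $m\lambda$ throughout while you track $mk$, and for the ``furthermore'' part the paper derives $t>p$ directly from $pqt=v>m\lambda\ge qp^2$ whereas you run the contrapositive. One harmless slip: when $t\in\{p,q\}$ you write $v\le p^2q$, which fails if $t=q>p$; but the conclusion survives since in either case $v\in\{p^2q,pq^2\}<p^2q^2<k$.
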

\begin{proof} Suppose $\gcd(pq,k)=pq$. Then $p^2q^2|\lambda$. 
As $v=pqt>k>\lambda=p^2q^2$, it is possible only when $t$ is prime with $t\neq p$ and $t\neq q$. 
Clearly, $t\nmid k$ as $v>k$. Thus, by Lemma \ref{LemDivChar1} and \ref{LemCharPrime}, we deduce that  $t|m$ or $t|(m-2)$ . In any case, $m \lambda >pqt$. 

Next, we assume $\gcd(pq, k)=p$. Then $\lambda \geq p^2$.  By the same argument as before, $q|(m-2)$ or $q|m$. Again, we then have $ v=pqt > m\lambda > q p^2.$ 
Therefore, $t>p$. 
\end{proof}

\begin{cor}\label{CorPq}
There exists no $(pq,m,k,\lambda)$-SEDF with $m > 2$.
\end{cor}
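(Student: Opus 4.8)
The plan is to prove Corollary~\ref{CorPq} by deriving a contradiction from the structural constraints accumulated so far, so suppose a $(pq,m,k,\lambda)$-SEDF with $m>2$ exists. Here $v=pq$, which is the case $t=1$ of Lemma~\ref{LemThreePrimes}; that lemma then forces $\gcd(pq,k)=1$, since $t=1$ cannot exceed $\gcd(pq,k)$ when the latter is larger than $1$. So first I would record that $\gcd(pq,k)=1$, i.e.\ $p\nmid k$ and $q\nmid k$. By Corollary~\ref{CorLambdaPrime} and the Remark following it we may assume $\lambda\geq 4$, $k\geq 5$, and $m\geq 5$ throughout.

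Next I would analyze the Sylow structure of $G^*$ by looking at the characters $\chi_p$ of order $p$ and $\chi_q$ of order $q$. Since $G_N^*\neq\emptyset$ (Result~\ref{ResGn}), and since $v=pq$ has exactly two prime divisors, any nonprincipal character has order $p$, $q$, or $pq$. Lemma~\ref{LemCharPrime}(a) tells us that a nonprincipal character of prime power order in $G_N^*$ forces that prime to divide $(m-2)k$; combined with $p\nmid k$, $q\nmid k$, this means that if $\chi_p\in G_N^*$ then $p\mid(m-2)$, and similarly for $q$. Meanwhile Lemma~\ref{LemDivChar1} handles the $G_0^*$ case: a character of order $p$ in $G_0^*$ forces $p\mid mk$, hence $p\mid m$ (as $p\nmid k$). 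Corollary~\ref{CorG0} upgrades this: if $p$ is odd, $p\nmid k$, and $p\mid m$, then the whole Sylow $p$-subgroup $\mathcal P^*$ lies in $G_0^*\cup\{\chi_0\}$ and $|\mathcal P^*|\mid m$. The key dichotomy is then: for each odd prime among $p,q$, the character of that order is either in $G_0^*$ (giving strong control via Corollary~\ref{CorG0}) or in $G_N^*$ (giving $p\mid(m-2)$ via Corollary~\ref{LemDivChar3}, which for odd primes yields $p^\alpha\mid(m-2)$).

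The heart of the argument is to invoke the mixed-order analysis of Lemma~\ref{LemCharCompare}, applied to $\chi_{pq}=\chi_p\chi_q$. Depending on which of $\chi_p,\chi_q,\chi_{pq}$ land in $G_0^*$ versus $G_N^*$, one of parts (d) or (e) should apply and produce a lower bound on $m\lambda$ of the form $m\lambda>\max(pq^2,q^3)$ (part (d)) or a strong divisibility/size constraint (part (e)). Since $v=pq$ and the basic inequality $v>mk>m\lambda$ holds (from $k>\lambda$ and Proposition~\ref{prop-nonexistence}(c)), any bound of the shape $m\lambda>pq^2$ or $m\lambda>q^3$ immediately contradicts $m\lambda<pq$. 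The main obstacle will be organizing the case analysis over the possible $G_0^*/G_N^*$ assignments of the three characters so that Lemma~\ref{LemCharCompare} always applies in some part, and handling the even prime (when $p=2$ or $q=2$) separately, since Corollaries~\ref{CorG0} and~\ref{LemDivChar3} and the Remark strengthening Lemma~\ref{LemPower} all require the prime to be odd. I would treat the even case by hand, using $p\nmid k$ to fix the parity of $k$, $m$, $\lambda$ and then deriving a contradiction with $v>m\lambda$ as in the even branch of the proof of Theorem~\ref{ThmCyclic}; for the remaining cases I expect the clean $m\lambda>v$ contradiction from Lemma~\ref{LemCharCompare}(d) to do most of the work.
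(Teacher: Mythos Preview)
Your overall strategy matches the paper's, but your case organization has a real gap. You write that ``depending on which of $\chi_p,\chi_q,\chi_{pq}$ land in $G_0^*$ versus $G_N^*$, one of parts (d) or (e) should apply.'' This is false: both parts (d) and (e) of Lemma~\ref{LemCharCompare} presuppose $\chi_p\in G_0^*$ and $\chi_q\in G_N^*$. If \emph{both} $\chi_p,\chi_q$ lie in $G_0^*$, or both lie in $G_N^*$, neither part applies. The paper disposes of these configurations first and directly: from Lemmas~\ref{LemDivChar1} and~\ref{LemCharPrime}(a) (which need no oddness hypothesis) one gets $pq\mid m$ or $pq\mid(m-2)$, hence $m\ge pq=v$, contradicting $mk<v$. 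You should insert this step before invoking Lemma~\ref{LemCharCompare}.

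Your plan to treat the even prime separately is unnecessary and the paper avoids it entirely. Once the mixed case $\chi_p\in G_0^*$, $\chi_q\in G_N^*$ is reached, one has $p\mid m$ and $q\mid(m-2)$, so $m\ge\max(p,q)$; combined with $k\ge 5$ this gives $pq>mk\ge 5\max(p,q)$, forcing $\min(p,q)\ge 5$ and hence both primes odd. Now Lemma~\ref{LemCharCompare}(d) or (e) applies according as $\chi_{pq}\in G_N^*$ or $\chi_{pq}\in G_0^*$, and in either case yields $\lambda+1\ge\max(p,q)$ (from (d) one gets $\lambda>\max(p,q)$; from (e) one gets $\lambda\ge\max(p+1,q-1)$). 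Since $k>\lambda$, this gives $k\ge\max(p,q)$, and then $mk\ge\max(p,q)^2>pq=v$. Your proposal leaves the extraction of a contradiction from part~(e) vague (``a strong divisibility/size constraint''); the concrete output you need is precisely the inequality $\lambda\ge\max(p+1,q-1)$ stated there.
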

\begin{proof}
By Lemma~\ref{LemThreePrimes}, $\gcd(pq,k)=1$. As before, 
we let $ \chi_p, \chi_q$ be characters of order $p$ and $q$ respectively. If both $\chi_p, \chi_q \in G_0^*$ or
both in $G_N^*$, then by Lemma \ref{LemDivChar1} and \ref{LemCharPrime}, $pq|m$ or $pq|(m-2)$. But then $m\geq v$. This is impossible. 

Without loss of generality, we may assume $ \chi_p\in G_0^*$ and $ \chi_q\in G_N^*$. 
Again, we conclude that $p|m$ and $q|(m-2)$. In particular, $m\geq \max(p,q)$. 
Note that by Remark 17, $k\geq 5$. Therefore, $pq > mk\geq 5 \max(p,q)$. 
That means both $p,q$ are odd primes. 
By considering $\chi_{pq}$ and Lemma \ref{LemCharCompare} (d), (e), we see that $k\geq \lambda+1\geq \max(p,q)$. Therefore, 
\[ mk \geq \max(p,q)^2 >pq=v.\]
This is impossible. 
\end{proof}

\begin{rem}
The nonexistence of $(pq,m,k,\lambda)$-SEDF was first recorded in \cite[Theorem 3.9]{BJWZ}. Unfortunately, there is a gap in the proof.
\end{rem}

\begin{thm}\label{3distinct}
Let $p,q,r$ be distinct primes. Then there exists no $(pqr,m,k,\lambda)$-SEDF with $m > 2$.
\end{thm}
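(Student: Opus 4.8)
The plan is to proceed exactly as in the $v=pq$ case (Corollary~\ref{CorPq}), using the characters $\chi_p,\chi_q,\chi_r$ of prime order together with the mixed-order characters $\chi_{pq},\chi_{pr},\chi_{qr}$, but now with three primes the divisibility pressure on $m$ becomes the main engine. First I would invoke Lemma~\ref{LemThreePrimes} (applied with the three choices of which prime plays the role of $t$) to pin down $\gcd(pqr,k)$. Since $v=pqr$ is a product of three \emph{distinct} primes and $k<v$, the cases where $k$ shares two or three prime factors with $v$ force $\lambda$ to be divisible by a large square (e.g.\ $\gcd=pq$ gives $p^2q^2\mid\lambda$), and then $k>\lambda\ge p^2q^2$ collides with $k<v=pqr$ unless the remaining prime is very large; Lemma~\ref{LemThreePrimes} already records the needed inequality $t>\gcd$. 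So I expect to reduce quickly to the generic situation $\gcd(pqr,k)\in\{1,p,q,r\}$, and by symmetry assume $\gcd(pqr,k)=1$ is the main case, handling $\gcd=p$ separately.

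Assume first $\gcd(pqr,k)=1$, so all three primes are coprime to $k$. Each of $\chi_p,\chi_q,\chi_r$ lies in $G_0^*$ or $G_N^*$; by Lemma~\ref{LemDivChar1} a prime order character in $G_0^*$ forces $p\mid m$, and by Lemma~\ref{LemCharPrime}(a) one in $G_N^*$ forces $p\mid(m-2)$ (and similarly for $q,r$). Since $p\mid m$ and $p\mid(m-2)$ cannot both hold for odd $p$, each prime contributes a divisor to exactly one of $m$ or $m-2$. The key point is that $\gcd(m,m-2)\mid 2$, so no odd prime can simultaneously divide $m$ and $m-2$; hence the three odd primes (I would first argue, as in Corollary~\ref{CorPq}, via $k\ge5$ and $v=pqr>mk$ that the primes must be odd and reasonably large) distribute themselves among the factors of $m$ and of $m-2$. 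Whatever the distribution, the product of the primes dividing $m$ times the product dividing $m-2$ is $pqr$, and I would combine this with the lower bound $k\ge\lambda+1$ coming from Theorem~\ref{ThmBound} and Lemma~\ref{LemCharCompare}(d),(e) (applied to the mixed characters $\chi_{pq}$ etc., to show $\lambda$ exceeds each relevant prime) to derive $mk>v$, the desired contradiction. Concretely, if say $\{p,q\}$ divide $m$ and $r$ divides $m-2$, then $m\ge pq$ and $\lambda>r$ via the pair character, giving $mk\ge pq\cdot r=v$, contradicting $v>mk$.

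The genuinely new obstacle, and the step I expect to be hardest, is the cross-analysis via the order-$pq$ type characters when the three prime characters split as (two in $G_0^*$, one in $G_N^*$) or (one in $G_0^*$, two in $G_N^*$): here I must apply Lemma~\ref{LemCharCompare} to each of $\chi_{pq},\chi_{pr},\chi_{qr}$ and verify that $\chi_{pq}=\chi_p\chi_q$ etc.\ can be arranged, then extract from parts (d) and (e) both the strong lower bounds on $\lambda$ (such as $\lambda>\max(p,q)$) \emph{and} the reinforced divisibility $m\ge q(q+2)+2$ or the structural equation $m=(px-1)(\lambda/a_q+1)+2$. The delicate bookkeeping is to ensure that in every combinatorial pattern of which characters lie in $G_0^*$ versus $G_N^*$, the resulting constraints on $m$ and $\lambda$ together force $mk\ge m\lambda>pqr=v$. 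I would organize this as a short case analysis on the number of prime characters in $G_N^*$ (which is at least one, since $G_N^*\ne\emptyset$ but a single character need not have prime order, so I must also confirm at least one prime-order character is forced into $G_N^*$, or otherwise route through a mixed character).

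Finally, the residual case $\gcd(pqr,k)=p$ (and its symmetric variants) I would dispatch using the sharpened inequality from Lemma~\ref{LemThreePrimes}, namely $\min(q,r)>p$ together with $\lambda\ge p^2$, combined with the divisibility of $m$ or $m-2$ by the two primes coprime to $k$; this again yields $m\ge$ a product of two of the primes while $k\ge\lambda+1\ge p^2+1$, and I expect the inequality $mk>v$ to follow by the same mechanism. The overall strategy thus mirrors Corollary~\ref{CorPq} faithfully, with the third prime supplying the extra divisibility needed to overshoot $v$; the main work is the careful enumeration of the $G_0^*/G_N^*$ patterns and the correct application of Lemma~\ref{LemCharCompare} in each.
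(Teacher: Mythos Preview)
Your overall strategy is right and essentially the paper's: pin down $\gcd(pqr,k)$ via Lemma~\ref{LemThreePrimes}, sort the prime-order characters into $G_0^*$ versus $G_N^*$ to obtain divisibility of $m$ or $m-2$, and then use Lemma~\ref{LemCharCompare} on mixed characters to push $\lambda$ high enough that $mk>v$. Two points, one minor and one a genuine gap.

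The minor point: the Corollary~\ref{CorPq} argument ``$k\ge5$ and $v>mk$ force the primes to be odd'' does not transfer. With three primes, $pqr>5\max(p,q,r)$ places no constraint on $\min(p,q,r)$. The paper never tries to rule out $2$ upfront; instead it deduces the needed oddness inside each branch (e.g.\ $r>\lambda\ge4$ in its Case~(1), or $r<\min(p,q)$ in its Case~(2)). Its organization also differs slightly: rather than splitting on the value of $\gcd(pqr,k)$, it simply fixes two primes (say $p,q$) coprime to $k$---guaranteed by Lemma~\ref{LemThreePrimes}---and folds the possibility that the third prime divides $k$ into the subsequent case analysis.

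The genuine gap is in your residual case $\gcd(pqr,k)=p$. You assert ``$m\ge$ a product of two of the primes'', but this only holds when $\chi_q$ and $\chi_r$ lie on the \emph{same} side. If they split, say $\chi_q\in G_0^*$ and $\chi_r\in G_N^*$, you only get $q\mid m$ and $r\mid(m-2)$, hence $m\ge\max(q,r)$; combined with $k>\lambda\ge p^2$ this yields $mk>\max(q,r)\,p^2$, which is \emph{not} $>pqr$ since Lemma~\ref{LemThreePrimes} forces $p<\min(q,r)$. This split sub-case is precisely the hard branch of the paper's Case~(2) (after relabeling). One must analyze $\chi_{qr}$: if $\chi_{qr}\in G_N^*$, Lemma~\ref{LemCharCompare}(d) gives $m\ge r(r+2)+2$ and $\lambda>q$, whence $m\lambda>r^2q>pqr$. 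If $\chi_{qr}\in G_0^*$, Lemma~\ref{LemCharCompare}(e) gives the structural formula $m=(qx-1)(\lambda/a_r+1)+2$, and one has to chain this with $r\mid(\lambda+a_r)/d$, $a_r\mid d^2$, and $q\mid(\lambda/d-a_r/d)$ to first bound $\lambda/d<2p$, then deduce $p\mid d$ (from $p^2\mid\lambda$), and finally conclude $\lambda\ge qd\ge qp$, so that $m\lambda>r\cdot qp=pqr$. This chain is the missing idea; it is not the ``same mechanism'' as the other branches.
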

\begin{proof} To show $\gcd(pqr,k) \in \{1,p,q,r\}$, we may assume without loss of generality, 
$pq| \gcd(pqr,k)$. But then $pq=\gcd(pq,k)$, which contradicts Lemma \ref{LemThreePrimes}. 
Therefore, $\gcd(pqr,k) \in \{1,p,q,r\}$. Without loss of generality, we may assume $\gcd(pq,k)=1$.

Case (1)  Both $ \chi_p, \chi_q\in G_0^*$ or  $ \chi_p, \chi_q\in G_N^*$. In that case, we would have either 
$pq|m$ or $pq|(m-2)$. In either case, $m\geq pq$. 
As $m\lambda<pqr$, it follows that $\lambda<r$ and $r\nmid \lambda$. 
By Corollary 16, $r>\lambda \geq 4$. We may then assume either $ \chi_p, \chi_q\in G_0^*$ and $ \chi_r\in G_N^*$; or
 $ \chi_p, \chi_q\in G_N^*$ and $ \chi_r\in G_0^*$. Without loss of generality, we may assume
$q>p \ge 2$. We then have either $ \chi_r\in G_0^*$ and $ \chi_q\in G_N^*$; or $ \chi_r\in G_N^*$ and $ \chi_q\in G_0^*$. In either case, we may apply Lemma \ref{LemCharCompare} (d) or (e) to deduce that $k\geq \lambda+1 \geq \max(r, q)$.
Therefore, $mk\ge pqr$.

Case (2)  Without loss of generality, we may assume $ \chi_p\in G_0^*$ and  $ \chi_q\in G_N^*$. 
If $r\nmid k$, then as
$\chi_r\in G_0^*$ or $G_N^*$, we are then back to Case (1).  
Therefore, we may assume $r|k$ and in particular, $r^2|\lambda$. 
Since $m\geq \max(p,q)$, $pqr>mk\geq \max(p,q)r^2$. That means $r< \min(p,q)$. In particular, $p>2$ and $q>2$. If $ \chi_{pq}\in G_N^*$, then by Lemma \ref{LemCharCompare} (d), $mk>m \lambda> q^2 p>pqr$. It remains to deal with the case $ \chi_{pq}\in G_0^*$.
We define $a_q$ as in Lemma \ref{LemCharCompare} and $d=\gcd(\lambda, a_q)$. As $q|(\lambda + a_q)/d$,  by Lemma \ref{LemCharPrime} (c), we have
\[ mk > [(px-1)(\frac{\lambda}{a_q}+1)] \lambda \geq (p-1)(\frac{\lambda}{d}+\frac{a_{q}}{d})\cdot \frac{d\lambda}{a_{q}}\geq (p-1)q \frac{d^2}{a_q} \frac{\lambda}{d}.\]
Recall that $a_q|d^2$ and $d|\lambda$. Hence, $\lambda/d < 2r$. But $r^2|\lambda$, so $r|d$. 
By Lemma \ref{LemCharCompare} (c), $p| d(\lambda/d-a_q/d)$. As $p\nmid \lambda$, 
$p|(\lambda/d -a_q/d)$. Since $\lambda > a_q$, $\lambda \geq pd\geq pr$. Since $m>q$, $m\lambda>pqr$, contradiction. 
\end{proof}

\section{$v=pq^2$}\label{Sec5}

In this section, we show that there is no $(v,m,k,\lambda)$-SEDF with $m>2$ when $v=pq^2$ and $p,q$ are distinct primes. Besides using character theory to study SEDFs, we need to study further restrictions on the parameters by using divisibility argument.

\begin{thm}\label{2distinct}
There exists no $(pq^2,m,k,\lambda)$-SEDF with $m > 2$.
\end{thm}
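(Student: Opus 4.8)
The plan is to combine the divisibility constraints coming from characters of $q$-power order with the borderline inequality $v>mk>m\lambda$ and the bound $\lambda<k<2\lambda$ of Theorem~\ref{ThmBound}. First I would record the basic reductions. Writing $v=pq^2=p\cdot q\cdot q$ and applying Lemma~\ref{LemThreePrimes} with the distinct primes $p,q$ and $t=q$, one gets $\gcd(pq,k)\in\{1,p,q\}$ with $q>\gcd(pq,k)$ whenever this gcd is nontrivial; since $q>q$ is absurd, this rules out $\gcd(pq,k)=q$ and also $\gcd(pq,k)=pq$, so in fact $q\nmid k$ always, while $\gcd(pq,k)\in\{1,p\}$, and $p\mid k$ forces $p<q$ (hence $q$ odd) together with $p^2\mid\lambda$ by \eqref{EqBasic}. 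Having $q\nmid k$ is exactly what is needed to invoke Lemmas~\ref{LemCharPrime}, \ref{LemCharCompare} and Corollary~\ref{LemDivChar3} for the prime $q$. The organizing observation is a dichotomy valid for $q$ odd: for any order-$q$ character $\chi$, lying in $G_0^*$ forces $q\mid m$ (Lemma~\ref{LemDivChar1}, using $q\nmid k$), while lying in $G_N^*$ forces $q\mid(m-2)$ (Lemma~\ref{LemCharPrime}(a)); since $q\nmid 2$ these are mutually exclusive, so all order-$q$ characters share one type, dictated by whether $q\mid m$ or $q\mid(m-2)$. The same dichotomy applies to $p$ whenever $p\nmid k$ and $p$ is odd. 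I would dispose of the even cases $p=2$ or $q=2$ at the outset by the parity arguments used in the proof of Theorem~\ref{ThmCyclic}, so that from here on $p,q$ may be taken odd.

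Next I would split on the group structure, which is the crux of the extra difficulty created by the repeated prime. When $G=C_{pq^2}$ is cyclic there is a character $\chi_{q^2}$ of order $q^2$, and the dichotomy propagates up the $q$-tower: if the order-$q$ characters lie in $G_0^*$ then $\chi_{q^2}\in G_0^*$ as well and Lemma~\ref{LemDivChar1} yields $q^2\mid mk$, hence $q^2\mid m$; if they lie in $G_N^*$ then $\chi_{q^2}\in G_N^*$ and Corollary~\ref{LemDivChar3} yields $q^2\mid(m-2)$, hence $m\ge q^2+2$. Combined with the lower bounds $k>\lambda\ge p^2$ (when $p\mid k$) or $\lambda\ge q-1$, $\lambda\ge p-1$ coming from Lemma~\ref{LemCharPrime}(c) applied to $\chi_q,\chi_p\in G_N^*$, this forces $mk\ge pq^2=v$, contradicting $v>mk$. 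For instance in the cleanest sub-case $p\mid k$ one has $\lambda\ge p^2$, so $m\ge q^2$ gives $mk\ge q^2(p^2+1)>pq^2$, while $q^2\mid(m-2)$ gives $mk>q^2\lambda\ge q^2p^2>pq^2$. The non-cyclic situation $G=C_p\times C_q^2$ is handled the same way when the order-$q$ characters lie in $G_0^*$: then the whole Sylow $q$-subgroup of $G^*$ lies in $G_0^*\cup\{\chi_0\}$, and the quotient argument used in Corollary~\ref{CorG0} (an element of $\Z[G]$ annihilated by all nonprincipal characters of $G/N$ is a multiple of $G/N$) gives $\pi(D)=x(G/N)$ with $|G/N|=q^2$, so again $q^2\mid mk$ and $q^2\mid m$.

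The remaining, and genuinely hardest, case is the non-cyclic group $G=C_p\times C_q^2$ with all order-$q$ characters in $G_N^*$. Here the Sylow $q$-subgroup has exponent $q$, so there is no order-$q^2$ character to upgrade $q\mid(m-2)$ to $q^2\mid(m-2)$, and the bound $m\ge q+2$ with $k\ge\max(\lambda,p^2,q-1)+1$ is just barely too weak to force $mk\ge v$. To close this gap I would bring in the $q+1$ distinct cyclic subgroups of order $q$ and the associated order-$pq$ characters $\chi_{pq}=\chi_p\chi_q$, applying Lemma~\ref{LemCharCompare}: if some $\chi_{pq}\in G_N^*$ while $\chi_p\in G_0^*$, part (d) already gives $m\ge q(q+2)+2$ and $m\lambda>q^3>pq^2=v$ (using $q>p$ in the $p\mid k$ regime). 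The stubborn configurations are those in which $\chi_p$ and every $\chi_{pq}$ avoid the convenient type, and for these one must extract extra divisibility from parts (a)--(c) of Lemma~\ref{LemCharCompare} together with Corollary~\ref{CorBasic}(c) and the arithmetic of $d=\gcd(\lambda,a_q)$, exactly the sort of ad hoc analysis flagged in Section~\ref{Sec2}.

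I expect this non-cyclic, all-$G_N^*$ configuration — where the repeated prime denies us the order-$q^2$ character and the near-equality $m\lambda\approx v$ leaves almost no slack — to be the main obstacle. The analogous split for $p\nmid k$ follows the same pattern and should be routine by comparison: there both $p,q$ are coprime to $k$, so "both $\chi_p,\chi_q\in G_0^*$" forces $pq^2\mid m$ and hence the impossible $m\ge v$; the mixed case $\chi_q\in G_0^*,\ \chi_p\in G_N^*$ closes immediately via $q^2\mid m$ and $k\ge p$ (giving $mk\ge pq^2$); and the case $\chi_q\in G_N^*,\ \chi_p\in G_0^*$ is handled by Lemma~\ref{LemCharCompare}(d),(e), which supply either $m\lambda>q^3$ or the explicit shape $m=(px-1)(\lambda/a_q+1)+2$ needed to reach $mk\ge v$.
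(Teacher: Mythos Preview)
Your outline correctly identifies the case split and singles out the genuinely hard configuration, but it stops short of a proof at exactly the two places where the real work lies.

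\medskip

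\textbf{The $p\mid k$ case.} You propose to fold this into the same character-type analysis as $\gcd(pq,k)=1$, but Lemma~\ref{LemCharCompare}(e) explicitly requires $\gcd(pq,k)=1$, so it is unavailable in the sub-case $\chi_{pq}\in G_0^*$, $\chi_p\in G_0^*$; and when $\chi_p\in G_N^*$ with $p\mid k$ you get no information on $m$ from Lemma~\ref{LemCharPrime}(a). The paper bypasses this entirely with a number-theoretic trick: writing $k=pk'$, $\lambda=p^2\lambda'$, $(m-1)(k')^2=\lambda'(pq^2-1)$, and $m=\beta q$ or $\beta q+2$, one bounds $e:=\gcd(m-1,pq^2-1)\le|p-\beta^2|$ by the Euclidean algorithm, so $(k')^2\ge(pq^2-1)/e$ and $mk\ge\beta pq\sqrt{(pq^2-1)/|p-\beta^2|}\ge pq^2$ by elementary estimates. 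Nothing like this appears in your plan.

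\medskip

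\textbf{The ``stubborn configuration''.} Your last paragraph asserts that the shape $m=(px-1)(\lambda/a_q+1)+2$ from Lemma~\ref{LemCharCompare}(e) is ``needed to reach $mk\ge v$'', but you do not show it, and in fact it does not follow directly. The paper first squeezes the parameters down to $x=1$, $(\lambda+a_q)/d=q$, $a_q=d^2$, $(p-1)/d\in\Z$, i.e.\ $m=\tfrac{(p-1)q}{d}+2$; this already takes a page of inequalities. The decisive step, which your plan does not anticipate, is a Fourier-inversion counting argument over the $q+1$ cyclic order-$q$ subgroups of $C_q^2$: one shows that every order-$q$ character $\psi$ has the same $a_\psi=d^2$, defines $A_i=\{j:|\psi_j(D_i)|^2=\lambda^2/a_q\}$, proves $|A_i|=z$ is independent of $i$ by comparing the identity coefficients of $D_iD_i^{(-1)}$, and double-counts to obtain $mz=p(q+1)$. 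Combining this with $m=\tfrac{(p-1)q}{d}+2$ forces $m\mid p\bigl(\tfrac{p-1}{d}-2\bigr)$ and hence $p>q$, contradicting $q=(\lambda+a_q)/d>(\lambda-a_q)/d\ge p$ from Lemma~\ref{LemCharCompare}(c). This double-counting over the full Sylow $q$-subgroup of $G^*$ is the missing idea; without it the proof does not close.
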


\begin{proof}
Suppose $\gcd(pq,k)\neq 1$. Then by Lemma \ref{LemThreePrimes}, we must have $\gcd(pq,k)=p$ 
and $p<q$. Note that $p^2 | \lambda$. So, we may write $k=pk'$ and $\lambda=p^2\lambda'$ for some 
integers $k', \lambda'$. We then have $(m-1)k'^2=\lambda'(pq^2-1)$. Let $e:= \gcd(m-1,pq^2-1)$. Then
\[ \frac{(m-1)}{e} k'^2 = \lambda' \frac{(pq^2-1)}{e}.\]
Consequently, $(k')^2\geq  (pq^2-1)/e$. 
Next, we need to find a bound for $e$. 
As either $ \chi_q\in G_0^*$ or $G_N^*$ and $q\nmid k$, hence 
$m = \beta q +2$ or $m = \beta q$ for some integer $\beta$. 
Thus,  
$e= \gcd(\beta q \pm 1, p - \beta^2) \leq |p - \beta^2|$. Therefore, 
$$ mk \geq \beta pq \sqrt{\frac{pq^2-1}{|p - \beta^2|}}.$$% \ge \beta pq^2,$$ contradiction.
If $p \ge \beta^2$, then $\beta pq \sqrt{\frac{pq^2-1}{p-\beta^2}} \ge \beta pq^2$. If $p < \beta^2$, then $pq \sqrt{\frac{\beta^2pq^2 - \beta^2}{\beta^2-p}} \ge pq\sqrt{q^2} = pq^2$. In any case, $mk \ge pq^2$, a contradiction.

From now on, we assume $\gcd(pq,k)=1$. As shown in the proof of Corollary 27,  $m\geq \max(p,q)$. 
Since $k\geq 5$, $pq^2>mk\geq 5 \max(p,q)$. Therefore, $q>2$. 
We first remove the case $ \chi_q\in G_0^*$. In this case, we have $q|m$ and $q\nmid k$. By Corollary \ref{CorG0}, $q^2|m$. Clearly, $p\nmid m$ and thus $ \chi_p\in G_N^*$. 
By Lemma \ref{LemCharPrime} (c), $p \leq \lambda + 1 \leq k$. Therefore, $mk \geq pq^2$. 

We may now assume $ \chi_q\in G_N^*$. As $q>2$ and $q\nmid k$, $q| (m-2)$.  By Lemma \ref{LemCharPrime} (c), $q \leq \lambda + 1 \leq k$. We claim that $ \chi_p\in G_0^*$. Otherwise, 
 $ \chi_p\in G_N^*$ and $p\mid (m-2)$ also. Then, $mk\geq pq^2$. 
We conclude now that $ \chi_q\in G_N^*$ and $ \chi_p\in G_0^*$.

It is natural to consider the character $ \chi_p \chi_q$. If $ \chi_p \chi_q \in G_N^*$, then as  $q$ is odd, by  Lemma \ref{LemCharCompare} (d), $mk > m \lambda > pq^2$, contradiction. So, we may assume $ \chi_p \chi_q \in G_0^*$.  We claim that $p\neq 2$. 
Otherwise, as $\chi_2\in G_0^*$, $2|m$ and hence $2|(m-2)$. 
But as shown earlier, $q|(m-2)$ and $q\leq k$. Thus, 
\[ mk \geq (m-2) k \geq  2q q=2q^2.\]
This finishes proving our claim. 

To simplify our notation, we denote $a_{ \chi_q}$ by $a_q$.
Let $d:=\gcd(\lambda, a_q)$. 
As $p$ is odd, by Lemma \ref{LemCharCompare} (e), %{\color{red}(need $p>2$)}, 
we conclude that there exists a positive integer $x$ such that
\[ m=(px-1)\left(\frac{\lambda}{a_q}+1\right)+2 = \frac{(px-1)d}{a_q}\left(\frac{\lambda+a_q}{d}\right)+2 .\]
By Lemma \ref{LemCharPrime} (c), $\lambda/d+a_q/d=qy$ for some integer $y$. %We claim that $\lambda/d \geq 2qy/3$. It is sufficient to show that $\lambda/d \geq 2 a_q/d$. 
By Lemma \ref{LemCharCompare} (a), $p|(\lambda -a_q)$. Since $p\nmid k$ and $p\nmid (m-1)$, $p\nmid \lambda$ and $p\nmid d$. Therefore $p|(\lambda/d-a_q/d)$. Write $(\lambda/d-a_q/d)=up$. Note that $u\geq 1$ as
$a_q\neq \lambda$. Since $(a_q/d)|(px-1)$, $\lambda/a_q \geq (x+1)/x$. 
We have thus shown $\lambda/d \geq (x+1)qy/(2x+1)$. 
We then have 
\[ pq^2 > mk >   (px-1)qy\frac{\lambda}{d}\cdot\frac{d^2}{a_q} > (px-1)q^2y^2 \frac{(x+1)d^2}{(2x+1)a_q}.\]
By Result \ref{ResGn} (d),  $d^2/a_q $ is an integer. Note that $(px-1)(x+1)/(2x+1)\geq p$ whenever $x\geq 2$. 
Hence, $x=1$ and so $(x+1)/(2x+1)=2/3$. Thus $d^2/a_q=1$ and $y=1$ as well. 
Note that  $q\nmid d$ as $q\nmid \lambda$ and therefore $(p-1)/d$ is an integer. 
To summarize, we have shown 
that 
\begin{equation}
m = \frac{p-1}{d} q + 2, \  \lambda+a_q=qd \mbox{ where } p>2, \frac{p-1}{d} \in \Z \mbox{ and }a_q=d^2.\end{equation}

\medskip

We may now apply the previous calculation on any character $\psi$ of order $q$, and we conclude that 
$a_{\psi}=d^2$ where $d$ is defined above. 

\medskip

We may also assume  $G=C_p\times C_q\times C_q$. Otherwise, it follows from 
Corollary \ref{LemDivChar3} that $q^2|(m-2)$ and thus $mk > m\lambda > pq^2$. 
Note that $G_N^*$ consists of characters of order $q$. Since $G=C_p\times C_q\times C_q$, there exists $q+1$ characters $ \psi_1, \psi_2, \ldots, \psi_{q+1}$ such that 
\[ G_N^* = \bigcup_{j=1}^{q+1} \{ \psi_j^{t} \ |\ t=1,\ldots, q-1\}.\]
As noted before, $a_{\psi_i}=a_q=d^2$ for all $i$. 

For $1 \leq i \leq m$, define 
\[ A_i:= \{1 \leq j \leq q+1: \ | \psi_j(D_i)|^2 = \frac{\lambda^2}{a_{q}} \}.\]
By applying Fourier inversion formula to compute the coefficient of identity in $D_iD_i^{(-1)}$, we see that
\[ (pq^2)k=k^2+|G_0^*| \lambda + (q-1)(q+1-|A_i|)a_q+ (q-1)|A_i|\frac{\lambda^2}{a_q}.\]
Therefore, for any $i\neq j$, 
\[ 0=(q-1)(|A_j|-|A_i|)(\frac{\lambda^2-a_q^2}{a_q}) . \]
Thus, we see that  $|A_i|=z$ for any $1 \leq i \leq m$. 
Hence, 
\[ |\{ (i,j) : |\psi_{j}(D_i)|^2=\frac{\lambda^2}{a_q}, \ 1\leq i\leq m, \ j=1,\ldots, q+1\}|=mz.\]
On the other hand, by Corollary 8 (c) and Equation (5), for each $1 \le j \le q+1$,
\[ |\{ i : |\psi_{j}(D_i)|^2=\frac{\lambda^2}{a_q}, \ i=1,\ldots, m\}|=(m-2)\frac{a_q}{\lambda+a_q}+1=p.\]
Therefore, $mz=p(q+1)$. Recall that $m=\frac{q(p-1)}{d}+2$. It is easy to see that 
\[ \gcd(m, (q+1))|  (\frac{p-1}{d} -2) . \]
As $m|[p(q+1)]$, we conclude that
$  m $ divides $ p(\frac{p-1}{d}-2)$. We claim that $\frac{p-1}{d}-2\neq 0$. 

Otherwise, in view of Equation (5), we conclude 
$m=2q+2=2(q+1)$. But since $m| p(q+1)$, we must have $p=2$,  which we have ruled out. 
Now, $(\frac{p-1}{d}-2)>0$, we then have 
\[  p(\frac{p-1}{d}-2)\geq m=\frac{q(p-1)}{d}+2. \]
Thus, $p>q$.
This is impossible as we have shown that 
$\lambda/d+a_q/d=q> (\lambda/d-a_q/d)=up\geq p$.  
\end{proof}

\section{$v$ is a prime power}\label{Sec7}

In this section, we consider the case $v=p^n$.
The only known example of nontrivial $(v,m,k,\lambda)$-SEDF is the $(243, 11, 22, 20)$-SEDF in $C_3^5$.
It will then be interesting to find out if there is any other SEDF with $v=p^n$. In this section,
we rule out the case for $n\leq 2$. When $n=3$, we derive strong restrictions on its parameters.
In view of  Theorem \ref{ThmCyclic}, we only deal with noncyclic groups.

%\medskip

\begin{lem}\label{LemPrimePower}
Let $p$ be a prime. Suppose there exists a $(p^n,m,k,\lambda)$-SEDF in $G$ with $m>2$.
\begin{itemize}
\item [(a)] $(p-1)|(k^2-k)$.
\item [(b)] If $p\nmid k$ and $p>2$, then $p|(m-2)$, $(p-1)|k$ and $G_0^*=\emptyset$.
\end{itemize}
\end{lem}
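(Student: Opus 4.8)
We have a $(p^n,m,k,\lambda)$-SEDF, so by Equation \eqref{EqBasic} we have $(m-1)k^2 = \lambda(p^n-1)$. We must prove (a) that $(p-1) \mid (k^2-k)$, and (b) that when $p \nmid k$ and $p>2$ we have $p \mid (m-2)$, $(p-1) \mid k$, and $G_0^* = \emptyset$. Let me sketch the proof.

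**Plan for part (a).**

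The plan is to exploit the orbit structure from Corollary \ref{CorDiDi}. For a group of order $p^n$, the orbit $\omega(g) = \{g^t : \gcd(t,p^n)=1\}$ of a nonidentity element $g$ has size equal to $|\langle g\rangle| \cdot (1 - 1/p)$ — that is, if $g$ has order $p^s$, then $\omega(g)$ consists of the $\phi(p^s) = p^{s-1}(p-1)$ generators of $\langle g\rangle$, so $(p-1) \mid |\omega(g)|$ for every $g \neq 1_G$. By Corollary \ref{CorDiDi}, $D_iD_i^{(-1)} = \sum_j a_j\omega(g_j)$, and the coefficient of $1_G$ in $D_iD_i^{(-1)}$ is $k$ (since each $D_i$ has $k$ elements). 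Writing $D_iD_i^{(-1)} = k\cdot 1_G + \sum_{g\neq 1_G}(\cdots)$, the total number of non-identity terms (counted with the coefficient-weighted sum of orbit sizes) must reconcile with $k^2$, which is the total coefficient sum $\chi_0(D_iD_i^{(-1)}) = k^2$. Thus $k^2 - k$ equals a $\mathbb{Z}$-linear combination of orbit sizes $|\omega(g_j)|$ with $g_j\neq 1_G$, each divisible by $(p-1)$, giving $(p-1)\mid(k^2-k)$. The one point needing care is that the coefficients $a_j$ are integers (guaranteed by Corollary \ref{CorDiDi}) and that the identity orbit $\omega(1_G)=\{1_G\}$ contributes exactly $k$, which is immediate.

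**Plan for part (b).**

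Assume $p \nmid k$ and $p>2$. Since $G_N^* \neq \emptyset$ by Result \ref{ResGn}, pick $\chi \in G_N^*$; it has some prime-power order $p^\alpha$, and Lemma \ref{LemCharPrime}(a) gives $p \mid (m-2)k$, whence $p\mid(m-2)$ since $p\nmid k$. From part (a), $(p-1)\mid k(k-1)$; the goal is to upgrade this to $(p-1)\mid k$. Here I would use $p\mid(m-2)$ together with Equation \eqref{EqBasic}: reducing $(m-1)k^2 = \lambda(p^n-1)$ modulo relevant factors of $p-1$. Specifically, $m-1 \equiv 1 \pmod{p}$ is known, but for the $(p-1)$-divisibility I expect the key is that $p^n - 1 \equiv 0 \pmod{p-1}$, so $(p-1)\mid (m-1)k^2$; combined with $\gcd(k,p)=1$ and part (a) this should force $(p-1)\mid k$ after checking that $k-1$ and $k$ cannot each absorb proper factors of $p-1$ — this will require examining $\gcd(k-1,p-1)$ versus $\gcd(k,p-1)$ carefully, and is the step I expect to be the main obstacle. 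Finally, for $G_0^*=\emptyset$: suppose $\chi\in G_0^*$ has order $p^\beta$; by Lemma \ref{LemDivChar1} we get $p^{|Y|}\mid mk$ with $|Y|\geq 1$, so $p\mid mk$, hence $p\mid m$ (as $p\nmid k$); but $p\mid(m-2)$ already, so $p\mid 2$, contradicting $p>2$. Thus $G_0^*=\emptyset$.

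**Main obstacle.**

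The delicate part is deriving $(p-1)\mid k$ in (b) from $(p-1)\mid k(k-1)$ rather than merely from a coprimality split. I would look for an additional congruence — likely by applying the orbit-counting argument of part (a) to the full sum $\sum_i D_iD_i^{(-1)}$ together with the vanishing of $G_0^*$-contributions, or by a refined divisibility using that every $\chi\in G_N^*$ of order $p$ forces $p\mid(\lambda+a_\chi)$ via Lemma \ref{LemCharPrime}(b) and hence constrains $k^2\equiv -\lambda$. The cleanest route is probably to show directly that modulo each prime power dividing $p-1$, the coprime factor of $k-1$ is forced to be trivial, using the congruence $k^2 \equiv k \pmod{p-1}$ alongside the structural constraints on $\lambda$.
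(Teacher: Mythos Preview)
Your treatment of part (a) and of the claims $p\mid(m-2)$ and $G_0^*=\emptyset$ in part (b) is correct and matches the paper's argument essentially line for line.

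The genuine gap is exactly where you flag it: the step $(p-1)\mid k$. None of the routes you propose will close it cleanly. Reducing $(m-1)k^2=\lambda(p^n-1)$ modulo $p-1$ only gives $(p-1)\mid (m-1)k^2$, and you have no control over $\gcd(m-1,p-1)$; the congruences from Lemma~\ref{LemCharPrime}(b) are modulo $p$, not modulo $p-1$, so they do not help here either. Trying to split $(p-1)\mid k(k-1)$ via prime powers of $p-1$ goes nowhere without an additional relation.

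The missing idea is to run the orbit argument of part (a) on a \emph{mixed} product $D_iD_j^{(-1)}$ with $i\neq j$. This is precisely what the hypothesis $G_0^*=\emptyset$ buys you: by Corollary~\ref{CorDiDj}, once $G_0^*=\emptyset$ the element $D_iD_j^{(-1)}$ also decomposes as an integer combination of orbits $\omega(g_t)$. Since $D_i$ and $D_j$ are disjoint, the coefficient of $1_G$ in $D_iD_j^{(-1)}$ is $0$, while the total coefficient sum is $k^2$. Hence $k^2$ itself is an integer combination of orbit sizes $|\omega(g_t)|$ with $g_t\neq 1_G$, each divisible by $p-1$, giving $(p-1)\mid k^2$. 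Combining this with $(p-1)\mid(k^2-k)$ from part (a) yields $(p-1)\mid k$ immediately. So the logical order in part (b) should be: first $p\mid(m-2)$, then $G_0^*=\emptyset$, and only then invoke Corollary~\ref{CorDiDj} to obtain $(p-1)\mid k^2$.
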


\begin{proof}
By Corollary~\ref{CorDiDi},
\[ D_iD_i^{(-1)}=k+ \sum_{j=1}^{r} a_i\omega(g_j) \]
for some $g_j$'s in $G$ and integers $a_j$'s. Clearly, $|\omega(g_j)|$ is divisible by $p-1$ and therefore,
$(p-1)|(k^2-k)$.

Now, assume that $p\nmid k$ and $p>2$. As $G_N^* \neq \emptyset$, we have $p|(m-2)$ by Corollary \ref{LemDivChar3}. In particular, $p\nmid m$ and hence by Lemma \ref{LemDivChar1}, any character of order $p$-power is in
$G_N^*$ and thus $G_0^*=\emptyset$. 
By Corollary~\ref{CorDiDj}, we conclude $(p-1)|k^2$. Since $p-1$ divides both $k^2$ and $k^2-k$, $(p-1)|k$.
\end{proof}

\medskip

We are now ready to prove the nonexistence of $(p^n,m,k,\lambda)$-SEDF with $m>2$ and $n =2$.

\begin{thm}\label{ThmP2}
Let $p$ be a prime. Then there exists no $(p^2,m,k,\lambda)$-SEDF with $m>2$.
\end{thm}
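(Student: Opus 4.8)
The plan is to use Theorem~\ref{ThmCyclic} to reduce immediately to the noncyclic group $G=C_p\times C_p$, and then to play the packing bound $mk\le v=p^2$ (the $D_i$ are disjoint $k$-subsets of a group of order $p^2$) off against the divisibility constraints that the character theory of the previous sections forces on $m$ and $k$. The point is that $G$ has exponent $p$, so every nonprincipal character has order exactly $p$; this is precisely the regime handled by Lemma~\ref{LemPrimePower}. Throughout I would use the standing bounds $m\ge 5$ and $k\ge 5$ established earlier.

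First I would dispose of the case $p\mid k$. Here \eqref{EqBasic} reads $(m-1)k^2=\lambda(p^2-1)$; since $p^2\mid k^2$ and $\gcd(p^2,p^2-1)=1$, we get $p^2\mid\lambda$, whence $\lambda\ge p^2=v$. On the other hand, Theorem~\ref{ThmBound} gives $\lambda<k$, while disjointness of the $D_i$ gives $k\le v/m<v$. Chaining these, $\lambda<k<v\le\lambda$, a contradiction. Hence I may assume $p\nmid k$ from now on.

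Next I would settle the small prime: if $p=2$ then $v=4$, incompatible with $m,k\ge5$, so $p$ is odd. With $p\nmid k$ and $p>2$, Lemma~\ref{LemPrimePower}(b) applies and yields both $p\mid(m-2)$ and $(p-1)\mid k$. Since $m>2$, the first divisibility forces $m\ge p+2$; since $k$ is a positive multiple of $p-1$, the second forces $k\ge p-1$. Combining,
\[ mk\ge(p+2)(p-1)=p^2+p-2>p^2=v, \]
which contradicts $mk\le v$ because $p>2$. This completes the argument.

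I expect the proof to be short, since the heavy lifting is already encapsulated in Lemma~\ref{LemPrimePower}: the crux is the two divisibility facts $p\mid(m-2)$ and $(p-1)\mid k$, whose derivation rests on Corollary~\ref{LemDivChar3} (which also gives $G_0^*=\emptyset$) and on the orbit decomposition of $D_iD_j^{(-1)}$ from Corollary~\ref{CorDiDj}, each nonidentity orbit in $C_p\times C_p$ having size $p-1$. Within the present theorem, the only genuine step is recognizing that these constraints, together with $m>2$, push $m$ and $k$ each just past $p$, so that their product overshoots $p^2$. The point requiring the most care is the preliminary reduction $p\nmid k$, where one must combine \eqref{EqBasic} with the lower bound $\lambda<k$ of Theorem~\ref{ThmBound} and the packing inequality $mk\le v$.
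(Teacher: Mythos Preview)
Your proof is correct and follows essentially the same route as the paper: handle $p\mid k$ by forcing $p^2\mid\lambda$ and invoking $\lambda<k<v$, then for $p\nmid k$ apply Lemma~\ref{LemPrimePower}(b) to get $m\ge p+2$ and $k\ge p-1$, whence $mk>p^2$. Two minor remarks: the preliminary reduction to $G=C_p\times C_p$ via Theorem~\ref{ThmCyclic} is harmless but unnecessary, since Lemma~\ref{LemPrimePower} is stated for arbitrary abelian $G$ of order $p^n$; and your treatment of $p=2$ via the standing bounds $m,k\ge5$ is cleaner than the paper's, which dismisses small $p$ by citing the computer search in \cite[Remark~5.17]{JL}.
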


\begin{proof}
By \cite[Remark 5.17]{JL}, we may assume $p >2$.
If $p|k$, then $p^2|\lambda$ and $k > \lambda \ge p^2$, contradiction. Suppose $p\nmid k$.
Then by Lemma \ref{LemPrimePower} (b), $(p-1)|k$ and $p|(m-2)$. So, $k \ge p-1$ and $m \geq  p+2$. But then $mk>p^2$.
\end{proof}

For the case $v=p^3$, we first show that $G$ is elementary abelian.

\begin{lem}\label{LemPandP2}
Let $p$ be a prime. Then there exists no $(p^3,m,k,\lambda)$-SEDF with $m>2$ in the group $G = C_p \times C_{p^2}$.
\end{lem}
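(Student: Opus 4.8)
The plan is to work in $G = C_p \times C_{p^2}$ and exploit the intermediate subgroup structure that distinguishes this case from the elementary abelian one. First I would dispose of the trivial subcases: if $p = 2$ we invoke \cite[Remark 5.17]{JL}, and if $p \mid k$ then $p^2 \mid \lambda$ forces $k > \lambda \geq p^2$, contradicting Theorem~\ref{ThmBound}. So I may assume $p > 2$ and $p \nmid k$. Under these hypotheses Lemma~\ref{LemPrimePower}(b) immediately gives $p \mid (m-2)$, $(p-1) \mid k$, and $G_0^* = \emptyset$; in particular every nonprincipal character lies in $G_N^*$, which means all the machinery of Corollaries~\ref{CorBasic} and \ref{LemDivChar3} applies uniformly.

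**Using characters of order $p$ versus $p^2$.** The key structural feature of $C_p \times C_{p^2}$ is that it contains characters of order $p$ \emph{and} of order $p^2$, with the order-$p^2$ characters sitting above order-$p$ ones. I would let $\chi$ be a character of order $p^2$ in $G_N^*$ and apply Lemma~\ref{LemCharPrime}(d): since $\chi^p$ also has order $p$ and lies in $G_N^*$ (as $G_0^* = \emptyset$), we get $p^2 \mid (m-2)k$, and because $p \nmid k$ this sharpens to $p^2 \mid (m-2)$. More strongly, Corollary~\ref{LemDivChar3} applied to the order-$p^2$ character gives $p^2 \mid (m-2)$ directly. Combined with $(p-1) \mid k$, so $k \geq p-1$, this already yields $m \geq p^2 + 2$ and hence $mk \geq (p-1)(p^2+2)$. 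The hope is to push this past $p^3 = v$, but a direct count shows $(p-1)(p^2+2) = p^3 - p^2 + 2p - 2$, which falls just short of $p^3$. So the crude bound is not quite enough, and this gap is where the real work lies.

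**The main obstacle and how to close it.** The hard part will be squeezing out the missing $p^2$ factor in $mk$. I expect the decisive improvement to come from a more careful analysis of $k$ rather than $m$: the bound $(p-1) \mid k$ is wasteful, and I would try to show $k$ is in fact divisible by a larger multiple, or alternatively that $m$ must exceed $p^2 + 2$. One avenue is to apply Lemma~\ref{LemPower} to the quotient $G/N$ where $G/N \cong C_{p^2}$, obtaining $p^2 \mid (\lambda^2 - a_\chi^2)$ for an order-$p$ character $\chi$ factoring through this quotient; together with Lemma~\ref{LemCharPrime}(b) that $p \mid (\lambda + a_\chi)$ and $p \nmid (\lambda - a_\chi)$, the Remark after Lemma~\ref{LemPower} upgrades this to $p^2 \mid (\lambda + a_\chi)$, forcing $\lambda \geq p^2 - a_\chi \geq p^2 - 1$ when $a_\chi$ is small. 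Then Theorem~\ref{ThmBound} gives $k > \lambda \geq p^2 - 1$, so $mk \geq (p^2+2)(p^2-1) \gg p^3$, the desired contradiction.

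**Summary of the strategy.** In short, I would: (i) reduce to $p > 2$, $p \nmid k$, so that $G_0^* = \emptyset$ and all nonprincipal characters are in $G_N^*$; (ii) use the existence of order-$p^2$ characters together with Corollary~\ref{LemDivChar3} to force $p^2 \mid (m-2)$; (iii) analyze order-$p$ characters factoring through the $C_{p^2}$ quotient via Lemma~\ref{LemPower} and its Remark to force $\lambda$ (and hence $k$) to be of size roughly $p^2$; and (iv) conclude $mk > p^3 = v$, contradicting the basic inequality $v > mk$. The subtle point to verify carefully is that the order-$p$ character used in step (iii) genuinely factors through a \emph{cyclic} quotient $G/N \cong C_{p^2}$, which is exactly what the non-elementary-abelian structure $C_p \times C_{p^2}$ provides — this is where the proof must use that $G$ is not $C_p^3$.
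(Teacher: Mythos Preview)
Your dismissal of the case $p \mid k$ is incorrect, and this is where the real difficulty lies. You write that $p \mid k$ implies $p^2 \mid \lambda$ and hence $k > \lambda \geq p^2$, ``contradicting Theorem~\ref{ThmBound}.'' But Theorem~\ref{ThmBound} only says $\lambda < k < 2\lambda$; there is nothing contradictory about $k > p^2$ when $v = p^3$. You have imported the argument from Theorem~\ref{ThmP2}, where $v = p^2$ and $k > p^2 = v$ is indeed impossible. Here $v = p^3$, and the inequalities $p^2 \mid \lambda$, $k > \lambda$, $mk < p^3$ merely force $m < p$, which is entirely consistent for large $p$.

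In the paper this case $p \,\|\, k$, $p^2 \,\|\, \lambda$, $m < p$ is in fact the substantive one. The argument first shows $p^2 \mid |\chi(D_i)|^2$ for every $i$ and every nonprincipal $\chi$: for $\chi \in G_0^*$ this is immediate since $|\chi(D_i)|^2 = \lambda$, and for $\chi \in G_N^*$ one uses Corollary~\ref{CorBasic}(c) together with $m - 2 < p$ to force $p^2 \mid \gcd(\lambda, a_\chi)$, whence $p^2 \,\|\, a_\chi$ and $p^2 \,\|\, \lambda^2/a_\chi$. Since the prime above $p$ in $\Z[\zeta_{p^2}]$ is fixed by complex conjugation, this yields $p \mid \chi(D_i)$. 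One then projects to a cyclic quotient $G/N \cong C_{p^2}$ and applies Ma's Lemma to write $\pi(D_i) = pX_i + PY_i$; feeding this into an order-$p$ character and invoking Lemma~\ref{LemPpower} produces $p \mid m$ or $p \mid (m-2)$, contradicting $m < p$. Your proposal omits all of this.

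Your treatment of the complementary case $p \nmid k$ is essentially fine and gives a mild variant of the paper's argument: instead of checking that the boundary values $m = p^2 + 2$, $k = p - 1$ force $\lambda \notin \Z$, you extract $p^2 \mid (\lambda + a_\psi)$ from Lemma~\ref{LemPower} and its Remark and use $a_\psi < \lambda$ to get $2\lambda > p^2$, which with $m \geq p^2 + 2$ already gives $mk > p^3$. (Your qualifier ``when $a_\chi$ is small'' is unnecessary and misleading: $a_\psi < \lambda$ alone suffices.) But this is the easy half of the lemma.
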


\begin{proof}
By \cite[Remark 5.17]{JL}, we may assume $p>2$. If $p\nmid k$, then by Lemma \ref{LemPrimePower} (b), $p|(m-2)$, $(p-1)|k$ and $G_0^* = \emptyset$.  By Corollary \ref{LemDivChar3}, $p^2|(m-2)$. Then $mk > p^3$, unless $m=p^2 + 2$ and $k=p-1$. But then
$\lambda = [(p-1)^2(p^2+1)]/(p^3-1) \not\in \mathbb{Z}$, contradiction.

Thus, we may assume $p\|k$ and $p^2\|\lambda$. It follows immediately that $m< p$. 
We claim that for any $1\leq i\leq m$, $p|\chi(D_i)$ for any nonprincipal $\chi\in G^*$. 

Since $\chi(D_i)\in \Z[\zeta_{p^2}]$ and prime ideals above $p$ are invariant under complex conjugation in $\Z[\zeta_{p^2}]$, it suffices to show that $p^2| \ |\chi(D_i)|^2$. If $\chi\in G_0^*$, $|\chi(D_i)|^2=\lambda$. So, we need only to consider $\chi\in G_N^*$.

By Corollary~\ref{CorBasic} (c), 
\[  \frac{\lambda}{\gcd(\lambda, a_{\chi})}< \frac{\lambda + a_{\chi}}{\gcd(\lambda, a_{\chi})}\leq m-2 <p. \]
As $p^2|\lambda$, $p^2$ must divide $\gcd(\lambda, a_{\chi})$. Hence $p^2|a_{\chi}$. 
On the other hand, as $v=p^3>\lambda>a_{\chi}$, $p^2 \| a_{\chi}$. 
Therefore, $p^2\| (\lambda^2/a_{\chi})$. We are done by applying Result 5. 

Next, we need to project $G$ to a cyclic group. There exists a subgroup $N$ and a natural projection $\pi:G \to G/N \cong C_{p^2}$. It follows that for any nonprincipal character $\psi$ on $G/N$, $p|\psi(\pi(D_i))$ for any $1\leq i\leq m$. 
By Ma's Lemma \cite[Lemma 1.5.1]{Sch}, for any $1 \leq i \leq m$, we have $\pi(D_i)= pX_i + PY_i$, where $X_i,Y_i \in \mathbb{Z}[G/N]$ and $P$ is the unique subgroup of order $p$ in $G/N$.

Let $\psi$ be a character of order $p$ in $(G/N)^*$.  Note that 
$\psi(\pi(D_i))=p \psi(X_i+Y_i)$ and $p^2\nmid k$. To get our desired result, we show $m\geq p$. 

Case (1) $\psi\in G_0^*$. Then $0=\psi(\pi(D))=\psi (p\sum_{i=1}^m (X_i+Y_i))$. 
In particular, $\psi (\sum_{i=1}^m (X_i+Y_i))=0$.  By Lemma \ref{LemPpower} (b) ,  $p| (mk/p)$.
As $p \nmid (k/p)$, $m\geq p$. 

Case (2) $\psi\in G_N^*$. Then by Lemma~\ref{LemCharBas},  there exist distinct $1 \le i,j \le m$ such that
$\psi(\pi(D-D_i-D_j))=p\psi (\sum_{u \ne i,j}(X_u+Y_u))=0$. By Lemma \ref{LemPpower} (b),  $p| [(m-2)k/p]$. Again, we get  
$p|(m-2)$ and $m>p$. 
\end{proof}

Finally, it remains  to consider $(p^3,m,k,\lambda)$-SEDF in the elementary abelian group $C_p^3$. Although we are not able to settle the nonexistence in all cases, we do eliminate those $p<3\times 10^{12}$. First, we prove the following theorem which greatly restricts the plausible parameter sets of such SEDFs.

\begin{thm}\label{ThmP3}
Let $p$ be a prime. Suppose there exists a $(p^3,m,k,\lambda)$-SEDF with $m>2$ in $G$. Then $G=C_p^3$.
Furthermore,
\begin{enumerate}
\item[(a)] $\lambda = \frac{p^2(p-1)}{3(m-1)}$ and $k = \frac{p(p-1)}{m-1} \sqrt{\frac{p^2+p+1}{3}}$. In particular, $(p-1)\nmid k$, $\frac{p^2+p+1}{3}$ is a square and $p \equiv 1 \pmod{12}$.
\item[(b)] $a=\frac{p^2(p-1)}{3(m-1)(m-3)}$, $a \mid \lambda$ and $3(m-1)(m-3) | (p-1)$.
\end{enumerate}
\end{thm}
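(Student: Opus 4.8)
The plan is to first pin down the group and then grind out the arithmetic of the character values. Since the only abelian groups of order $p^3$ are $C_{p^3}$, $C_p\times C_{p^2}$ and $C_p^3$, and the first two are excluded by Theorem \ref{ThmCyclic} and Lemma \ref{LemPandP2} respectively, we immediately obtain $G=C_p^3$. The decisive feature of this group is that every nonprincipal character has order exactly $p$, so its Galois conjugates $\chi,\chi^2,\dots,\chi^{p-1}$ span a cyclic subgroup (a ``line'') of $G^*$, and there are $N=(p^3-1)/(p-1)=p^2+p+1$ such lines. Because $\chi(D_iD_i^{(-1)})=|\chi(D_i)|^2$ is a rational integer (Results \ref{ResG0}, \ref{ResGn}) it is fixed by $\mathrm{Gal}(\mathbb{Q}(\zeta_p)/\mathbb{Q})$; hence $|\chi(D_i)|^2$ is constant on each line, and so are $a_\chi$ and $\ell_\chi$. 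I would record the two counting identities obtained by reading off the coefficient of $1_G$: for each $i$, $\sum_{\chi\neq\chi_0}|\chi(D_i)|^2=k(v-k)$, and globally $\sum_{\chi\neq\chi_0}\frac{(\lambda-a_\chi)^2}{a_\chi}=mk(v-mk)$, the latter being exactly the computation inside the proof of Theorem \ref{ThmBound}.

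Next I would settle the $p$-adic valuations. If $p^2\mid k$ then $p^4\mid\lambda$ by \eqref{EqBasic}, forcing $\lambda\geq p^4>v\geq k$, which contradicts $k>\lambda$; and $p\nmid k$ is ruled out, since by Lemma \ref{LemPrimePower}(b) it would force $G_0^*=\emptyset$, $(p-1)\mid k$ and $p\mid(m-2)$ (so $m\geq p+2$), a configuration I would eliminate via the counting identities and Theorem \ref{ThmBound}. Thus $p\,\|\,k$ and $p^2\,\|\,\lambda$, and (since $k>\lambda\geq p^2$ while $mk<v=p^3$) we get the crucial bound $m<p$. Now comes the first genuinely useful structural fact: for \emph{every} line, Corollary \ref{CorBasic}(c) gives $\frac{\lambda+a_\chi}{\gcd(\lambda,a_\chi)}\mid(m-2)$, so this quotient is $<p$; were $p\,\|\,a_\chi$ we would have $p\mid\frac{\lambda}{\gcd(\lambda,a_\chi)}$ and the quotient would be $\geq p$, a contradiction. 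Hence $p^2\,\|\,a_\chi$ on every line. Writing $k=pk_0$, $\lambda=p^2\lambda'$ and $a_\chi=p^2a_\chi''$ with everything prime to $p$, the basic equation collapses to the reduced relation $(m-1)k_0^2=\lambda'(p-1)(p^2+p+1)$.

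The heart of the proof, and what I expect to be the main obstacle, is extracting the two exact relations $\lambda=(m-3)a$ (equivalently $\ell_\chi=m-2$ on a minimal line) and $3(m-1)\lambda'=p-1$. For the first I would compare the per-line data of Corollary \ref{CorBasic} for the line achieving the global minimum $a$ with the constraints above, showing that the maximal admissible value $\ell=m-2$ is forced, whence $\lambda=(m-3)a$ and $\lambda+a=(m-2)a$. For the second relation --- the source of the mysterious factor $3$ --- I would feed the reduced data into the two counting identities, using the incidence geometry of the $p^2+p+1$ lines (each point of the associated $PG(2,p)$ lying on $p+1$ lines) to eliminate the unknown distribution of the $a_\chi''$ across lines. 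This is the delicate step, since the lines are genuinely not all equal (indeed all $a_\chi''$ equal is impossible, as it would force $p^2\mid k_0$, contradicting $p\nmid k_0$), and one must show that the only surviving solution of the resulting Diophantine system is $3(m-1)\lambda'=p-1$. Combining the two relations with \eqref{EqBasic} then gives $\lambda=\frac{p^2(p-1)}{3(m-1)}$, $a=\frac{\lambda}{m-3}=\frac{p^2(p-1)}{3(m-1)(m-3)}$ and $k=\frac{p(p-1)}{m-1}\sqrt{\frac{p^2+p+1}{3}}$.

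Finally I would read off the ``in particular'' statements from integrality and positivity. Rewriting $k=p\lambda'\sqrt{3(p^2+p+1)}$ shows that $3(p^2+p+1)$ must be a perfect square; the isolated factor $3$ forces $3\mid(p^2+p+1)$, i.e. $p\equiv1\pmod 3$, so that $(p^2+p+1)/3$ is itself a square, and reducing $3(p^2+p+1)=\square$ modulo $4$ rules out $p\equiv3\pmod 4$, giving $p\equiv1\pmod{12}$. That $a\in\mathbb{Z}$ together with $m<p$ (so $p\nmid(m-1)(m-3)$) upgrades the divisibility to $3(m-1)(m-3)\mid(p-1)$, while $a\mid\lambda$ is immediate from $\lambda=(m-3)a$. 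For $(p-1)\nmid k$ I would substitute $p-1=3(m-1)\lambda'$ into $k=3p\lambda' t$ with $t=\sqrt{(p^2+p+1)/3}$: then $(p-1)\mid k$ would force $(m-1)\mid t$, and writing $t=(m-1)u$ and $p=3(m-1)\lambda'+1$ reduces $p^2+p+1=3(m-1)^2u^2$ to $(m-1)\mid 1$, i.e. $m=2$, contradicting $m>2$.
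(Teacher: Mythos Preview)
Your setup (reducing to $G=C_p^3$, organizing $G^*\setminus\{\chi_0\}$ into $p^2+p+1$ lines on each of which $a_\chi$ is constant, and pinning down $p\,\|\,k$, $p^2\,\|\,\lambda$, $m<p$, $p^2\,\|\,a_\chi$) matches the paper and is fine. But the two steps you flag as ``the heart of the proof'' are genuine gaps, and the paper does them quite differently. First, ruling out $p\nmid k$ is not a quick consequence of ``counting identities and Theorem~\ref{ThmBound}'': the paper spends a full page on this, writing $k=(p-1)k'$, $m=\alpha p+2$, analysing $w=\gcd(p-1,3)$ and $c=\gcd(1+\alpha p,(1+p+p^2)/w)$, proving $k/\lambda>3/2$, feeding that into Theorem~\ref{ThmBound} to force $x=1$ and $d=a$, and finally bounding $m\lambda>p^3$ when $\alpha\ge 2$. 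Your sketch gives no indication of this mechanism.

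Second, and more seriously, once $p\,\|\,k$ the paper does \emph{not} extract $3(m-1)\lambda'=p-1$ from counting identities or incidence geometry of $PG(2,p)$; it never touches the distribution of the $a_\chi$ across lines at this stage. Instead it uses the orbit divisibility $(p-1)\mid(k^2-k)$ from Lemma~\ref{LemPrimePower}(a): writing $k=pk_1$, one sets $d_1=\gcd(p-1,k_1)$ and $p-1=d_1d_2$, shows $d_2\mid(k_1-1)$ hence $d_2\mid(m-1)$, and then the single inequality $m\lambda<p^3$ together with $z=\gcd(d_1,p^2+p+1)\in\{1,3\}$ forces $m-1=d_2$, $z=3$, and $\lambda=p^2d_1/3$. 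Your proposal never invokes $(p-1)\mid(k^2-k)$, which is the paper's essential lever. Finally, you have the order of (a) and (b) reversed: the paper first obtains the formulas for $k,\lambda$, then computes $k/\lambda=\sqrt{3(p^2+p+1)}/p>\sqrt{3}$ and plugs this into Theorem~\ref{ThmBound} to get $\lambda/a>(\sqrt{3}-1)m>m/2$, which combined with $m-2=\mu(\lambda/a+1)$ forces $\mu=1$ and hence $\lambda=(m-3)a$. Without the formulas for $k,\lambda$ already in hand you have no bound strong enough to force $\ell_\chi=m-2$, and you offer none.
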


\begin{proof}
The fact that $G=C_p^3$ follows from Theorem \ref{ThmCyclic} and Lemma \ref{LemPandP2}. By \cite[Remark 5.17]{JL}, we may assume $p >2$. %Since $G_N^*\neq \emptyset$, we define
%\[ a:= \min \{| \chi(D_i)|^2 \ |  \chi \in G_N^*, 1 \leq i \leq m \}.\]

First, we assume $p\nmid k$. By Lemma \ref{LemPrimePower}, $(p-1)|k$ and $p|(m-2)$.
 So, we may write $k = (p-1)k'$ and $m = \alpha p+2$ for some $\alpha, k' \in \mathbb{Z}_{>0}$.
 We may assume $\alpha \leq p$. Otherwise,
\[ mk\geq (p^2+p+2)(p-1)\geq p^3.\]

By Equation \eqref{EqBasic}, we see that $(p-1)|\lambda (p^2+p+1)$. As $\gcd(p-1,p^2+p+1) = \gcd(p-1,3)$, we have $\frac{p-1}{\gcd(p-1,3)} | \lambda$.
Let $w :=\gcd(p-1,3)$.  We may write $\lambda =  \frac{p-1}{w} \lambda'$ for some $\lambda' \in \mathbb{Z}$. Equation \eqref{EqBasic} now becomes \begin{equation}\label{EqBas1} (1 + \alpha p) (k')^2 = \lambda' \frac{1+p+p^2}{w}.\end{equation}
Let $c:= \gcd(1 + \alpha p, \frac{1+p+p^2}{w})$. By applying Euclidean algorithm, we see
that $c|(\alpha^2 - \alpha + 1)$ and we may write $\alpha^2 - \alpha + 1 = cu$ for some $u \in \mathbb{Z}_{>0}$. Equation \eqref{EqBas1} becomes \begin{equation}\label{EqBas2}\frac{1 + \alpha p}{c} (k')^2 = \lambda' \frac{1 + p + p^2}{cw}.\end{equation} And we also have,  \begin{equation}\label{EqS}\lambda' = (\frac{1 + \alpha p}{c} ) \lambda'' \end{equation} for some $\lambda'' \in \mathbb{Z}$.
Note that
\[ m \lambda =\frac{(2+\alpha p)(1 + \alpha p)(p-1)\lambda''}{cw}  .\]
Observe that
\[ (2 + \alpha p)(1 + \alpha p)(p-1)- (\alpha^2 - \alpha + 1)p^3=(\alpha-1)p^2(p-\alpha) + \alpha p (2p-3) + 2(p-1)>0 \]
if $1\leq \alpha \leq p$. As $\alpha \leq p$ and $\alpha^2 - \alpha + 1 = cu$, we thus get
\[ m \lambda  >  \frac{(\alpha^2 - \alpha + 1) p^3 \lambda''}{wc}=  \frac{u\lambda''}{w} p^3.\]

We are done if $w=1$. Thus we may assume $w=3$ and $u\lambda''=1$ or $2$.
Since $u|(\alpha^2 - \alpha + 1)$ so $u=1$. By Equations \eqref{EqBas2} and \eqref{EqS}, we get
\[ (k')^2 = \lambda'' \cdot \frac{1+p+p^2}{3c}. \]
If $\lambda''=2$, then $4|(k')^2$. Consequently, $2| (1+p+p^2)$, which is impossible. Thus, $\lambda''=1$.

Note also that $(p-1)\nmid \lambda$. Otherwise, we may then assume $w$ in Equation \eqref{EqBas1} is $1$ and derive a contradiction using the same argument as before. Hence, we have shown that
\[ \frac{p-1}{3} | \lambda, \ (p-1)\nmid \lambda,  \gcd(1 + \alpha p, \frac{1+p+p^2}{w})=\alpha^2 - \alpha + 1 \mbox{ and } \lambda' = \frac{1 + \alpha p}{c} .\]

We now have
$$\lambda = \frac{(1+\alpha p)}{(\alpha^2 - \alpha +1)} \frac{(p-1)}{3}, \qquad
k = \sqrt{ \frac{1+p+p^2}{3(\alpha^2 - \alpha +1)}} (p-1).$$
Therefore, \begin{equation}\label{EqIneq} \frac{k}{\lambda} = \sqrt{3} \sqrt{ \frac{(1+p+p^2)(\alpha^2 - \alpha +1)}{(1 + \alpha p)^2}}> \frac{3}{2}.\end{equation}
In particular, $k-\lambda > \lambda/2$.

%Recall that $a:= \min \{| \chi(D_i)|^2 \ |  \chi \in G_N^*, 1 \leq i \leq m \}$.
Let $a$ be as defined in Definition 2. Recall that $a=a_{\chi}$ for some $\chi\in G_N^*$. 
Let $d=\gcd(\lambda,a)$ and $x$ be as defined in Theorem \ref{ThmBound}, i.e. $(m-2)=x (\lambda+a)/d$.
It follows from  Lemma \ref{LemCharPrime} (c) and Corollary \ref{CorBasic} (c)  
\[  (\lambda/d+a/d)=p \beta \mbox{ and } (m-2)=p x \beta \]
for some integer $\beta$. In view of Theorem \ref{ThmBound} and $k> 3\lambda/2$, we see that $d/ax >1/2$. That implies 
$x=1$, $\beta=\alpha$ and $d=a$. Hence, $p\alpha=\lambda/a+1$ and $m-2= p\alpha$. 
Note that
\[ 0\equiv \lambda \equiv  (\lambda +a) -a \equiv p\alpha a -a \equiv \alpha a-a \bmod \frac{(p-1)}{3}.\]
If $\alpha =1$, then $(p-1)|\lambda$, which we have ruled out. Therefore, $\alpha a -a= y(p-1)/3$ where
$y\in \Z$. Since $\alpha >1$, $y>0$. Thus, we have 
\[ mk> m\lambda= (p\alpha+2)(p\alpha-1)a > p^2\alpha^2 \frac{y(p-1)}{3(\alpha -1)}.\]
When $\alpha \ge 2$, it is easy to see that the function $\alpha^2/(\alpha-1)$ is increasing as $\alpha$ increases. 
Since $\alpha\geq 2$ and $p\geq 5$ (recall that $3|(1+p+p^2)$), 
\[ mk > p^2 \frac{4(p-1)}{3} >p^3\]
We have thus shown $p|k$.

As $p|k$, we may write $k = p k_1$ and $\lambda = p^2 \lambda_1$ for some $k_1, \lambda_1 \in \mathbb{Z}_{>0}$.
 By Equation \eqref{EqBasic}, we get
\begin{equation}\label{EqBasicP3}  (m-1) k_1^2 = \lambda_1 (p^3-1).\end{equation}
As $(p-1)|(k^2-k)$ by Lemma \ref{LemPrimePower}, it follows that
$(p-1)$ divides $k_1(pk_1-1)$. Let $d_1=\gcd(p-1, k)=\gcd(p-1, k_1)$. Observe that $\gcd(d_1, pk_1-1)=1$.
Write $p-1=d_1 d_2$. As $\gcd(p-1,k)=d_1$, $d_2|(pk_1-1)$. Note that $d_2|(p-1)$. Therefore, $d_2|(k_1-1)$. It follows that $\gcd(d_1, d_2)=1$ and $\gcd(d_2,k_1)=1$.
Thus, $d_2|(m-1)$. Write $k_1=d_1k_1^{\prime}$. In view of Equation \eqref{EqBasicP3}, we get
\begin{equation}\label{EqBas0} \frac{m-1}{d_2} d_1 (k_1^{\prime})^2 = \lambda_1 (p^2+p+1). \end{equation}
Let $z=\gcd(d_1, p^2+p+1)$. Then $(d_1/z)|\lambda_1$ and
$\lambda_1=(d_1/z)t$. As $d_1|(p-1)$, $z=1$ or $3$. Write $m-1=d_2 x$.
We then have
\[ m\lambda \geq (xd_2+1)p^2\frac{td_1}{z}  \geq \frac{tx(p-1)+td_1}{z}p^2.\]
Clearly, $m\lambda\geq p^3$ if $z=1$ or $tx\geq 4$. Thus, $z=3$ and $tx\leq 3$.
As $z=3$, $3|d_1$ and $3|(p-1)$. Moreover, if $tx=3$, then $m\lambda \ge \frac{3(p-1)+3}{3}p^2=p^3$, contradiction. Thus $tx \in \{1,2\}$.

If $x=2$, then $2| [(m-1)/d_2]$. By Equation~\eqref{EqBas0}, we have that $(3\lambda_1)/d_1$ is even and thus, $t$ is even. Hence, $t \ge 2$ and $tx\geq 4$, impossible. If $x=1$ and $t=2$, then $\lambda_1=\frac{2d_1}{3}$ and we rewrite Equation~\eqref{EqBas0} as
$$
(k_1^{\prime})^2 =\frac{2(p^2+p+1)}{3}.
$$
The above equality implies $2 \| (k_1^{\prime})^2$, which is impossible. Thus, $t=1$ and $x=1$. To conclude, we
get
\begin{equation} m-1=d_2,\ \lambda=\frac{p^2 d_1}{3}, k=pd_1\sqrt{\frac{p^2+p+1}{3}}.\end{equation}
As $(p^2+p+1)/3$ is a perfect square, $p^2+p+1 \equiv 3 \bmod 4$. Therefore, $p\equiv 1 \bmod 4$ and $p\equiv 1 \bmod{12}$. Recall that $d_1d_2=p-1$. This proves (a).

For Part (b), note that $\frac{k}{\lambda}=\sqrt{\frac{3(p^2+p+1)}{p^2}}>\sqrt{3}$. By Theorem~\ref{ThmBound}, $1+\frac{\lambda}{ma}>\frac{k}{\lambda}>\sqrt{3}$. Hence, $\frac{\lambda}{a}>(\sqrt{3}-1)m>\frac{m}{2}$. By Corollary~\ref{CorBasic}, $m-2=\mu(\frac{\lambda}{a}+1)$ for some positive integer $\mu$. Hence, $\mu=1$ and $m-3=\frac{\lambda}{a}$. Consequently, $a \mid \lambda$ and $a=\frac{p^2(p-1)}{3(m-1)(m-3)}$. We claim that $\gcd(m-1,p)=\gcd(m-3,p)=1$. Otherwise, $m>p$ and $m\lambda>p^3$, contradiction. Therefore, $3(m-1)(m-3) \mid (p-1)$.
\end{proof}

As a direct application of Theorem~\ref{ThmP3}, the following result suggests that there is no $(p^3,m,k,\lambda)$-SEDF with $m>2$.

\begin{res}\label{p3}
There exists no $(p^3,m,k,\lambda)$-SEDF with $m>2$ and $p$ being a prime less than $ 3 \times 10^{12}$.
\end{res}

\begin{proof}
By Theorem \ref{ThmP3}, $\frac{p^2+p+1}{3}$ is a square. By a computer search, we find that the only primes less than $ 3 \times 10^{12}$ satisfying this property are $p=313$ and $p=2288805793$. In these cases, we have $\frac{p-1}{3} = 2^3 \times 13$ and $\frac{p-1}{3} = 2^5 \times 7 \times 13 \times 37 \times 73 \times 97$ respectively. We may then check that it is not possible to satisfy $(m-1)(m-3)|\frac{p-1}{3}$ unless $p=2288805793$ and $m \in \{5,17,29,149\}$. For these values of $p$ and $m$, letting $k=\frac{p(p-1)}{m-1} \sqrt{\frac{p^2+p+1}{3}}$, we observe that $(p-1) \nmid (k^2-k)$. This contradicts Lemma \ref{LemPrimePower}. Hence, there exists no $(p^3,m,k,\lambda)$-SEDF with $m>2$ when $p$ is a prime less than $ 3 \times 10^{12}$.
\end{proof}

Combining Corollary \ref{CorPq}, Theorems \ref{ThmCyclic}, \ref{3distinct},  \ref{2distinct},  \ref{ThmP2}, \ref{ThmP3}, and Result \ref{p3}, we thus conclude our main result Theorem \ref{ThmMain}.

\section{Concluding Remarks}\label{Sec8}

We have shown that there exists no $(v,m,k,\lambda)$-SEDF in $G$ with $m>2$ and $v$ being a product of at most three not necessarily distinct primes, except possibly when $G=C_p^3$ and $p$ is a prime greater than $3 \times 10^{12}$. When $G=C_p^3$, we derived several restrictions on the parameters. We have done some computer search for prime less than $3\times 10^{12}$ and found that no such SEDF exists.
This strongly suggests that no SEDF exists in $C_p^3$.

We believe the techniques developed in this paper will provide a basic framework for further research on the nonexistence of SEDFs. %Indeed, by using Result~\ref{ResGn}, Corollary~\ref{CorBasic}(c), and Inequality (4), we are able to eliminate a number of plausible parameter sets of $(v,m,k,\lambda)$-SEDFs recorded in \cite[Remark 5.17]{JL}.
%
%However, we choose not to present the proof here as  the proofs involve some lengthy and complicated adhoc arguments that readers might not be interested in.
%Also, we are still not able to rule out all the parameters. Perhaps, we need some new idea to handle the more general situation.
%
%
In \cite[Remark 5.17]{JL}, 70 plausible parameter sets for $(v,m,k,\lambda)$-SEDFs with $v \le 10^4$ was listed. By using Result \ref{ResGn}, Corollary \ref{CorBasic} (c), and Inequality~\eqref{EqInequ}, we are able to rule out the 44 cases listed in Table~\ref{Tb1}.
\begin{table}
\caption{Cases eliminated by Result \ref{ResGn}, Corollary \ref{CorBasic}(c), and Inequality~\eqref{EqInequ}}
\label{Tb1}
\begin{center}
\begin{tabular}{|c c c c|} 				   \hline
$v$	&	$m$	&	$k$	& $\lambda$	\\ \hline
784 & 30 & 18 & 12  \\
1089 & 35 & 24 & 18  \\
1540 & 77 & 18 & 16  \\
1701 & 35 & 30 & 18  \\
2401 & 9 & 60 & 12  \\
2401 & 9 & 120 & 48  \\
2401 & 9 & 180 & 108  \\
2401 & 16 & 120 & 90  \\
2401 & 37 & 40 & 24  \\
2401 & 65 & 30 & 24  \\
2500 & 35 & 42 & 24  \\
2500 & 52 & 42 & 36  \\
2625 & 42 & 48 & 36  \\
2784 & 116 & 22 & 20  \\
3025 & 57 & 36 & 24  \\ \hline
\end{tabular}
\begin{tabular}{|c c c c|} 				   \hline
$v$	&	$m$	&	$k$	& $\lambda$	\\ \hline
3381 & 23 & 130 & 110  \\
3888 & 47 & 52 & 32  \\
4096 & 14 & 105 & 35  \\
4096 & 14 & 210 & 140  \\
4225 & 67 & 48 & 36  \\
4375 & 7 & 162 & 36  \\
4375 & 7 & 324 & 144  \\
4375 & 7 & 486 & 324  \\
4375 & 37 & 54 & 24  \\
4375 & 37 & 81 & 54  \\
4564 & 163 & 26 & 24  \\
4625 & 37 & 68 & 36  \\
5376 & 44 & 75 & 45  \\
5776 & 78 & 60 & 48  \\
5832 & 18 & 147 & 63  \\ \hline
\end{tabular}
%
%\bigskip
%
%\medskip
%
\begin{tabular}{|c c c c|} 				   \hline
$v$	&	$m$	&	$k$	& $\lambda$	\\ \hline
5832 & 35 & 98 & 56  \\
5832 & 86 & 49 & 35  \\
5888 & 92 & 58 & 52  \\
6400 & 80 & 54 & 36  \\
6656 & 26 & 121 & 55  \\
6860 & 20 & 266 & 196  \\
6860 & 58 & 95 & 75  \\
6976 & 218 & 30 & 28  \\
8281 & 93 & 60 & 40  \\
8625 & 23 & 140 & 50  \\
9801 & 13 & 420 & 216  \\
9801 & 57 & 140 & 112  \\
9801 & 101 & 70 & 50  \\
9801 & 101 & 84 & 72  \\
 & & & \\ \hline
\end{tabular}
\end{center}
\end{table}

%\pagebreak

In case $v$ is even, there exists $\chi \in G^*$ of order 2. Note that $\chi(D_i)$ is an integer for any $1 \le i \le m$. Hence, $|\chi(D_i)|^2$ must be a square. That means either $\lambda$ or $a_{\chi}$ is a square. If $\lambda$ is not a square, we find all possible values of $a_{\chi}$ and check if $(\lambda+a_\chi)|[\gcd(\lambda,a_\chi)(m-2)]$ holds. 
Employing this idea, we have eliminated $6$ additional cases listed in Table~\ref{Tb2}.

\begin{table}
\caption{Cases eliminated when $v$ is even}
\label{Tb2}
\begin{center}
\begin{tabular}{|c c c c|} 				   \hline
$v$	&	$m$	&	$k$	& $\lambda$	\\ \hline
2376 & 11 & 190 & 152  \\
4096 & 8 & 390 & 260  \\
5832 & 8 & 595 & 425  \\
5832 & 8 & 714 & 612  \\
6656 & 26 & 242 & 220  \\
8960 & 7 & 1054 & 744  \\ \hline
\end{tabular}
\end{center}
\end{table}

%\medskip

As a consequence, there are still 20 plausible parameter sets when $v \le 10^4$, which are tabulated in Table \ref{Tb3}. At this point, we conjecture that the $(243, 11, 22,20)$-SEDF in $C_3^5$ is the only SEDF with $m>2$.

\begin{table}
\caption{Plausible parameter sets for $(v,m,k,\lambda)$-SEDFs with $m>2$ and $v \le 10^4$}
\label{Tb3}
\begin{center}
\begin{tabular}{|c c c c|} 				   \hline
$v$	&	$m$	&	$k$	& $\lambda$	\\ \hline
540 & 12 & 42 & 36  \\
1701 & 35 & 40 & 32  \\
2058 & 86 & 22 & 20  \\
2401 & 7 & 280 & 196  \\
2401 & 9 & 240 & 192  \\
2500 & 18 & 105 & 75  \\
2601 & 53 & 40 & 32  \\ \hline
\end{tabular}
\begin{tabular}{|c c c c|} 				   \hline
$v$	&	$m$	&	$k$	& $\lambda$	\\ \hline
2646 & 16 & 138 & 108  \\
3888 & 24 & 156 & 144  \\
3888 & 47 & 78 & 72  \\
3969 & 32 & 112 & 98  \\
4375 & 7 & 540 & 400  \\
4375 & 9 & 405 & 300  \\
4375 & 16 & 270 & 250  \\ \hline
\end{tabular}
\begin{tabular}{|c c c c|} 				   \hline
$v$	&	$m$	&	$k$	& $\lambda$	\\ \hline
4375 & 37 & 108 & 96  \\
5376 & 44 & 100 & 80  \\
5832 & 18 & 294 & 252  \\
8625 & 23 & 280 & 200  \\
8960 & 32 & 238 & 196  \\
9801 & 26 & 308 & 242  \\
 & & & \\ \hline
\end{tabular}
\end{center}
\end{table}

\newpage

\end{document}